\documentclass[12pt,reqno,a4paper]{amsart}
\usepackage{amssymb,amsfonts,latexsym,mathrsfs, tikz}

\usepackage{hyperref}
\usepackage[nameinlink]{cleveref}
\usepackage{mathtools}

\title[The dual of the Tits cone and dominance]{The dual of the Tits cone and dominance of Coxeter groups}

\author[X. Fu]{Xiang Fu}
\address{(Xiang Fu)\newline \indent Beijing International Center for Mathematical Research, Peking University, Beijing, China}
\email{fuxiang@math.pku.edu.cn}
\author[J. Gu]{Jiaqing Gu}
\address{(Jiaqing Gu)\newline \indent Beijing International Center for Mathematical Research, Peking University, Beijing, China}
\email{2001110007@stu.pku.edu.cn}
\author[M. Lyu]{Mengfan Lyu}
\address{(Mengfan Lyu) \newline \indent School of Computer, Data and Mathematical Sciences, Western Sydney University, Parramatta, Australia}
\email{Mengfan.Lyu@westernsydney.edu.au}
\author[L. Reeves]{Lawrence Reeves}
\address{(Lawrence Reeves) \newline \indent School of Mathematics and Statistics, University of Melbourne, Melbourne, Australia}
\email{lreeves@unimelb.edu.au}
\author[L. Xu]{Linxiao Xu}
\address{(Linxiao Xu) \newline \indent Department of Foundational Mathematics, School of Mathematics and Physics,  Xi'an Jiaotong-Liverpool University, Suzhou, China}
\email{Linxiao.Xu@xjtlu.edu.cn}
\author[S. Zhao]{Shuyang Zhao}
\address{(Shuyang Zhao) \newline \indent School of Mathematical Sciences, Peking University, Beijing, China}
\email{zsy0509@pku.edu.cn}

\newtheorem{theorem}{Theorem}[section]
\newtheorem{lemma}[theorem]{Lemma}
\newtheorem{proposition}[theorem]{Proposition}
\newtheorem{corollary}[theorem]{Corollary}

\theoremstyle{definition}
\newtheorem{definition}[theorem]{Definition}

\theoremstyle{remark}
\newtheorem{remark}[theorem]{Remark}

\numberwithin{equation}{section}

\newcommand{\Z}{\mathbb{Z}}
\newcommand{\N}{\mathbb{N}}
\newcommand{\R}{\mathbb{R}}

\DeclareMathOperator{\dom}{\, dom}
\DeclareMathOperator{\PLC}{PLC}
\DeclareMathOperator{\coeff}{coeff}
\DeclareMathOperator{\pos}{Pos}

\DeclareMathOperator{\supp}{supp}
\DeclareMathOperator{\GL}{GL}

\DeclareMathOperator{\dep}{dp}
\DeclareMathOperator{\rad}{Rad}

\DeclareMathOperator{\conv}{conv}

\DeclareMathOperator{\Ext}{E_{ext}}

\subjclass[2020]{20F55 (20F10, 20F65)}
\keywords{Coxeter groups, root systems, Tits cone, dominance, imaginary cone}

\begin{document}

\begin{abstract}
In this paper we introduce a potentially infinite process of reflections applied to a sequence of points in the dual of the Tits cone associated with an arbitrary finitely generated infinite Coxeter group. This process is infinite whenever the starting point is not in the imaginary cone of the given Coxeter group. We show that this process exhibits well-defined asymptotic behaviour. We study the resulting limits, and give a characterisation of these limits.  We then apply the techniques developed from this asymptotic process to the study of infinite sequences of roots totally ordered by the dominance partial ordering.      
\end{abstract}

\maketitle

\section{Introduction}

Let $(W,R)$ be a Coxeter system of finite rank (that is, $|R|<\infty$), with a root
system $\Phi$ arising from the Tits representation on a Coxeter datum $(V, \Pi, (\,\cdot ,\cdot\,))$  
in the sense of \cite{FU1} and \cite{FU2}. 
 When $W$ is infinite, the root system $\Phi$ is infinite, and its
large-scale geometry may be studied through the accumulation
directions of normalised roots (these are the projections of roots onto suitably chosen hyperplanes). 
These directions form the set of limit roots. Together with the imaginary cone and the dual of the
Tits cone, they provide a framework for describing
the asymptotic behaviour of normalised roots.

Another way of organising the roots is provided by a partial ordering called the  \emph{dominance}
order. Recall that a root $x$ dominates a root $y$ if every element of
$W$ that sends $x$ to a negative root also sends $y$ to a negative
root. Thus dominance is defined in terms of the inversion behaviour
of roots, whereas limit roots and the imaginary cone arise from their
asymptotic geometry. The purpose of this paper is to investigate a
closer connection between these two points of view. More precisely,
we study accumulation points of infinite dominance chains through the
geometry and dynamics of the dual Tits cone. For background on these
objects, see, for example,
\cite{BH93,BB98,Dyer12,DHR13,FU1,FU2,HLR11}.

We use the following realisation of the dual of the Tits cone:
$$
 U^*=\bigcap_{w\in W}
 w\bigl(\PLC(\Pi)\cup\{0\}\bigr).
$$
It is a closed, convex, $W$-invariant cone contained in the positive
cone $\PLC(\Pi)\cup \{0\}$. For $v\in V$, let
$$
 \pos(v):=\{x\in\Phi^+\mid(v,x)>0\}.
$$
For $\eta\in U^*$, the set $\pos(\eta)$ measures the failure of $\eta$ to lie in a chamber in which
all positive roots pair non-positively with it. Its cardinality also
distinguishes elements of the imaginary cone inside $U^*$.

We begin by establishing two basic geometric properties of $U^*$.
First, if $\eta,\eta'\in U^*$, then
$$
 (\eta,\eta')\leq0.
$$
Second, for every $\eta\in U^*$ and every $\varepsilon>0$, the set
$$
 \{x\in\Phi^+\mid(\eta,x)\geq\varepsilon\}
$$
is finite. Thus an infinite sequence of distinct positive roots
cannot pair uniformly positively with a fixed element of $U^*$.
These properties provide the finiteness needed to control sequences
of reflections and their limiting behaviour.

Our first main construction is a reduction procedure on $U^*$.
Starting with $\eta_1:=\eta\in U^*$, choose, whenever possible, a
simple root $a_i$ with
$$
 a_i\in\Pi\cap\pos(\eta_i)
$$
and set
$$
 \eta_{i+1}:=r_{a_i}\eta_i.
$$
We call the resulting sequence $(a_i)_{1\leq i\leq |\pos(\eta)|}$ a \emph{transitional sequence} and the corresponding sequence 
$(\eta_i)_{1\leq i\leq |\pos(\eta)|+1}$ a \emph{descending sequence}. When
$\pos(\eta)$ is finite, this procedure terminates after finitely many
steps. For a general element of $U^*$, however, the procedure may
continue indefinitely and should instead be regarded as an
asymptotic reduction.

Since
$r_{a_i}\eta_i=\eta_i-2(\eta_i,a_i)a_i$,
the coefficient of each simple root is non-increasing along a descending
sequence (hence the terminology). All coefficients remain non-negative because every
$\eta_i$ belongs to $U^*$. It follows that every infinite
descending sequence has a well-defined limit
$$
 \eta_{\mathbf a}:=\lim_{i\to\infty}\eta_i.
$$
We describe the support and positive set of this limit in terms of
the simple roots that occur infinitely often in the sequence. In
particular, the supports of the $\eta_i$ eventually stabilise, and
the support of $\eta_{\mathbf a}$ is a union of connected components
of the eventual support. For isotropic elements of $U^*$, the
construction is more rigid: under an appropriate connectedness
hypothesis, the limit is zero, and every simple root in the eventual
support occurs infinitely often.

It should be pointed out that the limit $\eta_{\mathbf{a}}$ depends on the specific 
transitional sequence $\mathbf{a}=(a_i)_{1\leq i\leq |\pos(\eta)|}$.  
In this paper we completely characterise when different transitional sequences may lead to the same limit 
starting from a point $\eta\in U^*$.

The rest of this paper is devoted to an application of the construction of infinite transitional
sequences and descending sequences to infinite injective sequences of positive roots totally ordered by
dominance.  
Some of the properties of the dominance ordering have been key components in establishing the (bi)automaticity for finitely generated Coxeter groups. 
In this paper, we apply the techniques developed in the study of the dual of the Tits cone to investigate infinite strict dominance chains
$$
 x_{i+1}\dom x_i,\qquad i\geq1.
$$
After normalisation, the roots $\widehat{x_i}$ lie in the compact
simplex $\conv(\Pi)$ (whenever $|\Pi|<\infty$, that is, whenever $W$ is finitely generated). 
Consequently, such a chain has accumulation
points among the limit roots. It is known that these accumulation points
also lie in the normalised dual Tits cone, see, for instance, \cite{DHR13}. 
In \cite[Section 7.7]{DHR13} the following question was asked: whether or not a normalised sequence corresponding to an infinite injective sequence of roots totally ordered by dominance (an infinite dominance chain) possesses a unique point of accumulation. In a forthcoming paper, 
we resolve this question in full. Still, in our view, it is worthwhile to see how the asymptotic action on the Tits cone may be applied to the study of infinite dominance chains. In the present paper, we present a number of useful observations on the descending chains starting from potential accumulation points of infinite dominance chains.
  
In particular, we show that the supports of the roots in a dominance
chain eventually stabilise and coincide with the support of every
accumulation point. It follows that all accumulation points of a
given chain have the same connected support. We also derive
inequalities for the bilinear form along dominance chains and use
them to compare the positive sets of their accumulation points.

One of the principal results concerns two suitably interlaced
dominance chains. We prove that accumulation points $\eta_1$ and
$\eta_2$ arising from such chains determine the same bilinear form $(\cdot, \cdot)$ pattern on
the root system:
$$
 (\eta_1,x)\diamond 0
 \quad\Longleftrightarrow\quad
 (\eta_2,x)\diamond0
 \qquad\text{for every }x\in \Phi,
$$
where $\diamond$ can assume each of the $<$, $=$ and $>$ relations, respectively.
Moreover these patterns generalise to the dual
Tits cone:
$$
 (\eta_1,\eta')\diamond 0
 \quad\Longleftrightarrow\quad
 (\eta_2,\eta')\diamond0
 \qquad\text{for every }\eta'\in U^*,
$$
where $\diamond$ can assume each of the $<$ and $=$  relations, respectively (note that 
$(u, v)\leq 0$, for all $u, v\in U^*$).

Finally, we study when unique accumulation must take place. Let
$$
 \overline Z=\conv(E)
$$
be the imaginary convex body. If a normalised dominance chain
converges to $\eta\in E$, we relate the asymptotic behaviour of
$$
 (\eta',x_i)|x_i|_1,
 \qquad \eta'\in U^*,
$$
to the convex geometry of $\overline Z$ at $\eta$. This yields
criteria involving extreme points of $\overline Z$ and, under
additional positivity hypotheses, sufficient conditions for two
interlaced dominance chains to have the same limit.

The paper is organised as follows. In the next section we recall the necessary background on based root systems, the Tits cone, the imaginary cone, limit roots, and dominance. We then establish some elementary properties of $U^*$. Section~\ref{sec:asymptotic-action}
introduces transitional sequences and descending sequences and studies their limits. Section~\ref{sec:dominance-accumulation} applies these results to accumulation points of infinite dominance chains. The final part investigates extreme points of the imaginary convex body and gives criteria for the uniqueness of limits.

\section{Preliminaries and notation}

\subsection{Standing assumptions and notation}

Throughout this paper, $(W,R)$ is a Coxeter system of finite rank. We work with a based root system $(\Phi,\Pi)$ in a finite-dimensional real vector space $V$, where
$$
\Pi=\{a_s\mid s\in R\}
$$
is assumed to be a basis of $V$ whose elements are in bijective correspondence with elements of $R$. The space $V$ is equipped with a symmetric bilinear form $(\,\cdot\,,\,\cdot\,)$ satisfying
$$
(a_s,a_s)=1
$$
for every $s\in R$, and, for distinct $s,t\in R$,
$$
(a_s, a_t)=
\begin{cases}
-\cos(\pi/m_{st}),&m_{st}<\infty,\\
\leq -1,&m_{st}=\infty.
\end{cases}
$$
Here $m_{st}$ denotes the order of $st$ in $W$. The reflection $r_{a_s}\in \GL(V)$ corresponding to $a_s\in\Pi$ acts on $V$ by
$$
r_{a_s}(v)=v-2(v,a_s)a_s.
$$
Standard Tits representation theory establishes that
$$W \to \langle r_a\mid a\in \Pi\rangle\subset \GL(V)$$ 
where $r\mapsto r_{a_r}$ for all $r\in R$ is an isomorphism (where $\langle r_a\mid a\in \Pi\rangle$ denotes the subgroup of $\GL(V)$ generated by $r_a$, for all $a\in \Pi$). 
We identify $R$ with $\{r_a\mid a\in\Pi\}$ and then $W$ acts on $V$ via $\GL(V)$. Moreover, we write
$$
\Phi=W\Pi
$$
for the associated root system. We refer to \cite{BN68, ABFB, HM} for the standard background on Coxeter groups, based root systems and \cite{Dyer12,DHR13} for background on imaginary cones, and limit roots.

For a subset $A\subseteq\Pi$, set
$$
\PLC(A):=
\left\{
 \sum_{a\in A}\lambda_a a
 \,\middle|\,
 \lambda_a\geq0\text{ $\forall a\in A$ }
 \text{ and }\lambda_a>0\text{ for at least one }a
\right\}.
$$
The sets of positive and negative roots are
$$
\Phi^+:=\Phi\cap\PLC(\Pi),
\qquad
\Phi^-:=-\Phi^+.
$$
Thus $\Phi=\Phi^+\uplus\Phi^-$ (where $\uplus$ denotes disjoint union). If
$$
v=\sum_{a\in\Pi}\lambda_a a\in V,\quad \lambda_a\in \R,\,\forall a\in \Pi,
$$
we write
$$
\coeff_a(v):=\lambda_a,
\qquad
\supp(v):=\{a\in\Pi\mid \lambda_a\neq0\},
\qquad
|v|_1:=\sum_{a\in\Pi}\lambda_a.
$$
Notice that $|\cdot|_1$ is a linear functional on $V$; it agrees with the usual $\ell^1$-norm on $\PLC(\Pi)\cup\{0\}$. In particular, $|v|_1>0$ for every $v\in\PLC(\Pi)$.

We recall the following useful result (\cite[Corollary 2.11]{HLR11}):
\begin{lemma}
\label{normbd}
When $(W, R)$ is a Coxeter system with $|R|<\infty$, for each finite $M>0$ the following set
$$\big\{x\in \Phi^+\colon |x|_1\leq M \big\}$$
is finite.
\qed
\end{lemma}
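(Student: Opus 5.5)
The plan is to reduce the lemma to a discreteness statement for root coefficients, in three steps, and then conclude by counting.

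\textbf{Step 1: positive roots have coefficients bounded below.} Since $|R|<\infty$, only finitely many values $(a_s,a_t)$ occur. Among them, the off-diagonal entries with $m_{st}<\infty$ are of the form $-\cos(\pi/m_{st})$, and the entries with $m_{st}=\infty$ are at most $-1$. I would show there is a constant $c>0$ such that every nonzero coefficient of every positive root is at least $c$.

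\textbf{Step 2: how the coefficients are generated.} A positive root is built from a simple root by successive simple reflections, and $r_a$ changes only the $a$-coefficient, by adding $-2(x,a)$. So all coefficients of roots are polynomial expressions in the finitely many numbers $2\cos(\pi/m_{st})$ and $-2(a_s,a_t)$. This alone does not give discreteness, so I would instead argue by induction on depth.

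\textbf{Step 3: the induction on depth.} For $x\in\Phi^+\setminus\Pi$ there is $a\in\Pi$ with $(x,a)>0$ and $r_ax$ positive of smaller depth. Then
\[
\coeff_a(x)=\coeff_a(r_ax)+2(x,a),
\]
so the $a$-coefficient increases. The real obstacle is keeping a uniform lower bound; the natural route is the dihedral reduction. Let $y$ be the minimal root of the orbit $W_{\{s,t\}}x$. Then within that rank-two parabolic, $x$ is a nonnegative combination of two dihedral roots with coefficients of the form $\sin(k\pi/m)/\sin(\pi/m)\ge 1$, or at least $1$ in the infinite case. This is the standard fact that every positive root is a nonnegative combination in which new coefficients are at least $1$ once they appear.

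\textbf{Conclusion.} With $c=1$, accepting the well-known fact that nonzero coefficients of roots are at least $1$ (or some fixed $c$), the set $\{x\in\Phi^+ : |x|_1\le M\}$ lies in a bounded region. Boundedness alone does not give finiteness, so I would use a different argument: depth. Roots with $|x|_1\le M$ have bounded depth, because along a depth-decreasing chain $x=x_k, x_{k-1},\dots,x_1\in\Pi$ the quantity $|x_j|_1$ strictly increases by $2(x_j,a)$. When $(x_j,a)\ge\delta$ for a uniform $\delta>0$, this forces $k\le M/\delta$. Since there are only finitely many roots of each depth (each is $w a$ with $\ell(w)$ bounded), the lemma follows.

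The main obstacle is the uniform $\delta$. Steps where $(x_j,a)$ is small can occur in finite dihedral subsystems. I would handle this by grouping consecutive steps inside a rank-two parabolic: each maximal dihedral run increases $|\cdot|_1$ by a positive amount taken from a finite set of values (finitely many $m_{st}$), giving a uniform $\delta$ per run. Alternating runs would then be shown to be bounded in number, via Brink--Howlett type arguments. Since the paper cites \cite[Corollary 2.11]{HLR11}, I would ultimately defer to that argument.
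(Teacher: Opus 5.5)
Your proposal has a genuine gap: it never proves the lemma. The step that carries the whole argument is a uniform lower bound $\delta>0$ on the increment $2(x_j,a)$ along a depth-decreasing chain. You name it as ``the main obstacle'', leave it open, and defer to the reference. (The paper also only cites that reference, but your Steps~1--3 and Conclusion are offered as an argument, and they do not close.)

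The repair you sketch does not work as written. Over a maximal run inside a rank-two parabolic $W_{\{a,b\}}$ starting from a root $y$, the increments are $-2(y,a)$, $-2(r_ay,b)$, and so on. These are determined by the pair $\bigl((y,a),(y,b)\bigr)$, not by $m_{ab}$. So the finiteness of the set of $m_{st}$ gives you no finite set of increment values and no $\delta$. Moreover, ``bounding the number of runs'' is equivalent to bounding the depth, which is the conclusion you want, so that step is circular. Step~1 is true: nonzero coefficients of positive roots are $\geq 1$, by induction on depth, since $(a,b)\leq -\frac12$ for adjacent simple roots. But, as you note yourself, it only gives boundedness, and it is not needed.

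The missing idea is already in the paper: Krammer's finiteness, Proposition~\ref{lowbd}. Since $x$ and $a$ are both roots, $(x,a)>0$ forces $(x,a)\geq L$. With that bound your depth argument closes at once.

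For $x\in\Phi^+\setminus\Pi$ there is $a\in\Pi$ with $(x,a)>0$, since otherwise $1=(x,x)=\sum_{a\in\Pi}\coeff_a(x)(x,a)\leq 0$. Then $r_ax\in\Phi^+$, $\dep(r_ax)=\dep(x)-1$ by Lemma~\ref{depthchange}, and $|x|_1=|r_ax|_1+2(x,a)\geq |r_ax|_1+2L$.

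Iterating down to a simple root gives $|x|_1\geq 1+2L(\dep(x)-1)$, and $x=wb$ with $b\in\Pi$ and $\ell(w)\leq \dep(x)-1$. Hence $|x|_1\leq M$ places $x$ in $\{wb\mid b\in\Pi,\ \ell(w)\leq (M-1)/(2L)\}$. This set is finite because $|R|<\infty$.

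There is no circularity here. Krammer's result rests on finite subgroups of $W$ lying in conjugates of finite standard parabolic subgroups, not on any norm estimate for roots.
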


The Coxeter graph will be regarded as a graph with vertex set $\Pi$, in which distinct vertices $a,b\in\Pi$ are adjacent when $(a,b)\neq0$.
Accordingly, a subset of $\Pi$ is called connected if the corresponding induced subgraph is connected.

For $v\in V$, define
$$
\pos(v):=\{x\in\Phi^+\mid (v,x)>0\}.
$$
We also write
$$
Q:=\{v\in V\mid (v,v)=0\}
$$
for the isotropic cone.

For $w\in W$, its inversion set is
$$
N(w):=\{x\in\Phi^+\mid wx\in\Phi^-\}.
$$
The depth of a positive root $x$ is
$$
\dep(x):=\min\{\ell(w)\mid w\in W,\;wx\in\Phi^-\}.
$$
We recall the following useful result on depth (\cite[Lemma 1.7]{BH93}):
\begin{lemma}
\label{depthchange}
For $x\in \Phi^+$ and $a\in \Pi$ with $a\neq x$ we have 
\begin{equation*}
\dep(r_a x) =
\begin{cases} \dep(x)-1 & \text{if } (x, a) > 0,\\
\dep(x)  & \text{if } (x, a) = 0,\\
\dep(x)+1 & \text{if } (x, a) < 0.
\end{cases}
\end{equation*}
\qed

\end{lemma}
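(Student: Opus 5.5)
The plan is to use the standard description of depth via reduced expressions and the exchange condition. The statement is cited from Brink--Howlett, but it can be proved directly. Write $d=\dep(x)$ and fix $a\in\Pi$ with $a\neq x$. Since $x\neq a$ and $x\in\Phi^+$, the root $r_a x$ is again positive: the reflection $r_a$ only permutes $\Phi^+\setminus\{a\}$. Moreover
\[
r_a x = x-2(x,a)a,
\]
so $\coeff_a(r_a x)$ is smaller than, equal to, or larger than $\coeff_a(x)$ according as $(x,a)>0$, $=0$, or $<0$. Because $r_a$ is an involution, swapping the roles of $x$ and $r_a x$ exchanges the first and third cases, since $(r_a x,a)=-(x,a)$. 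The case $(x,a)=0$ is immediate: then $r_a x=x$, so the depths agree. It therefore suffices to prove one case, say $(x,a)<0$, and show $\dep(r_a x)=d+1$.

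\textbf{Two-sided bound.} From the definition,
\[
\dep(r_a x)\le \dep(x)+1 \quad\text{and}\quad \dep(x)\le\dep(r_a x)+1.
\]
Indeed, if $w$ sends $x$ to a negative root, then $wr_a$ sends $r_a x$ to a negative root, and $\ell(wr_a)\le\ell(w)+1$. So it is enough to rule out $\dep(r_a x)\le d$ when $(x,a)<0$.

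\textbf{Main obstacle: the lower bound.} I would first reformulate depth as the minimal number of simple reflections needed to bring $x$ to a simple root, plus one. That is,
\[
\dep(x)=\min\{k+1 : x=s_1\cdots s_k b,\ b\in\Pi\}.
\]
This holds because $wx<0$ with $w$ of minimal length forces the last step to be a simple reflection killing a simple root. Equivalently, $\dep(x)-1$ is the minimal length of a path of simple reflections from $x$ down to $\Pi$. I would then prove, by induction on $\dep$, the following claim: any root $y$ at distance $k$ from $\Pi$ admits a geodesic path in which every step strictly lowers the height $|y|_1$. In other words, each step is of the form $y\mapsto r_b y$ with $(y,b)>0$.

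The inductive step is the hard part. Given $y$ and $b$ with $(y,b)>0$ and $r_b y$ at distance $k-1$ from $\Pi$, suppose a descending path from $y$ begins instead with some $c$ satisfying $(y,c)>0$. One must compare the two paths inside the rank-two parabolic subgroup $\langle r_b,r_c\rangle$. There, the dihedral-group analysis (roots in $\mathrm{span}\{b,c\}$, using $(b,c)=-\cos(\pi/m)$ or $\le -1$) shows that the orbit of $y$ under $\langle r_b,r_c\rangle$ has a unique lowest element. Both descents reach it in the expected numbers of steps, which forces depth to drop by exactly one at each descent.

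\textbf{Conclusion.} Granting the claim: if $(x,a)<0$, then $x=r_a(r_a x)$ with $(r_a x,a)>0$, so $r_a x$ descends to $x$ in one height-lowering step. By the claim, $\dep(r_a x)=\dep(x)+1$, which is the required case.
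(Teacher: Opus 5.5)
\textbf{There is a genuine gap, at the step that carries all the content.} Your symmetry reduction and the bound $|\dep(r_ax)-\dep(x)|\le 1$ are fine.

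The claim you propose to prove by induction is that every root admits \emph{some} geodesic path to $\Pi$ all of whose steps lower $|\cdot|_1$. That claim does not yield your conclusion. Knowing that $r_ax$ has some height-lowering geodesic says nothing about whether the particular height-lowering step $r_ax\mapsto x$ lies on a geodesic. What your final paragraph actually uses is the statement ``every step $y\mapsto r_cy$ with $(y,c)>0$, $y\neq c$, lowers the depth by exactly one''. That is precisely the first case of the lemma, so the reduction is circular. Everything therefore rests on the inductive step, which is only gestured at: ``both descents reach it in the expected numbers of steps'' is the assertion to be proved, not an argument for it.

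The rank-two comparison can be pushed through, but it needs ingredients that are absent from the proposal:
\begin{itemize}
\item You must check that the first step $b$ of a geodesic from $y$ satisfies $(y,b)>0$; this itself uses the induction hypothesis applied to $r_by$.
\item You must show that $(y,b)>0$ and $(y,c)>0$ force $\langle r_b,r_c\rangle$ to be finite and $y=w_0y_0$. Both alternating words of length $m_{bc}$ must then be height-lowering at every step. The case $y\in\R b\oplus\R c$ has to be treated separately.
\item The induction hypothesis may be applied only along the $b$-path, where depths are already known to be at most $\dep(y)-1$. This gives $\dep(y_0)=\dep(y)-m_{bc}$.
\item The bound $\dep(r_cy)\le\dep(y_0)+m_{bc}-1$ must then come from the one-step estimate. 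You cannot apply the induction hypothesis to $r_cy$, since its depth is not yet known.
\end{itemize}

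\textbf{The missing idea is one you already wrote down and then did not use.} Suppose $wx\in\Phi^-$ with $\ell(w)=\dep(x)$, and $w=r_bw'$ with $b\in\Pi$ and $\ell(w')=\ell(w)-1$. Then $w'x\in\Phi^+$ and $r_b(w'x)\in\Phi^-$, so $w'x=b$. Thus $x=(w')^{-1}b$ is determined by $w$.

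This gives a direct proof. Let $(x,a)>0$ and take such a $w$.
\begin{itemize}
\item If $wa\in\Phi^-$, then $\ell(wr_a)=\ell(w)-1$ and $(wr_a)(r_ax)=wx\in\Phi^-$. Hence $\dep(r_ax)\le\dep(x)-1$.
\item If $wa\in\Phi^+$, then $w(r_ax)=wx-2(x,a)\,wa\in\Phi^-$, so $\dep(r_ax)\le\dep(x)$. Equality is impossible: $w$ would then have minimal length for $r_ax$ as well, giving $w'(r_ax)=b=w'x$, i.e.\ $r_ax=x$, which contradicts $(x,a)\neq0$.
\end{itemize}
Together with your two-sided bound, this gives $\dep(r_ax)=\dep(x)-1$. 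The case $(x,a)<0$ then follows from your symmetry remark, with no induction or dihedral analysis needed.
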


For roots $x,y\in\Phi$, we say that $x$ \emph{dominates} $y$, and write $x\dom y$, if
$$
wx\in\Phi^-\quad\Longrightarrow\quad wy\in\Phi^-
\qquad\text{for every }w\in W.
$$
The dominance is called strict if, in addition, $x\neq y$. For
$x\in\Phi^+$, we write
$$
D(x):=\{y\in\Phi^+\mid x\dom y\}.
$$

We call a positive root elementary if it strictly dominates no positive
root, and write
$$
D_0:=\{x\in\Phi^+\mid D(x)=\{x\}\}.
$$
In particular, $\Pi\subseteq D_0$.

We recall the following useful and well-known results on dominance (\cite[Lemma 2.3]{BH93} and \cite[Lemma 3.2]{FU1}):

\begin{lemma}
\label{domandform}
Suppose that $x, y\in \Phi$. Then there is dominance between $x$ and $y$ if and only if
$$(x, y)\geq 1.$$
\qed
\end{lemma}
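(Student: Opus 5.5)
The plan is to reduce to the rank-two reflection subgroup $W'=\langle r_x,r_y\rangle$ and its root subsystem $\Phi'=W'\{x,y\}\subseteq \Phi\cap \R x+\R y$. Replacing $x$ or $y$ by its negative changes both the sign of $(x,y)$ and the direction of dominance. So after fixing signs I will assume $x,y\in\Phi^+$, and I will treat $(x,y)\geq1$ and $(x,y)\le -1$ in parallel. Throughout I use Dyer's theorem that a reflection subgroup of $W$ is a Coxeter group with a canonical simple system $\Pi'\subseteq\Phi^+$. For every $w\in W$, the set $\{z\in\Phi'^+ : wz\in\Phi^-\}$ is then a ``biclosed'' subset of $\Phi'^+$ for the dihedral system $(\Phi',\Pi')$. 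In rank two this means it is an initial segment of the natural linear order of $\Phi'^+$ from one simple root to the other (possibly infinite, possibly from both ends in the finite case, and complemented appropriately).

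\emph{Step 1: $(x,y)\ge1$ implies dominance.} Let $c=(x,y)\geq 1$ and $x\neq y$. The Gram matrix of $x,y$ then has nonpositive determinant $1-c^2$, so $W'$ is infinite dihedral. The roots of $\Phi'$ lie on the two branches of the conic $\{v\in \R x+\R y:(v,v)=1\}$, or on two parallel lines when $c=1$. The positive roots $\Phi'^+$ are the points $\lambda x+\mu y$ with $\lambda,\mu$ of a fixed combinatorial type relative to the canonical simple roots $\alpha,\beta$ (where $(\alpha,\beta)\le -1$). They form two sequences, one converging in direction to each isotropic ray, and $x,y$ lie in the same sequence because $(x,y)>0$. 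Along one sequence the roots are linearly ordered as $\alpha, r_\alpha\beta, r_\alpha r_\beta\alpha,\dots$. An initial segment containing a later root contains every earlier one. So $x\dom y$ or $y\dom x$, according to which of the two comes first in that sequence. If instead $x\in\Phi^+$, $y\in\Phi^-$ with $(x,y)\ge 1$, I apply this to $x,-y$ with $(x,-y)\le -1$; then $x$ and $-y$ lie in opposite sequences and no dominance between them can occur. I would check that this matches the convention in the statement, since the lemma only asserts ``there is dominance between $x$ and $y$''.

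\emph{Step 2: dominance implies $(x,y)\ge 1$.} Suppose $|(x,y)|<1$, or $(x,y)\le -1$ with $x,y$ positive. In the first case $W'$ is finite dihedral. I exhibit $w\in W'$ with $wx\in\Phi^-$ and $wy\in\Phi^+$, and conversely, by picking the element that realises the appropriate initial segment of $\Phi'^+$ starting at the correct end; the order on $\Phi'^+$ is cyclic and both ends are available. Using the inversion-set description above, this segment is realised by some $w\in W'\le W$. In the second case $x$ and $y$ lie in opposite branches, and the initial segments from either end separate them. In both cases neither root dominates the other.

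The main obstacle is the transfer from $W'$ to $W$, namely that the inversion set of any $w\in W$ meets $\Phi'^+$ in an initial segment of the dihedral order. For $w\in W'$ this is immediate. For general $w$ I would argue via the length function: $N(w)\cap\Phi'$ is closed under nonnegative combinations lying in $\Phi'$, and the same holds for its complement in $\Phi'^+$. This is a direct consequence of $N(w)=\{z\in\Phi^+: wz\in\PLC(-\Pi)\}$ and convexity of $\PLC(\Pi)$. In the infinite dihedral case, biclosed subsets of $\Phi'^+$ are exactly unions of initial segments taken from the two ends. This yields the required linear-order argument.
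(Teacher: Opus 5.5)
The paper gives no proof of this lemma; it only quotes \cite[Lemma 2.3]{BH93} and \cite[Lemma 3.2]{FU1}. Your reduction to the dihedral reflection subgroup $W'$ therefore has to stand on its own. It is a sound strategy, but the step you call the main obstacle is not resolved by what you propose.

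\textbf{Biclosedness is not enough.} In the infinite dihedral case, $N(w)\cap\Phi'^{+}$ being biclosed does not give your linear-order argument, because some infinite biclosed sets defeat it. Take canonical simple roots $\alpha,\beta$ with $(\alpha,\beta)=-1$, and put $\delta=\alpha+\beta$. The set $T=\{\beta+k\delta\mid k\geq 0\}\cup\{\alpha+k\delta\mid k\geq 1\}$ is biclosed: every element has positive $\beta$-coefficient, and its complement is $\{\alpha\}$. It is an initial segment from the $\beta$-end. It contains $y=\alpha+\delta$ but not $x=\alpha$. The biclosed set $\{\alpha\}$ contains $x$ but not $y$. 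So biclosedness yields neither $x\dom y$ nor $y\dom x$, although $(x,y)=1$ and in fact $y\dom x$.

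Your sentence ``an initial segment containing a later root contains every earlier one'' holds only for segments taken from the same end. Your parenthetical ``possibly infinite'' admits exactly the sets that break it. The missing ingredient is finiteness: $|N(w)|=\ell(w)<\infty$. By your own classification, the finite biclosed subsets of $\Phi'^{+}$ are precisely the finite initial segments of a single branch. Alternatively, use Dyer's coset factorisation $w=uw'$ with $w'\in W'$ and $u(\Phi'^{+})\subseteq\Phi^+$, which gives $N(w)\cap\Phi'^{+}=N_{W'}(w')$ directly. Once this is in place, your Steps~1 and~2 for two positive roots go through.

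\textbf{The sign reduction is wrong.} Negating only one of $x,y$ does not reverse dominance. The valid symmetry is $x\dom y\iff -y\dom -x$. For example, $x\dom x$, yet there is no dominance between $x$ and $-x$. So the mixed case $x\in\Phi^+$, $y\in\Phi^-$ is genuinely separate, and in Step~1 you prove the wrong thing there. ``No dominance between $x$ and $-y$'' is irrelevant: the lemma asserts dominance between $x$ and $y$, and this does hold. For instance, $\alpha\dom -\beta$ for simple roots with $(\alpha,\beta)=-1$, since no $w$ has both $r_\alpha$ and $r_\beta$ as right descents.

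The correct reformulation is as follows. $y\dom x$ is impossible (take $w=1$), and $x\dom y$ holds if and only if no $w\in W$ inverts both $x$ and $-y$.
\begin{itemize}
\item If $(x,-y)\leq -1$, then $x$ and $-y$ lie in opposite branches, so by the finite single-branch description no $w$ inverts both; hence $x\dom y$.
\item If $(x,-y)>-1$ and $|(x,-y)|<1$, the longest element of the finite group $W'$ inverts both.
\item If $(x,-y)\geq 1$, take $r_z$, where $z$ is the dominating root of the pair $\{x,-y\}$; it inverts both.
\end{itemize}
Your Step~2 does not address this converse either. Pairs of negative roots then reduce to the positive case via $x\dom y\iff -y\dom -x$ and $(x,y)=(-y,-x)$.
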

Next, we have an easy observation on dominance which may be helpful later on.
\begin{lemma}
\label{dfin}
For each $x\in \Phi^+$, 
$$D(x)\subseteq N(w),$$
whenever $wx\in \Phi^-$, and, in particular, $|D(x)|<\infty$.
\end{lemma}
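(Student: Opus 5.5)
The claim follows directly from the definition of dominance, plus the standard fact that inversion sets are finite.

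Fix $x\in\Phi^+$ and an element $w\in W$ with $wx\in\Phi^-$. Such a $w$ exists: for instance $w=r_x$ sends $x$ to $-x\in\Phi^-$. Alternatively, by the definition of $\dep(x)$, some $w$ of length $\dep(x)$ sends $x$ to a negative root.

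Now take any $y\in D(x)$. By definition $y\in\Phi^+$ and $x\dom y$. Applying the defining implication of dominance to this particular $w$, the fact that $wx\in\Phi^-$ forces $wy\in\Phi^-$. Since $y$ is also positive, this says exactly that $y\in N(w)$. Hence $D(x)\subseteq N(w)$.

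For the finiteness assertion, I will use the standard fact from the theory of based root systems (see \cite{BN68,HM}) that $|N(w)|=\ell(w)<\infty$ for every $w\in W$. Combined with the inclusion just established, this gives $|D(x)|\le \ell(w)<\infty$. Choosing $w$ of minimal length gives the sharper bound $|D(x)|\le\dep(x)$, although that refinement is not needed.

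The only step that needs care is the existence of some $w$ with $wx\in\Phi^-$, and it is settled by the reflection $r_x\in W$. So I do not expect any real obstacle: the argument is just unwinding the definitions.
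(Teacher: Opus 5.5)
Your proof is correct and follows the same argument as the paper: apply the defining implication of dominance to a fixed $w$ with $wx\in\Phi^-$ to get $D(x)\subseteq N(w)$. You also make explicit two points the paper leaves implicit, namely that such a $w$ exists (e.g.\ $r_x$) and that $N(w)$ is finite because $|N(w)|=\ell(w)$.
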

\begin{proof}
By the definition of dominance, a positive root $y\in D(x)$ if and only if $wy\in \Phi^-$ whenever $wx\in \Phi^-$,
whence $D(x)\subseteq N(w)$ whenever $wx\in \Phi^-$.
\end{proof}

\subsection{Affine and non-affine dihedral reflection subgroups}
It is well known that $W':=\langle r_x, r_y\rangle$ (where $x, y\in \Phi$ and $x\neq \pm y$), the \emph{reflection subgroup} generated by a pair of distinct reflections $r_x$ and $r_y$, is isomorphic to the dihedral Coxeter group. It is also well known that 
$$|W'|<\infty \iff -1<(x, y)<1.$$
While when $|W'|=\infty$, we distinguish two types of infinite dihedral reflection subgroups, namely, the affine dihedral reflection subgroups and infinite non-affine dihedral reflection subgroups. Specifically, 
$$\text{$W'$ is affine dihedral iff $(x, y)=\pm 1$};$$
whereas 
$$\text{$W'$ is infinite non-affine dihedral iff $|(x, y)|>1$}.$$
Both affine dihedral reflection subgroup and infinite non-affine dihedral reflection subgroup  are isomorphic to the abstract affine dihedral Coxeter group. 
For general references on reflection subgroups of Coxeter groups (these are subgroups of Coxeter groups which are generated by conjugates of the Coxeter generators) we refer the readers to consult classical literatures, including \cite{VD82}, \cite{VD89}, \cite{MD87} and \cite{MD90}. For how to locate affine reflection subgroup within a Coxeter group, we refer the readers to consult \cite{FRX}. 

\subsection{Normalisation and limit roots}

Let
$$
V_1:=\{v\in V\colon |v|_1=1\}.
$$
Whenever $|v|_1\neq0$, define the normalisation of $v$ by
$$
\widehat v:=\frac{v}{|v|_1}\in V_1.
$$
In particular,
$$
\widehat{\Phi}:=\{\widehat x\mid x\in\Phi\}
               =\{\widehat x\mid x\in\Phi^+\}
               \subseteq\conv(\Pi).
$$
The set of \emph{limit roots}, denoted by $E$, is the set of accumulation points of $\widehat{\Phi}$ in $V_1$. Since $\Pi$ is finite, $\conv(\Pi)$ is compact (being closed and bounded in a finite dimensional vector space over the real numbers), and hence every infinite injective sequence of positive roots has a normalised subsequence converging to an element of $E$.
Moreover,
$$
E\subseteq Q\cap V_1.
$$

For $w\in W$ and $v\in V_1$ such that $|wv|_1\neq0$, define the
normalised action by
$$
w\cdot v:=\widehat{wv}=\frac{wv}{|wv|_1}.
$$
In particular, this action is well-defined on the normalised dual Tits cone introduced below.

\subsection{The dual Tits cone and the imaginary cone}

We use the following realisation of the dual of the Tits cone:
$$
U^*:=
\bigcap_{w\in W}
w\bigl(\PLC(\Pi)\cup\{0\}\bigr).
$$
It follows directly from this description that $U^*$ is a closed,
convex, $W$-invariant cone contained in
$\PLC(\Pi)\cup\{0\}$. We write
$$
\widehat{U^*}:=U^*\cap V_1.
$$
Since $W$ preserves $U^*$, the normalised action of $W$
$$
w\cdot\eta=\widehat{w\eta}
$$
is well-defined for every $w\in W$ and $\eta\in\widehat{U^*}$.

The fundamental domain of the  imaginary cone is
$$
\mathscr K:=
\left\{
 \eta\in\PLC(\Pi)\cup\{0\}
 \,\middle|\,
 (\eta,a)\leq0\text{ for every }a\in\Pi
\right\},
$$
and the imaginary cone is
$$
\mathscr Z:=\bigcup_{w\in W}w\mathscr K.
$$
We have
$$
\mathscr Z\subseteq U^*.
$$
A standard characterisation that will be used below is
$$
\eta\in\mathscr Z
\quad\Longleftrightarrow\quad
|\pos(\eta)|<\infty
\qquad(\eta\in U^*);
$$
see, for example, \cite[Proposition 4.22]{FU2}, \cite{Dyer12} and \cite{DHR13}.

Set
$$
Z:=\mathscr Z\cap V_1
$$
and define the \emph{imaginary convex body} by
$$
\overline Z:=\operatorname{cl}_{V_1}(Z)
            =\overline{\mathscr Z}^{\,V}\cap V_1,
$$
where the first closure is taken in $V_1$ and the second in $V$.
The basic relations among the objects introduced above are
$$
E\subseteq\overline Z
 \subseteq\widehat{U^*}
 \subseteq\conv(\Pi).
$$
Moreover, by \cite[Theorem~2.2]{DHR13},
$$
\overline Z=\conv(E).
$$
In particular, $\overline Z$ is compact, convex and invariant under the normalised action of $W$.

The following result links dominance and the dual of the Tits cone (\cite[Proposition 4.10]{FU2}):
\begin{proposition}
\label{domU}
For $x, y\in \Phi$, 
$$x\dom y \iff x-y\in U^*.$$
\qed
\end{proposition}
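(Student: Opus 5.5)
The reverse implication is a short sign argument; the forward implication needs an argument inside a dihedral root subsystem.

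\emph{($\Leftarrow$).} Suppose $x-y\in U^*$ and let $w\in W$ with $wx\in\Phi^-$. If $wy\in\Phi^+$, then $w(x-y)=wx-wy$ is minus a sum of two elements of $\PLC(\Pi)$. Hence it lies in $-\PLC(\Pi)$ and is nonzero. But $U^*$ is $W$-invariant and contained in $\PLC(\Pi)\cup\{0\}$, so $w(x-y)\in\PLC(\Pi)\cup\{0\}$, a contradiction. Thus $wy\in\Phi^-$, and $x\dom y$.

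\emph{($\Rightarrow$), reduction.} Suppose $x\dom y$ and fix $w\in W$; we must show $wx-wy\in\PLC(\Pi)\cup\{0\}$. Dominance is $W$-invariant straight from the definition, so $wx\dom wy$. Dominance also reverses under negation: $x\dom y$ iff $(-y)\dom(-x)$, because $u(-y)\in\Phi^-$ means $uy\in\Phi^+$, which forces $ux\in\Phi^+$. There are three cases.
\begin{itemize}
\item If $wx\in\Phi^+$ and $wy\in\Phi^-$, then $wx-wy\in\PLC(\Pi)$ trivially.
\item If $wx\in\Phi^-$, then $wy\in\Phi^-$, and $-wy\dom -wx$ with both roots positive.
\item The remaining case has $wx,wy\in\Phi^+$.
\end{itemize}
In the last two cases it therefore suffices to prove the \textbf{key claim}: if $x,y\in\Phi^+$ and $x\dom y$, then $x-y\in\PLC(\Pi)\cup\{0\}$.

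\emph{Key claim.} Assume $x\neq y$. By Lemma~\ref{domandform}, $(x,y)\geq1$, so $W'=\langle r_x,r_y\rangle$ is an infinite dihedral reflection subgroup. Let $\alpha,\beta\in\Phi^+$ be its canonical simple roots, with $(\alpha,\beta)=-c\leq -1$. Every positive root of $W'$ is a nonnegative combination of $\alpha$ and $\beta$, hence lies in $\Phi^+$. The positive roots of $W'$ form two sequences,
\[
\alpha,\ r_\alpha\beta,\ r_\alpha r_\beta\alpha,\ \dots
\qquad\text{and}\qquad
\beta,\ r_\beta\alpha,\ r_\beta r_\alpha\beta,\ \dots .
\]
Their coefficients with respect to $\{\alpha,\beta\}$ are given by Chebyshev-type recursions in $2c\geq2$ and are non-decreasing along each sequence.

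Two positive roots of $W'$ with $(x,y)\geq 1$ lie in the same sequence. Dominance of $x$ over $y$ then forces $x$ to appear later than $y$. One sees this by applying the element of $W'$ that sends $y$ to $\alpha$ or $\beta$ and then the corresponding simple reflection: if $x$ came earlier, this element would make $x$ negative while keeping $y$ positive. By monotonicity of the coefficients, $x-y$ is then a nonnegative combination of $\alpha,\beta\in\Phi^+$, hence lies in $\PLC(\Pi)\cup\{0\}$.

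The main obstacle is this dihedral step, in particular verifying monotonicity of the coefficients when $c>1$, that is, in the non-affine case. If that becomes messy, the fallback is induction on $\dep(x)$ via Lemma~\ref{depthchange}. Pick $a\in\Pi$ with $(x,a)>0$ and apply induction to $r_ax\dom r_ay$. The case $y=a$ is handled directly: $\coeff_a(x)\geq 2(x,a)\geq2$, so $x-a\in\PLC(\Pi)$. The remaining difficulty in this route is controlling the sign of $(x-y,a)$.
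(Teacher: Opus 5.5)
Your argument is correct in substance. The paper does not prove this statement; it quotes it from \cite[Proposition 4.10]{FU2}. So your proof is an independent, self-contained route.

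The direction $(\Leftarrow)$ is fine as written. So is your reduction of $(\Rightarrow)$ to the key claim, via $W$-invariance of dominance and $x\dom y\iff(-y)\dom(-x)$. In the key claim, the one substantial input is the theory of canonical simple roots of reflection subgroups (see \cite{VD89,MD90}). It supplies $\alpha,\beta\in\Phi^+$ with $(\alpha,\beta)\le -1$ and $\{z\in\Phi\mid r_z\in W'\}=W'\{\alpha,\beta\}$. That is what places the given roots $x,y$ in your two sequences. You use this implicitly and should cite it.

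Compared with the citation, your route shows the statement is a rank-two phenomenon. For positive $x\dom y$ it also exhibits $x-y$ explicitly as a nonnegative combination of the canonical simple roots of $\langle r_x,r_y\rangle$, with $(x-y,x-y)=2-2(x,y)\le 0$.

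Three repairs are needed in the dihedral step.

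\textbf{The separating element is misdescribed.} Write the first sequence as $\rho_0=\alpha$ and $\rho_k=u_k\gamma_{k+1}$, where $u_k=r_\alpha r_\beta r_\alpha\cdots$ has $k$ factors and $\gamma_{k+1}\in\{\alpha,\beta\}$. The positive roots of $W'$ made negative by $u_k^{-1}$ are exactly $\rho_0,\dots,\rho_{k-1}$. If $x=\rho_k$ and $y=\rho_l$ with $k<l$, then $u_l^{-1}$ \emph{alone} sends $x$ to a negative root and $y$ to $\gamma_{l+1}\in\Phi^+$, contradicting $x\dom y$. Composing further with $r_{\gamma_{l+1}}$, as you wrote, sends $y$ to $-\gamma_{l+1}$ and keeps $x$ negative, which proves nothing. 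Equivalently, use $u_{k+1}^{-1}=r_{\gamma_{k+1}}u_k^{-1}$, built from $x$ rather than $y$. The second sequence is symmetric.

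\textbf{The claim that $(x,y)\ge 1$ forces the same sequence needs the standard computation.} List the roots of $W'$ as $\pm\rho_k$, $k\in\Z$, with $\rho_k=c_{k+1}\alpha+c_k\beta$ and $c_{-i}=-c_i$. One has $(\rho_k,\rho_l)=\cosh((k-l)\theta)$ where $c=\cosh\theta$. Hence a root from the first sequence and a root from the second pair to at most $-1$.

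\textbf{Monotonicity is not an obstacle.} From $c_0=0$, $c_1=1$, $c_{i+1}=2cc_i-c_{i-1}$ one gets $c_{i+1}-c_i=(2c-1)c_i-c_{i-1}\ge c_i-c_{i-1}\ge\cdots\ge 1$ for every $c\ge 1$. So the non-affine case is no harder than the affine one, and the unfinished depth-induction fallback can be dropped.
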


\begin{lemma}
\label{tranPos}
Let $v\in V$ and let $a\in \Pi$. Then 
\begin{equation*}
\pos(r_a v) =
\begin{cases} r_a(\pos(v)\setminus\{a\}) & \text{if } (v, a) > 0,\\
r_a\pos(v)=\pos(v) & \text{if } (v, a) = 0,\\
r_a \pos(v)\cup \{a\} & \text{if } (v, a) < 0.
\end{cases}
\end{equation*}
\end{lemma}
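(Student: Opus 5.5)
The plan is to use two facts. First, $r_a$ is an isometry of the bilinear form and an involution, so $(r_a v, y)=(v, r_a y)$ for all $v,y\in V$. Second, $r_a$ permutes $\Phi^+\setminus\{a\}$ and sends $a$ to $-a$. For any $y\in\Phi^+$, we then have $y\in\pos(r_a v)$ if and only if $(v,r_a y)>0$. We split according to whether $y=a$.

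\emph{Roots $y\neq a$.} Here $r_a y\in\Phi^+\setminus\{a\}$, so $y\in\pos(r_a v)$ if and only if $r_a y\in\pos(v)\setminus\{a\}$, that is, if and only if $y\in r_a(\pos(v)\setminus\{a\})$. Since $r_a$ is a bijection of $\Phi^+\setminus\{a\}$, this gives
$$
\pos(r_a v)\setminus\{a\}=r_a\bigl(\pos(v)\setminus\{a\}\bigr).
$$
This identity holds in every case.

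\emph{The root $y=a$.} We have $(r_a v,a)=(v,r_a a)=-(v,a)$, so $a\in\pos(r_a v)$ if and only if $(v,a)<0$. We now read off the three cases.
\begin{itemize}
\item If $(v,a)>0$, then $a\notin\pos(r_a v)$, so $\pos(r_a v)=r_a(\pos(v)\setminus\{a\})$.
\item If $(v,a)=0$, then $a$ lies in neither $\pos(v)$ nor $\pos(r_a v)$. Hence $\pos(r_a v)=r_a\pos(v)$. Moreover, for $y\in\pos(v)$ we have $r_a y=y-2(y,a)a$ and $(v,r_a y)=(v,y)-2(y,a)(v,a)=(v,y)$. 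The same computation shows $\pos(v)=\pos(r_a v)$ directly, which gives the equality $r_a\pos(v)=\pos(v)$ asserted in the statement.
\item If $(v,a)<0$, then $a\notin\pos(v)$ and $a\in\pos(r_a v)$, so $\pos(r_a v)=r_a\pos(v)\cup\{a\}$.
\end{itemize}

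There is no real obstacle here. The only care needed is to separate the simple root $a$ from the other positive roots, because $r_a$ sends $a$ to a negative root, and to use the standard fact that $r_a$ permutes $\Phi^+\setminus\{a\}$.
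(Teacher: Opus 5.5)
Your proof is correct and takes essentially the same route as the paper. You first establish $\pos(r_a v)\setminus\{a\}=r_a(\pos(v)\setminus\{a\})$ from the fact that $r_a$ permutes $\Phi^+\setminus\{a\}$, then decide whether $a$ belongs by the sign of $(r_a v,a)=-(v,a)$; in the case $(v,a)=0$ the paper simply notes that $r_a v=v$, which gives $\pos(r_a v)=\pos(v)$ faster than your computation.
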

\begin{proof}
Let $x\in \Phi^+\setminus\{a\}$. Then $r_a x\in \Phi^+$ and so $x\in \pos(r_a v)$ if and only if $r_a x\in \pos(v)$. Thus 
$$\pos(r_a v)\setminus\{a\} =r_a(\pos(v)\setminus\{a\}).$$
If $(v, a)>0$ then $a\in \pos(v)$ and $a\notin\pos(r_a v)$, and hence 
$$\pos(r_a v)=r_a(\pos(v)\setminus\{a\}).$$
If $(v, a)=0$ then $r_a v=v$, and $a\notin\pos(v)=\pos(r_a v)$, and hence
$$\pos(v)=\pos(r_a v)=r_a \pos(v).$$
If $(v, a)<0$ then $a\notin\pos(v)$ and $a\in \pos(r_a v)$, and hence 
$$\pos(r_a v) = r_a \pos(v)\cup \{a\}.$$
\end{proof}

\begin{lemma}
\label{pos&conn}
Let $v\in V$ be such that $\pos(v)=\emptyset$.
\begin{itemize}
\item[(i)] $wv-v\in \PLC(\Pi)\cup \{0\}$,\,  $\forall w\in W$. Moreover, $\pos(wv)=\emptyset$ if and only if $wv=v$ which, in turn, occurs if and only if $w\in \langle\, r_a\mid (a, v)=0\,\rangle$.
\item[(ii)] Under the further assumption of $v\in \PLC(\Pi)$, if $\supp(v)$ is connected, then $\supp(wv)$ is connected,  $\forall w\in W$.
\end{itemize}
\end{lemma}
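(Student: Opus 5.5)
Since $\pos(v)=\emptyset$ means $(v,x)\leq 0$ for every $x\in\Phi^+$, I will argue by induction on $\ell(w)$, with Lemma~\ref{tranPos} supplying the inductive step.

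\emph{Part (i).} I will show that $wv-v\in\PLC(\Pi)\cup\{0\}$. Write $w=r_{a_1}\cdots r_{a_k}$ reduced, and set $w_j=r_{a_{j+1}}\cdots r_{a_k}$. Then
\[
wv-v=\sum_{j}\bigl(r_{a_j}w_jv-w_jv\bigr)=\sum_j -2(w_jv,a_j)\,a_j .
\]
Moreover $(w_jv,a_j)=(v,w_j^{-1}a_j)$. Because the expression is reduced, $w_j^{-1}a_j\in\Phi^+$, so each term satisfies $-2(v,w_j^{-1}a_j)\geq0$. This gives the first claim.

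Next I will show that $wv=v$ exactly when $w\in\langle r_a\mid (a,v)=0\rangle$. Suppose $wv=v$. Then every term above vanishes, because they are non-negative multiples of simple roots summing to $0$ (coefficients of $a$ coming from several $j$ have the same sign). Hence $(v,w_j^{-1}a_j)=0$ for every $j$. I will then induct on length. At the first step, $(w_{k-1}v,a_k)=(v,a_k)=0$, so $r_{a_k}$ fixes $v$. Proceeding inductively, each $w_jv=v$, so $(v,a_j)=0$ for every $j$. Therefore $w$ lies in the stated parabolic subgroup. The converse is immediate.

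It remains to show that $\pos(wv)=\emptyset$ iff $wv=v$. One direction is trivial. For the other, suppose $wv\neq v$ and pick $w$ of minimal length in its coset $wW_v$, where $W_v=\langle r_a\mid(a,v)=0\rangle$. Write $w=w'r_a$ with $\ell(w')<\ell(w)$. Then $(v,a)<0$; otherwise $(v,a)=0$, and $w$ would not be minimal. By Lemma~\ref{tranPos}, $\pos(r_av)=\{a\}$. Iterating Lemma~\ref{tranPos} along the reduced word, the set $\pos$ stays nonempty. Indeed, the root $w_ja$-type element persists: $|\pos(wv)|=|\pos(v)|$ plus the number of strictly negative pairings along the word. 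Each step with $(\cdot,a_j)>0$ would contradict reduction, since $(w_jv,a_j)=(v,w_j^{-1}a_j)\leq0$. So $|\pos(wv)|$ equals the number of $j$ with $(w_jv,a_j)<0$, which is positive. This counting is the cleanest route, and it covers the "iff" directly.

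\emph{Part (ii).} I will induct on $\ell(w)$ using $r_a u=u-2(u,a)a$ with $u=w_jv$. The coefficient of $a$ only increases, and $(u,a)\neq0$ forces either $a\in\supp(u)$ or $a$ to be adjacent to $\supp(u)$. The second alternative holds because $u\in\PLC(\Pi)$, so $(u,a)=\sum_b\lambda_b(b,a)$, which is nonzero only if some $b\in\supp(u)$ equals or neighbours $a$. Adding a vertex adjacent to a connected set keeps it connected, and supports never shrink since coefficients increase. I expect this step to be the main obstacle, because it relies on the coefficient of $a$ increasing at each step, i.e. on $(w_jv,a_j)\leq0$ from part (i). If instead $(u,a)=0$, then $r_au=u$ and nothing changes.
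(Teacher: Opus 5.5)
Your proof is correct, but it organises the induction differently from the paper.

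\textbf{How the paper argues.} The paper peels off a \emph{right} descent $a$ of $w$ and writes $wv=wr_av+2(v,a)\,wa$, so each increment is a non-negative multiple of the positive root $-wa$. It characterises $\pos(wv)=\emptyset$ by noting that if $(v,a)<0$ for a right descent $a$, then $-wa\in\pos(wv)$. For (ii) it writes $\supp(wv)=\supp(wr_av)\cup\supp(wa)$. It then glues the two connected pieces using the connectedness of root supports and the inequality $(wr_av,-wa)=(v,a)<0$.

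\textbf{How you argue.} You telescope along a reduced word $r_{a_1}\cdots r_{a_k}$. Each increment is then the non-negative multiple $-2(v,w_j^{-1}a_j)$ of the \emph{simple} root $a_j$. This buys two things:
\begin{itemize}
\item In (i), your counting via Lemma~\ref{tranPos} gives the exact value $|\pos(wv)|=|\{\beta\in N(w)\mid (v,\beta)<0\}|$. This is positive precisely when some increment is nonzero, i.e.\ when $wv\neq v$.
\item In (ii), the support grows one vertex at a time, and each new vertex already lies in or is adjacent to the current support. So you never need the fact that roots have connected support.
\end{itemize}
The paper's route is quicker for the $\pos$ characterisation, while yours is more elementary for (ii).

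\textbf{Two small points.}
\begin{itemize}
\item In the base step of the fixing argument you mean $(w_kv,a_k)=(v,a_k)$ with $w_k=1$, not $(w_{k-1}v,a_k)$.
\item The detour through a minimal coset representative in the $\pos$ part is unnecessary. The counting argument alone gives $\pos(wv)\neq\emptyset$, because $wv\neq v$ already forces some $(w_jv,a_j)<0$ by your telescoping identity.
\end{itemize}
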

\begin{proof}
(i): Induction on $\ell(w)$. If $\ell(w)=0$ then the statement is trivially true. Next, suppose that $\ell(w)\geq 1$, and take $a\in \Pi$ with 
$\ell(w r_a)=\ell(w)-1$. Then $wa\in \Phi^-$, and therefore 
\begin{align*}
wv-v&=(w r_a)(r_a v)-v\\
    &=(w r_a)(v-2(v, a)a)-v\\
    &=(w r_a v-v )-2(v, a) w r_a a\\
    &=\underbrace{(w r_a v-v)}_\text{$\in\PLC(\Pi)\cup\{0\}$ by inductive hypothesis}+ 2\underbrace{(v, a)}_\text{$\leq 0$} \underbrace{w a}_\text{$\in \Phi^-$}\\
    &\in \PLC(\Pi)\cup \{0\}.
\end{align*}
Hence $wv-v\in \PLC(\Pi)\cup\{0\}$, for all $w\in W$.

Next, note that $\forall w\in W$ and $\forall a\in \Pi$
$$(wv, wa)=(v, a)\leq 0.$$
Now let $w\in W$ and $a\in \Pi$ with $\pos(wv)=\emptyset$ and $\ell(w r_a)=\ell(w)-1$. Suppose that 
$(v, a)<0$. Then $(wv, -wa)>0$ and $-wa\in \pos(wv)$, a contradiction. Thus, it also follows from an induction on $\ell(w)$ that $\pos(wv)=\emptyset$ if and only if $wv=v$ which, in turn, occurs if and only if $w\in \langle\, r_a\mid (a, v)=0\,\rangle$.

(ii): Induction on $\ell(w)$. If $\ell(w)=0$ then $wv =v$ and there is nothing to prove. Next suppose that $\ell(w)\geq 1$, and take $a\in \Pi$ with $\ell(w r_a)=\ell(w)-1$. Then $wa\in \Phi^-$. Note that $(v, a)\leq 0$, and if $(v, a)=0$ then 
$$w v = wr_a (r_a v) =wr_a v.$$
Since $\ell(w r_a) =\ell(w)-1$ the desired result follows by induction. Otherwise $(v, a)<0$, and then 
\begin{equation*}
w v = wr_a (r_a v)=w r_a v-2(a, v)w r_a a =\underbrace{w r_a v}_\text{$\in \PLC(\Pi)$}+ 2\underbrace{(a, v)}_\text{$<0$} \underbrace{wa}_\text{$\in \Phi^-$}.
\end{equation*}
Whence 
\begin{equation}
\label{s}
\supp(wv)=\supp(wr_a v)\cup \supp(wa).
\end{equation}
Since $wa\in \Phi$, it follows that $\supp(wa)$ is connected, and moreover, $\supp(w r_a v)$ is connected, by the inductive hypothesis. Now note that 
$$0> (w r_a v, wr_a a)=-(wr_av, wa),$$ 
and hence there exists some $b\in \supp(w r_a v)$ and $c\in \supp(wa)$ with $(b, c)\neq 0$. This observation together with (\ref{s}) and the connectedness of $\supp(w r_a v)$ and $\supp(wa)$ readily yields that $\supp(wv)$ is connected, completing the induction. 
\end{proof}


The following is a key geometric property enjoyed by elements of the dual of the Tits cone $U^*$.
This is well-known to the experts of the field (and it is a special case of \cite[Theorem 5.8]{FU3}), but we include a proof here for completeness.  
\begin{proposition}
\label{leq0}
Let $u,v\in U^*$ be arbitrary. Then 
$$(u, v)\leq 0.$$
\end{proposition}
\begin{proof}
Suppose, for a contradiction, that there are $u, v\in U^*$ with $(u, v)>0$. Replacing $v$ by any positive scalar multiple of itself, we may assume that $(u, v)=1$. Set $h:=|u|_1$, and define
$$\mathscr{A}:=\Big\{x\in U^*\colon |x|_1\leq |v|_1 \text{ and $\exists z\in U^*$ with $(z, x)\geq 1$ and $|z|_1\leq h$} \Big\}.$$
Note that $v\in \mathscr{A}$ and hence $\mathscr{A}\neq \emptyset$. Next, set 
$$\epsilon:=\frac{2}{h|\Pi|}>0.$$
Claim: For any given $x\in \mathscr{A}$ we can find $y\in \mathscr{A}$ with 
$$|y|_1\leq |x|_1-\epsilon.$$
Note that once this claim is proved, starting with $x=v$, a finite number of iterations of the claimed process will produce some $y\in \mathscr{A}$ with $|y|_1<0$, contradicting 
$$y\in\mathscr{A}\subseteq U^*= \bigcap_{w\in W}w \big(\PLC(\Pi)\cup \{0\}\big)\subset \PLC(\Pi)\cup \{0\}.$$
Now we prove the claim: given $x\in \mathscr{A}$, choose $z\in U^*$ with $(z, x)\geq 1$ and $|z|_1\leq h$.
Write $z=\sum_{b\in \Pi}\coeff_b(z)b$, then 
$$(z, x)=\sum_{b\in \Pi}\coeff_b(z)(b, x)\geq 1.$$
This implies that $\exists b\in \Pi$ with
$$\coeff_b(z)(b, x)\geq \frac{1}{|\Pi|},$$
and since $\coeff_b(z)\leq |z|_1\leq h$, hence 
$$(b, x)\geq \frac{1}{h|\Pi|}=\frac{\epsilon}{2}.$$
Put $y:=r_b x= x-2(b, x) b$, and note that $y\in U^*$ for $x\in U^*$ and $U^*$ is $W$-invariant. Then 
$$|y|_1=|x|_1-2(b, x)\leq |x|_1-\epsilon.$$
To complete the proof of the claim we need to prove that $y\in \mathscr{A}$, and it only remains to find some $t\in U^*$ with 
$(t, y)\geq 1$ and $|t|_1\leq h$. We have two possibilities to cover: either $(b, z)\geq 0$ or else $(b, z)<0$, and we show that in either case we can find such a suitable $t$.

If $(b, z)\geq 0$, then put $t:=r_b z$. Then $t\in U^*$, and 
$$(t, y)=(r_b z, r_b x)=(z, x)\geq 1,$$
and 
$$|t|_1=|z|_1-2(b, z)\leq |z|_1 \leq h.$$

If $(b, z)<0$, then put $t:=z\in U^*$. Then $|t|_1=|z|_1\leq h$ and
$$(z, y)=(z, r_b x)=(z, x-2(b, x)b)=(z, x)-2\underbrace{(b, x)}_\text{$>0$}\underbrace{(z, b)}_\text{$<0$}\geq (z, x)\geq 1.$$

\end{proof}

\begin{lemma}
\label{neg}
Let $\eta\in U^*\setminus\{0\}$, and let $x\in \Phi^+$ with $\supp(x)\not\subset\supp(\eta)$. Then $(x, \eta)\leq 0$.
\end{lemma}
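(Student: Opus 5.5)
The plan is to argue by contradiction, using the $W$-invariance of $U^*$ together with the fact that $U^*\subseteq \PLC(\Pi)\cup\{0\}$. No induction on depth should be needed.

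First, I record the reflection attached to an arbitrary root. Write $x=wa$ with $w\in W$ and $a\in\Pi$, and set $r_x:=wr_aw^{-1}\in W$. Since $W$ preserves the bilinear form and $(a,a)=1$, for every $v\in V$ we get
$$
r_xv=w\bigl(w^{-1}v-2(w^{-1}v,a)a\bigr)=v-2(v,x)x .
$$
Because $r_x\in W$ and $U^*$ is $W$-invariant, $r_x\eta\in U^*$ whenever $\eta\in U^*$.

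Now suppose, for a contradiction, that $(x,\eta)>0$. Choose $b\in\supp(x)\setminus\supp(\eta)$, which exists by hypothesis. Since $x\in\Phi^+$, we have $\coeff_b(x)>0$, while $\coeff_b(\eta)=0$. Hence
$$
\coeff_b(r_x\eta)=\coeff_b(\eta)-2(\eta,x)\coeff_b(x)=-2(\eta,x)\coeff_b(x)<0 .
$$
So $r_x\eta$ has a negative coefficient and is nonzero. This contradicts $r_x\eta\in U^*\subseteq\PLC(\Pi)\cup\{0\}$, and therefore $(x,\eta)\leq 0$.

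The only step needing any care is the reflection formula for a non-simple root $x$ and the fact that $r_x$ lies in $W$; both follow immediately from $x=wa$ and the $W$-invariance of $(\,\cdot\,,\,\cdot\,)$. The hypothesis $\eta\neq0$ is harmless here, since the statement is trivial when $\eta=0$. I do not expect any real obstacle.
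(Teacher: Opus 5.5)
Your proof is correct and is essentially the paper's argument: assume $(x,\eta)>0$, pick $b\in\supp(x)\setminus\supp(\eta)$, and observe that $\coeff_b(r_x\eta)=-2(\eta,x)\coeff_b(x)<0$, which contradicts $r_x\eta\in U^*\subseteq\PLC(\Pi)\cup\{0\}$. Your extra check that $r_x=wr_aw^{-1}\in W$ acts by $v\mapsto v-2(v,x)x$ just makes explicit a step the paper uses without comment.
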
 
\begin{proof}
Suppose for a contradiction that $(x, \eta)>0$. Choose $a\in \Pi$ such that $a\in \supp(x)$ but $a\notin\supp(\eta)$.
Since $\eta\in U^*$, it follows that
$$r_x \eta=\eta-2(x, \eta)x\in \PLC(\Pi).$$
But, under the assumption that $\Pi$ is a basis for $V$, the coefficient for $a$ in $r_x\eta$ is a strictly negative quantity, namely, $-2(x, \eta)\coeff_a(x)$, a contradiction.
\end{proof}

\begin{proposition}
  \label{epsilon}
  Let $\eta\in U^*$. Then for each $\epsilon >0$, the following set 
  $$\{\, x\in \Phi^+\mid (x, \eta)\geq \epsilon \,\}$$
  is finite.
\end{proposition}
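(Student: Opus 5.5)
Since $\eta\in U^*$ is fixed, the key observation is that reflecting $\eta$ in a root $x$ with $(x,\eta)\geq\epsilon$ subtracts a large multiple of $x$, and the result must still lie in $U^*\subseteq\PLC(\Pi)\cup\{0\}$. The plan is to turn this into a bound on $|x|_1$ and then apply Lemma~\ref{normbd}.

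First, take any $x\in\Phi^+$ with $(x,\eta)\geq\epsilon$. By $W$-invariance of $U^*$ we have
$$r_x\eta=\eta-2(x,\eta)x\in U^*\subseteq \PLC(\Pi)\cup\{0\},$$
so every coefficient of $r_x\eta$ is non-negative. Applying the linear functional $|\cdot|_1$, which is non-negative on $\PLC(\Pi)\cup\{0\}$, gives
$$0\leq |r_x\eta|_1=|\eta|_1-2(x,\eta)|x|_1\leq |\eta|_1-2\epsilon|x|_1 .$$
Here $|x|_1>0$ because $x\in\PLC(\Pi)$. Hence
$$|x|_1\leq \frac{|\eta|_1}{2\epsilon}.$$

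Second, this shows
$$\{x\in\Phi^+\mid (x,\eta)\geq\epsilon\}\subseteq\Big\{x\in\Phi^+\colon |x|_1\leq \tfrac{|\eta|_1}{2\epsilon}\Big\},$$
and the right-hand set is finite by Lemma~\ref{normbd}, since $|R|<\infty$. The case $\eta=0$ is trivial: the set is empty, and the bound $|x|_1\leq 0$ is consistent with that.

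There is no real obstacle. The only points to verify are that the reflection $r_x$ for a positive root lies in $W$, so that $W$-invariance of $U^*$ applies, and that $|\cdot|_1$ is non-negative on $U^*$. Both follow directly from the definitions recalled above.
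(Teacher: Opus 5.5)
Your proof is correct and is essentially the paper's argument: the paper also uses $|r_x\eta|_1=|\eta|_1-2(x,\eta)|x|_1$ together with Lemma~\ref{normbd}. The only difference is that the paper argues by contradiction along an infinite injective sequence with $|x_i|_1\to\infty$, whereas you extract the explicit bound $|x|_1\leq |\eta|_1/(2\epsilon)$ directly; your non-strict $0\leq|r_x\eta|_1$ is also slightly more careful than the paper's strict inequality, which tacitly needs $\eta\neq 0$.
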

\begin{proof}
Suppose the contrary holds. Then there exists an infinite injective sequence of positive roots $(x_i)_{i\in \N}\subset \Phi^+$ with 
$(\eta, x_i)\geq \epsilon$ for all $i\in \N$. On the other hand, since $\eta\in U^*$, it follows that
\begin{align*}
0<|r_{x_i}\eta|_1 &=|\eta|_1-2(\eta, x_i)|x_i|_1\\
                  &\leq |\eta|_1-2\epsilon |x_i|_1.
\end{align*}
Since the sequence $(x_i)_{i\in \N}$ is infinite and injective, it follows from Lemma~\ref{normbd} that the sequence of real 
numbers $(|x_i|_1)_{i\in \N}$ tends to infinity. Hence there exists some $n\in \N$ such that 
$$|x_i|_1>\frac{|\eta|_1}{2\epsilon},\quad \forall i\geq n.$$
Consequently, $|r_{x_i}\eta|_1<0$ for all $i\geq n$, contradicting the assumption of $\eta\in U^*=\bigcap_{w\in W} w(\PLC(\Pi)\cup\{0\})$.
\end{proof}
Note that Proposition \ref{epsilon} has the following immediate consequence:
\begin{corollary}
  \label{lim0}
  Let $\eta\in U^*$, and let $(x_i)_{i\in \N}\subset \Phi^+$ be an infinite injective sequence of positive roots such that $(\eta, x_i)\geq 0$ for all $i\in \N$. Then 
  $$\lim_{i\to\infty}(\eta, x_i)=0.$$
  \qed
\end{corollary}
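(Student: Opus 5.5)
The statement is Corollary~\ref{lim0}, and it follows from Proposition~\ref{epsilon} by a direct $\epsilon$-argument. Fix $\eta\in U^*$ and an infinite injective sequence $(x_i)_{i\in\N}\subset\Phi^+$ with $(\eta,x_i)\geq 0$ for all $i$. Since every term is non-negative, it suffices to show that for every $\epsilon>0$ there is some $n$ with $(\eta,x_i)<\epsilon$ for all $i\geq n$. The lower bound $0\leq(\eta,x_i)$ then squeezes the sequence to $0$.

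Fix $\epsilon>0$. By Proposition~\ref{epsilon}, the set
$$F_\epsilon:=\{x\in\Phi^+\mid (x,\eta)\geq\epsilon\}$$
is finite. Because the sequence is injective, each root of $F_\epsilon$ occurs as $x_i$ for at most one index $i$. Hence the set of indices $\{i\in\N\mid x_i\in F_\epsilon\}$ is finite, and we may take $n$ larger than all of them. For every $i\geq n$ we then have $x_i\notin F_\epsilon$, that is, $0\leq(\eta,x_i)<\epsilon$. Since $\epsilon>0$ was arbitrary, $\lim_{i\to\infty}(\eta,x_i)=0$.

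There is no real obstacle here: all of the content lies in Proposition~\ref{epsilon}, which rests on Lemma~\ref{normbd}. The only point to check is that injectivity turns finiteness of $F_\epsilon$ into finiteness of a set of indices, and this is immediate.
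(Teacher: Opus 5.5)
Your proposal is correct and is the argument the paper intends. The paper states Corollary~\ref{lim0} as an immediate consequence of Proposition~\ref{epsilon} and omits the proof; your $\epsilon$-argument supplies that step, using injectivity to turn finiteness of $\{x\in\Phi^+\mid (x,\eta)\geq\epsilon\}$ into finiteness of a set of indices and then squeezing with the lower bound $0$.
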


The next result is an easy observation on the dual of the Tits cone, and it can be useful for general purpose.
\begin{lemma}
\label{two}
Suppose that $\eta\in U^*$, and $a, b\in \Pi\cap \pos(\eta)$ with $a\neq b$. Then 
$$(a, r_b \eta)\geq (a, \eta).$$
In particular, $a\in \pos(r_b \eta)$.
\end{lemma}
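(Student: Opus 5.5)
Compute directly: $r_b\eta=\eta-2(\eta,b)b$, so
$$(a,r_b\eta)=(a,\eta)-2(\eta,b)(a,b).$$
Since $b\in\pos(\eta)$ we have $(\eta,b)>0$. Since $a\neq b$ are distinct simple roots, $(a,b)\leq 0$ by the standing assumptions on the Coxeter datum: it equals $-\cos(\pi/m_{ab})\leq 0$ when $m_{ab}<\infty$, and is $\leq -1$ otherwise. Hence the correction term satisfies $-2(\eta,b)(a,b)\geq 0$, which gives $(a,r_b\eta)\geq(a,\eta)$.

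For the ``in particular'' clause, $a\in\pos(\eta)$ gives $(a,\eta)>0$, so $(a,r_b\eta)\geq (a,\eta)>0$. Moreover $a\in\Pi\subseteq\Phi^+$, hence $a\in\pos(r_b\eta)$.

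Neither the hypothesis $\eta\in U^*$ nor any earlier result beyond the sign convention $(a,b)\leq 0$ for distinct simple roots is actually needed, and there is no genuine obstacle. The one point to state explicitly is that distinct simple roots pair non-positively, since that is the only input.
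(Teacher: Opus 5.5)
Your proof is correct and is the paper's argument: expand $(a,r_b\eta)=(a,\eta)-2(\eta,b)(a,b)$, then use $(\eta,b)>0$ and $(a,b)\leq 0$ for distinct simple roots. You are also right that the hypothesis $\eta\in U^*$ plays no role in this computation.
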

\begin{proof}
\begin{align*}
(a, r_b\eta)&=(a, \eta-2(\eta, b)b)=(a, \eta)-2\underbrace{(\eta, b)}_\text{$>0$}\underbrace{(a, b)}_\text{$\leq 0$}\\
            &\geq (a, \eta).
\end{align*}
\end{proof}

We close this section with the following important property of Coxeter groups of finite rank (originally due to Krammer \cite{DK94}, see also \cite[Proposition 4.5.5]{ABFB}):
\begin{proposition}
\label{lowbd}
If $(W, R)$ is a Coxeter system in which $R$ is a finite generating set, then the following set of possible bilinear form values in the range of $[-1, 1]$ between pairs of roots 
$$\big\{ (a, b)\mid \text{$a, b\in \Phi$ with $-1\leq (a, b) \leq 1$} \big\}$$
is finite. In particular, there exists some fixed $L>0$ such that 
$$|(a, b)|\geq L, \text{ whenever $a, b\in \Phi$ with $(a, b)\neq 0$}.$$ 
\qed
\end{proposition}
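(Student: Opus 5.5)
The plan is to reduce the statement to finitely many finite root systems, using the classical fact that every finite subgroup of a Coxeter group lies in a conjugate of a finite standard parabolic subgroup. The second assertion then follows formally from the first.

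\textbf{Step 1 (the values $\pm1$).} These are only two values, so it suffices to show that the set of values lying in the open interval $(-1,1)$ is finite.

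\textbf{Step 2 (pass to a finite parabolic subgroup).} Let $a,b\in\Phi$ with $-1<(a,b)<1$. If $a=\pm b$ then $(a,b)=\pm1$, so $a\neq\pm b$. By the discussion of dihedral reflection subgroups above, $W':=\langle r_a,r_b\rangle$ is then a finite dihedral group. I will invoke the theorem of Tits (see \cite{BN68}, or \cite[Chapter 2]{ABFB}): every finite subgroup of $W$ is contained in $wW_Jw^{-1}$ for some $w\in W$ and some $J\subseteq R$ with $W_J$ finite. Hence $w^{-1}r_aw=r_{w^{-1}a}$ and $w^{-1}r_bw=r_{w^{-1}b}$ are reflections lying in $W_J$.

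\textbf{Step 3 (locate the roots).} I will use the standard fact that a reflection $r_x$, with $x\in\Phi$, lies in the parabolic subgroup $W_J$ if and only if $x\in\Phi_J:=W_J\{a_s\mid s\in J\}$. One way to see this: write the reflection as an element of $W_J$; its unique root up to sign is then the image of some $a_s$, $s\in J$, under an element of $W_J$. Applying this gives $w^{-1}a,\,w^{-1}b\in\Phi_J$. Since the form is $W$-invariant,
$$(a,b)=(w^{-1}a,w^{-1}b)\in\{(x,y)\mid x,y\in\Phi_J\}.$$

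\textbf{Step 4 (finiteness).} Since $W_J$ is finite, $\Phi_J$ is finite. There are only finitely many spherical subsets $J\subseteq R$, because $|R|<\infty$. Therefore the union over spherical $J$ of the sets $\{(x,y)\mid x,y\in\Phi_J\}$ is finite, and it contains every value $(a,b)$ lying in $(-1,1)$. Together with Step 1, this proves the first assertion.

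\textbf{Step 5 (the constant $L$).} Let $F$ be the finite set of values obtained above. Any nonzero value $(a,b)$ either lies in $F\setminus\{0\}$ or satisfies $|(a,b)|>1$. So $L:=\min\bigl(\{1\}\cup\{|t|\mid t\in F\setminus\{0\}\}\bigr)>0$ works.

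The main obstacle is Step 2, the Tits theorem on finite subgroups. It is usually proved via the Davis/Tits complex: a finite group acting on a complete CAT(0) space, or on the Tits cone, has a fixed point, and the stabiliser of a point is a conjugate of a spherical parabolic subgroup. I would simply cite it rather than reprove it. If one prefers to avoid it, an alternative route is Krammer's original argument, which bounds the depth of the pair $(a,b)$ after simultaneously conjugating it into a minimal position. That route is more technical, so I would first try the parabolic-subgroup argument.
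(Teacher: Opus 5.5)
Your argument is correct. There is no in-paper proof to compare it with: the paper quotes the result from Krammer and from Bj\"orner--Brenti (Prop.~4.5.5) and gives no argument. So you are supplying a self-contained derivation from two classical inputs: finiteness of $\langle r_a,r_b\rangle$ when $|(a,b)|<1$, and Tits' theorem that finite subgroups lie in conjugates of spherical standard parabolic subgroups.

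What your route buys is an explicit description of the finite set. It is contained in $\{\pm1\}\cup\bigcup_J\{(x,y)\mid x,y\in\Phi,\ r_x,r_y\in W_J\}$, with $J$ ranging over spherical subsets. Hence $L$ can be read off from the finite parabolic subsystems. The cost is that the substance is delegated to Tits' theorem, whose proof is a fixed-point argument on the Tits cone or Davis complex.

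Two steps deserve one more line.

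In Step~3 you only assert that a reflection lying in $W_J$ has its root in $\Phi_J$. This is true, e.g.\ by the strong exchange condition applied to a reduced $J$-word for $r_x$, but you do not need it. The equality $r_x=r_y$ forces $(v,x)x=(v,y)y$ for all $v$, hence $x=\pm y$. So by faithfulness at most $2|W_J|$ roots $x$ satisfy $r_x\in W_J$, and finiteness follows at once.

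In Step~2, finiteness of $\langle r_a,r_b\rangle$ for $|(a,b)|<1$ is not pure linear algebra, since a rotation through an irrational angle has infinite order. The paper only records it as well known, but it follows from Lemma~\ref{normbd}. The form is positive definite on $\R a+\R b$, so the $\langle r_ar_b\rangle$-orbit of $a$ is a bounded set of roots, hence finite by Lemma~\ref{normbd}. Some power $(r_ar_b)^k$ with $k\geq 1$ therefore fixes $a$. It is then trivial on $\R a+\R b$, and it is trivial on the orthogonal complement because $r_a$ and $r_b$ are.
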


%
%
%

\section{Asymptotic action on the dual of the Tits cone}
\label{sec:asymptotic-action}
\begin{definition}
Let $\eta\in U^*$ be such that $\pos(\eta)\neq \emptyset$. We construct inductively a sequence $\eta_1,\eta_2, \ldots$, of elements of $U^*$ by
\begin{align*}
\eta_1&:=\eta,\\
\noalign{\hbox{and for all $i\in \N$}}
\eta_{i+1} &:=r_{a_i} \eta_i,\quad \text{where $a_i\in \Pi\cap \pos(\eta_i)$, whenever $\pos(\eta_i)\neq \emptyset$.}
\end{align*}
Let $\mathbf{a}:=(a_1, a_2, \ldots)$ be the associated sequence of simple roots, and call $\mathbf{a}=(a_i)_{1\leq i\leq |\pos(\eta)|}$ a \emph{transitional sequence} for $\eta$, and we call the sequence $(\eta_i)_{1\leq i\leq|\pos(\eta)|+1} \subset U^*$ the \emph{descending sequence} corresponding to the transitional sequence $\mathbf{a}$. 
\end{definition}

\begin{remark}
  \label{ts}
  If $\eta\in U^*$ is such that $|\pos(\eta)|<\infty$ (that is, $\eta\in \mathscr{Z}=\bigcup_{w\in W} w\mathscr{K}$) then Lemma~\ref{tranPos} implies that 
  $$\pos(\eta_{|\pos(\eta)|+1})=\emptyset,$$
  and consequently, every transitional sequence terminates, and every transitional sequence contains $|\pos(\eta)|$ terms with the corresponding descending sequence containing $|\pos(\eta)|+1$ terms. 
  
  If $\eta\in U^*$ is such that $|\pos(\eta)|=\infty$ (that is, $\eta\in U^*\setminus\mathscr{Z}$), then $\forall w\in W$
  $$|\pos(w\eta)|=\infty,$$
  and, in particular, 
  $$|\pos(\eta_i)|=\infty, \quad \forall i\in \N,$$
  and consequently, every transitional sequence for such an $\eta$ must be an infinite sequence, and we may write $(a_i)_{i\in \N}$  for such an infinite transitional sequence.
\end{remark}

\begin{remark}
\label{lim}
Let $\eta\in U^*$ be such that $|\pos(\eta)|=\infty$, and let $\mathbf{a}=(a_i)_{i\in \N}$ be a transitional sequence for $\eta$ with corresponding $(\eta_i)_{i\in \N}$. Then $|\pos(\eta_i)|=\infty$ for all $i\in \N$. 

Note that $\eta_i\in U^*=\bigcap_{w\in W} w (\PLC(\Pi)\cup\{0\} )\subset \PLC(\Pi)\cup\{0\}$, and therefore
$$0\leq \coeff_a(\eta_i), \quad \forall a\in \Pi, \,\forall i\in \N,$$
and since $a_i\in \pos(\eta_i)\cap \Pi$, it follows that 
\begin{align*}
\coeff_{a_i}(\eta_{i+1})&<\coeff_{a_i}(\eta_i),\quad \forall i\in \N,\\
\noalign{\hbox{while}}
\coeff_a(\eta_{i+1}) &=\coeff_a(\eta_i),\quad \forall a\in \Pi\setminus\{a_i\}, \text{ and } \forall i\in \N.
\end{align*}
Consequently, 
$$0\leq \coeff_a(\eta_{i+1})\leq \coeff_a(\eta_i),\quad \forall i\in \N, \text{ and } \forall a\in \Pi.$$
Therefore for each $a\in \Pi$ the sequence $(\coeff_a(\eta_i))_{i\in \N}$ is a (weakly) decreasing sequence of non-negative real numbers, and thus
$$\lim_{i\to\infty}\coeff_a(\eta_i) \text{  exists for each $a\in \Pi$},$$
which, in turn, establishes that the following limit exists: 
$$\eta_{\mathbf{a}}:=\lim_{i\to\infty}\eta_i=\sum_{a\in \Pi}(\lim_{i\to\infty} \coeff_a(\eta_i)a).$$
Note that the sequence of real numbers $(|\eta_i|_1)_{i\in \N}$ is  (weakly) decreasing, we are justified in calling $(\eta_i)_{1\leq i\leq |\pos(\eta)|+1}$ a descending sequence. 
\end{remark}

\begin{definition}
Let $\eta\in U^*$ with $|\pos(\eta)|=\infty$, and let $\mathbf{a}=(a_i)_{i\in \N}$ be an infinite transitional sequence for $\eta$ giving rise to $(\eta_i)_{i\in\N}$. Denote
$$\eta_{\mathbf{a}}:=\lim_{i\to\infty}\eta_i,$$
and call $\eta_{\mathbf{a}}$ \emph{the limit of $\eta$ via the transitional sequence $\mathbf{a}$}.

Call an index $m_{\mathbf{a}}\in \N$ a \emph{separating index} for $\mathbf{a}=(a_i)_{i\in \N}$ if from the $m_{\mathbf{a}}$-th term of 
$\mathbf{a}=(a_i)_{i\in \N}$ onward, each $a\in \Pi$ either occurs infinitely many times in $(a_i)_{i\geq m_{\mathbf{a}}}$, that is, 
$$|\{i\geq m_{\mathbf{a}}\colon a=a_i\}|=\infty;$$
or else does not occur at all, that is, 
$$|\{i\geq m_{\mathbf{a}}\colon a=a_i\}|=0.$$

Moreover, we define the associated $\mathbf{a}_{\infty}$ to be
$$\mathbf{a}_{\infty}:=\big\{\,a\in \Pi\mid\text{ $a$ occurs infinitely many times in $\mathbf{a}=(a_i)_{i\in \N}$}\,\big\}.$$

On the other hand, when $\eta\in U^*$ with $|\pos(\eta)|<\infty$, we let $\mathbf{a}=(a_i)_{1\leq i\leq |\pos(\eta)|}$ be a finite transitional sequence for $\eta$, giving rise to $(\eta_i)_{1\leq i\leq |\pos(\eta)|+1}$. Denote
$$\eta_{\mathbf{a}}:=\eta_{|\pos(\eta)|+1},$$
and call $\eta_{\mathbf{a}}$ \emph{the limit of $\eta$ via the transitional sequence $\mathbf{a}$}. In this case, we take $m_{\mathbf{a}}:=|\pos(\eta)|+1$, and call $m_{\mathbf{a}}$ the separating index for $\mathbf{a}$. Moreover the limit of $\eta$ via $\mathbf{a}$ is then
$$\eta_{\mathbf{a}}=\eta_{m_{\mathbf{a}}}.$$
In the special case of $\pos(\eta)=\emptyset$, we set the convention for $\eta$ to have only the empty transitional sequence with corresponding limit $\eta$ and separating index to be simply $1$.  
\end{definition}

\begin{remark}
In the situation described in the above definition,  when $|\pos(\eta)|=\infty$, note that if $m_{\mathbf{a}}$ is a separating index for $\mathbf{a}=(a_i)_{i\in \N}$, then every $m\in \N$ with $m\geq m_{\mathbf{a}}$ is also a separating index for $\mathbf{a}$.

Moreover, $\mathbf{a}_{\infty}$ can be alternatively characterised as 
\begin{equation}
\label{alt}
  \mathbf{a}_{\infty}=\big\{\, a_i\mid i\geq m_{\mathbf{a}} \,\big\}.
\end{equation}
\end{remark}

\begin{definition}
Let $\eta\in U^*$. Define $T(\eta)$ to be the set of all transitional sequences for $\eta$, that is
\begin{align*}
&T(\eta):=\{(a_i)_{1\leq i\leq |\pos(\eta)|}\mid\!\text{$(a_i)_{1\leq i\leq |\pos(\eta)|}$ is a transitional sequence of $\eta$}\}\\
       &=\{(a_i)_{1\leq i \leq |\pos(\eta)|} \mid \text{$a_i\in \Pi$, $(r_{a_{i-1}} \cdots r_{a_1}\eta, a_i )>0,\,1\leq i\leq |\pos(\eta)|$ }\}.
\end{align*}
\end{definition}

\begin{remark}
\label{eg1}
Suppose that $\eta \in U^*\setminus \{0\}$, and set $$h:=\inf_{w\in W} |w\eta|_1.$$ 
If $w\in W$ with $|w\eta|_1\neq h$ then $\pos(w\eta) \neq \emptyset$. Indeed, there must be some 
$a\in\Pi$ with $(a, w\eta)>0$. Otherwise Lemma~\ref{pos&conn} yields that for all $w'\in W$
$$w'w\eta-w\eta \in \PLC(\Pi)\cup\{0\},$$
and therefore
$$|w'w\eta|_1\geq |w\eta|_1>h, \quad \forall w'\in W,$$
contradicting the definition of $h$. Assuming that there is no $\eta'$ in the $W$-orbit of $\eta$ with $|\eta'|_1=h$, then any transitional sequence for $\eta$ must be infinite, and $|\pos(\eta)|=\infty$.
\end{remark}

\begin{lemma}
\label{eg2}
Suppose that $\eta\in U^*$ with $|\pos(\eta)|=\infty$. Let $\mathbf{a}:=(a_i)_{i\in \N}$ be an infinite transitional sequence for $\eta$ with the corresponding descending sequence $(\eta_i)_{i\in \N}$ where
\begin{align*}
  \eta_1 &:= \eta, \\
\noalign{\hbox{and}}  
  \eta_{i+1} &:= r_{a_i} \eta_i,
\end{align*} 
with the extra condition that  $a_i\in \Pi\cap \pos(\eta_i)$ such that 
$$(a_i, \eta_i)\geq (a, \eta_i), \quad \forall a\in \Pi\cap\pos(\eta_i).$$
Then for the corresponding limit $\eta_{\mathbf{a}}$ we have that 
$$\pos(\eta_{\mathbf{a}})=\emptyset.$$
\end{lemma}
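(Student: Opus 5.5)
We argue by contradiction through the quantity $|\eta_i|_1$, which strictly decreases at each step. By Remark~\ref{lim}, $|\eta_{i+1}|_1=|\eta_i|_1-2(a_i,\eta_i)$. Since $|\eta_i|_1\geq 0$ for all $i$,
$$\sum_{i\in\N}2(a_i,\eta_i)=|\eta|_1-|\eta_{\mathbf a}|_1\leq |\eta|_1<\infty .$$
Hence $(a_i,\eta_i)\to 0$ as $i\to\infty$. This is where the greedy choice of $a_i$ enters: $(a_i,\eta_i)$ is the maximum of $(a,\eta_i)$ over $a\in\Pi\cap\pos(\eta_i)$. Therefore
$$\max_{a\in\Pi}(a,\eta_i)\to 0 .$$
Here we use that if no simple root pairs positively with $\eta_i$, the maximum is $\leq 0$. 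That case cannot occur anyway, since $\pos(\eta_i)$ is infinite and $\eta_i\in U^*\setminus\mathscr Z$, so Lemma~\ref{pos&conn} combined with Remark~\ref{eg1} forces some simple root into $\pos(\eta_i)$.

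Next we pass to the limit. The map $v\mapsto (a,v)$ is continuous and $\eta_i\to\eta_{\mathbf a}$ by Remark~\ref{lim}. So for each $a\in\Pi$,
$$(a,\eta_{\mathbf a})=\lim_{i\to\infty}(a,\eta_i)\leq \limsup_{i\to\infty}\max_{b\in\Pi}(b,\eta_i)=0 .$$
Thus $(a,\eta_{\mathbf a})\leq 0$ for every $a\in\Pi$. Also $\eta_{\mathbf a}\in U^*$, because $U^*$ is closed and contains every $\eta_i$. Since $\eta_{\mathbf a}$ pairs non-positively with every simple root and every positive root is a non-negative combination of simple roots, it pairs non-positively with every positive root. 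Hence
$$\pos(\eta_{\mathbf a})=\emptyset .$$
This is exactly the claim: $\eta_{\mathbf a}\in\mathscr K$.

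The main point requiring care is the first step: extracting $(a_i,\eta_i)\to0$ from the telescoping identity and the non-negativity of the $\ell^1$-norm on $U^*$. The greedy rule is essential here, since it upgrades decay along the chosen root to decay of the maximum over all simple roots. After that, the argument is just continuity and positivity.
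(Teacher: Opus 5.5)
Your proof is correct and is essentially the paper's argument: the paper also uses that the decreasing sequence $(|\eta_i|_1)$ converges to $|\eta_{\mathbf a}|_1$, so greedy decrements $2(a_i,\eta_i)\geq 2(a,\eta_i)$ cannot stay bounded away from zero. The paper phrases this as a contradiction (one step beyond an index where $|\eta_i|_1<|\eta_{\mathbf a}|_1+(a,\eta_{\mathbf a})$ overshoots the limit) rather than through your telescoping sum; your explicit passage from simple roots to all of $\Phi^+$ fills in a step the paper leaves implicit, and your detour through Remark~\ref{eg1} is unnecessary, since the existence of $a_i\in\Pi\cap\pos(\eta_i)$ at every step is part of the hypothesis that $\mathbf a$ is an infinite transitional sequence.
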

\begin{proof}
Suppose, for a contradiction, that $(a, \eta_{\mathbf{a}})>0$ for some $a\in \Pi$.
Since $$\lim_{i\to\infty} (a, \eta_i) =(a, \eta_{\mathbf{a}}),$$
and 
$$\lim_{i\to\infty} |\eta_i|_1 =|\eta_{\mathbf{a}}|_1,$$
we may choose $i$ such that 
\begin{align*}
  |\eta_i|_1 &< |\eta_{\mathbf{a}}|_1+(a, \eta_{\mathbf{a}})\\
  \noalign{\hbox{and}}
  (a, \eta_i) &>\frac{1}{2}(a, \eta_{\mathbf{a}}). 
\end{align*}
Now 
$$(a_i, \eta_i)\geq (a, \eta_i)>\frac{1}{2}(a, \eta_{\mathbf{a}}),$$
and $\eta_{i+1}=\eta_i-2(a_i, \eta_i)a_i$, we see that 
\begin{align*}
  |\eta_{i+1}|_1  =|\eta_i|_1 -2(a_i, \eta_i) 
                 &\leq |\eta_i|_1 -2(a, \eta_i)\\ 
                 &\leq |\eta_i|_1 -(a, \eta_{\mathbf{a}}) \\
                 & < |\eta_{\mathbf{a}}|_1,
\end{align*}
a contradiction, since $|\eta_{\mathbf{a}}|_1$ is the limit of the decreasing sequence $(|\eta_i|_1)_{i\in \N}$.
\end{proof}

\begin{remark}
Suppose that $\eta\in U^*$ with $|\pos(\eta)|=\infty$. Suppose that we impose a fixed ordering on the elements of $\Pi$, and we construct a transitional sequence
$\mathbf{a}:=(a_i)_{i\in \N}$ for $\eta$ with respect to this ordering, cyclically. That is, starting with $\eta_1:=\eta$, and we choose $a_i\in \Pi\cap\pos(\eta_i)$ via the following scheme: scan the elements of $\Pi$ under this chosen ordering, until the first $a\in \Pi$ such that $a\in \pos(\eta_1)$, and set $a_1=a$, and $\eta_2=r_{a_1}\eta_1$.
Next, continue the scanning  process with $\eta_2$ and the element in $\Pi$ immediately succeeding $a_1$ under the chosen ordering, then either there is a subsequent element  $a'\in\Pi\cap \pos(\eta_2)$ under this chosen ordering, or else we reach the last element of $\Pi$ under this ordering without finding another element  $a'\in\Pi\cap \pos(\eta_2)$, and in the latter situation we (cyclically) restart the scanning process from the first element of $\Pi$ under this chosen order, and repeat the process so far. Then 
$$\pos(\eta_{\mathbf{a}})=\emptyset.$$
Indeed, suppose, for a contradiction, that $\pos(\eta_{\mathbf{a}})\neq \emptyset$ and we may choose $a\in \Pi\cap\pos(\eta_{\mathbf{a}})$. That is, $\lim_{i\to\infty}(a, \eta_i)>0$.
Thus, for some fixed $\epsilon >0$ there exists some $N\in \N$ with 
\begin{equation}
\label{cyclic}
(a, \eta_i)\geq \epsilon, \quad \forall i\geq N.
\end{equation}
Then Lemma~\ref{two}, the finiteness of $\Pi$ and the cyclic and sequential scheme of choosing the transitional sequence $\mathbf{a}=(a_i)_{i\in\N}$ adopted here together imply that
$$a=a_j,\quad \text{for infinitely many $j\in \N$}.$$
Thus (\ref{cyclic}) implies that for some $k$ sufficiently large
$$\coeff_a{\eta_k}<0,$$
a contradiction to $w\eta\in U^*\subset \PLC(\Pi)\cup\{0\}$. 
\end{remark}

\begin{lemma}
\label{trivial}
Suppose that $\eta\in U^*$ with $\pos(\eta)$ being infinite, and let $\mathbf{a}=(a_i)_{i\in \N}$ be a transitional sequence for $\eta$ with corresponding  $\mathbf{a}_{\infty}$.
Then $$\mathbf{a}_{\infty}\subseteq \supp(\eta_i),\,\,\forall i\in \N.$$
\end{lemma}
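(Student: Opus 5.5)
The plan is to show that the support of $\eta_i$ can never lose a simple root that is later used in the transitional sequence, and then to apply this to the roots in $\mathbf{a}_{\infty}$, which are used at arbitrarily late indices.

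First I would record how supports evolve along a descending sequence. By Remark~\ref{lim}, for every $a\in\Pi$ the sequence $\coeff_a(\eta_i)$ is weakly decreasing and non-negative. Consequently, if $a\notin\supp(\eta_i)$, then $\coeff_a(\eta_j)=0$ for all $j\geq i$. Equivalently, $\supp(\eta_{j})\subseteq\supp(\eta_i)$ for all $j\geq i$, so the supports are weakly decreasing.

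The key step is to show that $a_j\in\supp(\eta_j)$ for every $j$. Suppose instead that $\coeff_{a_j}(\eta_j)=0$. Since $(a_j,\eta_j)>0$, we have
$$\coeff_{a_j}(\eta_{j+1})=\coeff_{a_j}(\eta_j)-2(a_j,\eta_j)=-2(a_j,\eta_j)<0.$$
This contradicts $\eta_{j+1}\in U^*\subseteq\PLC(\Pi)\cup\{0\}$. Alternatively, one can invoke Lemma~\ref{neg} with $x=a_j$, noting that $\eta_j\neq0$ because $\pos(\eta_j)\neq\emptyset$.

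Now fix $i\in\N$ and $a\in\mathbf{a}_{\infty}$. Since $a$ occurs infinitely often in $\mathbf{a}$, choose $j\geq i$ with $a_j=a$. The key step gives $a\in\supp(\eta_j)$, and the monotonicity of supports gives $\supp(\eta_j)\subseteq\supp(\eta_i)$. Hence $a\in\supp(\eta_i)$, which proves the lemma.

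There is no real obstacle here. The only point needing care is that the monotonicity of supports runs in the correct direction: later supports are contained in earlier ones, so membership in a later support transfers back to $\supp(\eta_i)$.
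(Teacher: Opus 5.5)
Your proposal is correct and follows essentially the same route as the paper: supports are weakly decreasing along the descending sequence, and a simple root outside $\supp(\eta_k)$ cannot lie in $\pos(\eta_k)$. The only difference is presentation. The paper argues by contradiction and justifies the second fact implicitly, whereas you argue directly via $a_j\in\supp(\eta_j)$ and justify it with the negative-coefficient argument (or Lemma~\ref{neg}).
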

\begin{proof}
Suppose that there exists $a\in \mathbf{a}_{\infty}$ with $a\notin \supp(\eta_j)$ for some $j\in \N$. 
Since $\forall k\geq j$ and $\forall a\in \Pi$,
 $$\coeff_a(\eta_k)\leq \coeff_a(\eta_j),$$ 
it follows that 
\begin{equation}
\label{dec}
\supp(\eta_k)\subseteq \supp(\eta_j), \,\,\forall k\geq j.
\end{equation}
Thus $a\notin \supp(\eta_k)$, for all $k\geq j$, and this means that $(a, \eta_k)\leq 0$ for all $k\geq j$, contradicting $a\in \mathbf{a}_{\infty}$.
\end{proof}

\begin{lemma}
\label{easy}
Suppose that $\eta\in U^*$ with $\pos(\eta)$ being infinite, and let $\mathbf{a}=(a_i)_{i\in\N}$ be a transitional sequence for $\eta$ with  corresponding separating index $m_{\mathbf{a}}$ and $\mathbf{a}_{\infty}$.  Then for each $a\in \supp(\eta_{m_{\mathbf{a}}}) \setminus \mathbf{a}_{\infty}$, 
$$\coeff_a (\eta_i) =\coeff_a(\eta_{m_{\mathbf{a}}}),\,\,\forall i\geq m_{\mathbf{a}}.$$
\end{lemma}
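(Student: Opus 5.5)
The proof follows directly from how coefficients change along a descending sequence, as recorded in Remark~\ref{lim}. Passing from $\eta_i$ to $\eta_{i+1}=r_{a_i}\eta_i=\eta_i-2(a_i,\eta_i)a_i$ changes only the coefficient of $a_i$. Every other simple root keeps its coefficient, that is,
$$\coeff_a(\eta_{i+1})=\coeff_a(\eta_i)\quad\text{for all } a\in\Pi\setminus\{a_i\}.$$
So it suffices to show that a simple root $a\notin\mathbf{a}_\infty$ never appears as $a_i$ for any $i\geq m_{\mathbf{a}}$.

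This is exactly what the separating index guarantees. By the definition of $m_{\mathbf{a}}$, each $a\in\Pi$ either occurs infinitely often in $(a_i)_{i\geq m_{\mathbf{a}}}$ or does not occur there at all. Take $a\in\supp(\eta_{m_{\mathbf{a}}})\setminus\mathbf{a}_\infty$. If $a$ occurred infinitely often in the tail, it would occur infinitely often in $\mathbf{a}$, and so $a\in\mathbf{a}_\infty$, which is a contradiction. Hence $a\neq a_i$ for every $i\geq m_{\mathbf{a}}$; equivalently, by (\ref{alt}), $\{a_i\mid i\geq m_{\mathbf{a}}\}=\mathbf{a}_\infty$ does not contain $a$.

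An induction on $i\geq m_{\mathbf{a}}$ now finishes the argument. The case $i=m_{\mathbf{a}}$ is trivial. If $\coeff_a(\eta_i)=\coeff_a(\eta_{m_{\mathbf{a}}})$, then since $a\neq a_i$ we get $\coeff_a(\eta_{i+1})=\coeff_a(\eta_i)=\coeff_a(\eta_{m_{\mathbf{a}}})$.

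There is no real obstacle here. The hypothesis $a\in\supp(\eta_{m_{\mathbf{a}}})$ is not needed for the equality itself; it only makes the conclusion non-vacuous, since the common value is then nonzero. The argument works equally well for every $a\in\Pi\setminus\mathbf{a}_\infty$.
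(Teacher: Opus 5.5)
Your proposal is correct and follows essentially the same route as the paper: the separating-index dichotomy (equivalently the characterisation (\ref{alt})) shows that $a\notin\mathbf{a}_{\infty}$ never occurs among $(a_i)_{i\geq m_{\mathbf{a}}}$, and since $r_{a_i}$ changes only the coefficient of $a_i$, the coefficient of $a$ stays fixed from index $m_{\mathbf{a}}$ onward. You are also right that the hypothesis $a\in\supp(\eta_{m_{\mathbf{a}}})$ is not needed for the equality itself.
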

\begin{proof}
If $a\notin \mathbf{a}_{\infty}$ then it follows from the alternative characterisation (\ref{alt}) of $\mathbf{a}_{\infty}$ that 
$$|\{i\geq m_{\mathbf{a}}\colon a=a_i\}|=0,$$
and consequently $a$ shall not appear as a term of $(a_i)_{i\geq m_{\mathbf{a}}}$, and therefore 
$\coeff_a (\eta_i) =\coeff_a(\eta_{m_{\mathbf{a}}})$, $\forall i\geq m_{\mathbf{a}}$.
\end{proof}

\begin{lemma}
\label{stn}
Suppose that $\eta\in U^*$ with $\pos(\eta)$ being infinite, and let $\mathbf{a}=(a_i)_{i\in \N}$ be a transitional sequence for $\eta$ with  corresponding descending sequence $(\eta_i)_{i\in \N}$, separating index $m_{\mathbf{a}}$, and $\mathbf{a}_{\infty}$. Then the support of the $\eta_i$ will stabilise, and, in fact 
$$\supp(\eta_n)=\supp(\eta_{m_{\mathbf{a}}}), \,\,\forall n\geq m_{\mathbf{a}}.$$
\end{lemma}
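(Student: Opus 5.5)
We must show that $\supp(\eta_n)=\supp(\eta_{m_{\mathbf{a}}})$ for all $n\geq m_{\mathbf{a}}$. One inclusion is already available. By the monotonicity of coefficients in Remark~\ref{lim}, the relation (\ref{dec}) from the proof of Lemma~\ref{trivial} gives
$$\supp(\eta_n)\subseteq\supp(\eta_{m_{\mathbf{a}}})\qquad\text{for all } n\geq m_{\mathbf{a}}.$$
So the plan is to prove the reverse inclusion: no simple root in $\supp(\eta_{m_{\mathbf{a}}})$ ever loses its coefficient from index $m_{\mathbf{a}}$ onward.

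Fix $a\in\supp(\eta_{m_{\mathbf{a}}})$ and $n\geq m_{\mathbf{a}}$. I will split into two cases according to whether $a\in\mathbf{a}_\infty$.

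\emph{Case $a\notin\mathbf{a}_\infty$.} Lemma~\ref{easy} gives $\coeff_a(\eta_n)=\coeff_a(\eta_{m_{\mathbf{a}}})>0$, so $a\in\supp(\eta_n)$.

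\emph{Case $a\in\mathbf{a}_\infty$.} Lemma~\ref{trivial} states directly that $\mathbf{a}_\infty\subseteq\supp(\eta_i)$ for every $i\in\N$, and in particular $a\in\supp(\eta_n)$. The reason is that a root of $\mathbf{a}_\infty$ missing from some $\supp(\eta_j)$ would stay missing for all later indices by (\ref{dec}). It would then pair non-positively with every subsequent $\eta_k$, since its only contribution to $(a,\eta_k)$ would come from the off-diagonal terms $(a,b)\leq 0$ with nonnegative coefficients. Hence it could never be chosen again as some $a_k$, contradicting its infinite occurrence in $\mathbf{a}$.

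Combining the two cases gives $\supp(\eta_{m_{\mathbf{a}}})\subseteq\supp(\eta_n)$, and together with the first inclusion this yields equality. There is no real obstacle here, since both cases are covered by the preceding lemmas. The only point needing care is that the defining property of the separating index is what makes Lemma~\ref{easy} applicable: every $a\notin\mathbf{a}_\infty$ genuinely never occurs after $m_{\mathbf{a}}$, by (\ref{alt}).
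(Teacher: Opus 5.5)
Your proposal is correct and follows essentially the same route as the paper: the inclusion $\supp(\eta_n)\subseteq\supp(\eta_{m_{\mathbf{a}}})$ comes from (\ref{dec}), and the reverse inclusion is obtained by applying Lemma~\ref{easy} to $\supp(\eta_{m_{\mathbf{a}}})\setminus\mathbf{a}_\infty$ and Lemma~\ref{trivial} to $\mathbf{a}_\infty$. The paper first observes that the weakly decreasing supports must stabilise because $\Pi$ is finite, but that step is logically redundant once the two inclusions are established.
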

\begin{proof}
Note that from (\ref{dec}) we see that $(\supp(\eta_i))_{i\in \N}$ form a (weakly) decreasing sequence of subsets of the finite set $\Pi$ with respect to inclusion, bounded from below, and therefore there exists some $N\in \N$  with 
$$\supp(\eta_i)=\supp(\eta_N),\,\,\forall i\geq N.$$
Next we show that $m_{\mathbf{a}}$ qualifies to be such an $N$, and in view of (\ref{dec}) we only need to show that 
$$\supp(\eta_{m_{\mathbf{a}}})\subseteq \supp(\eta_n), \,\forall n\geq m_{\mathbf{a}}.$$
Note that Lemma~\ref{easy} yields that 
$$\supp(\eta_{m_{\mathbf{a}}})\setminus \mathbf{a}_{\infty}\subseteq \supp(\eta_n),\,\,\forall n\geq m_{\mathbf{a}},$$
this together with Lemma~\ref{trivial} then establishes that 
$$\supp(\eta_{m_{\mathbf{a}}})\subseteq \supp(\eta_n),\,\,\forall n\geq m_{\mathbf{a}}.$$
\end{proof}



\begin{lemma}
\label{pos}
Let $\eta\in U^*$ with $\pos(\eta)$ being non-empty, and let $\eta_{\mathbf{a}}$ be the limit of $\eta$ via some transitional sequence  $\mathbf{a}=(a_i)_{1\leq i \leq |\pos(\eta)|}$. For all $j<i\leq |\pos(\eta)|$, define
$a_{i;j}:=r_{a_{j}} r_{a_{j+1}}\cdots r_{a_{i-1}} a_i$. Then for all $j<i\leq |\pos(\eta)|$ 
\begin{itemize}
\item[(i)] $a_{i;j}\in \pos(\eta_j)$;
\item[(ii)] $\ell(r_{a_i}\cdots r_{a_j})=i-j+1$, and correspondingly 
 $$N(r_{a_i}\cdots r_{a_j})=\{a_j,a_{j+1; j}, a_{j+2; j}, \ldots, a_{i;j}\}.$$ 
 Moreover, $a_{k;j}\neq a_{l;j}$ whenever $k$ and $l$ are distinct integers both greater than $j$. 
\end{itemize}
\end{lemma}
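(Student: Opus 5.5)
The plan is to derive (i) from the $W$-invariance of the form together with Lemma~\ref{tranPos}, and then obtain (ii) by the standard inversion-set induction, using (i) to supply the positivity needed at each step.

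\textbf{Step 1 (the pairing).} For $j<i$, since each $r_a$ is an involutive isometry of $(\cdot,\cdot)$,
$$(\eta_j,a_{i;j})=(\eta_j,\,r_{a_j}\cdots r_{a_{i-1}}a_i)=(r_{a_{i-1}}\cdots r_{a_j}\eta_j,\,a_i)=(\eta_i,a_i)>0,$$
because $a_i\in\Pi\cap\pos(\eta_i)$ by the definition of a transitional sequence. So the only issue for (i) is that $a_{i;j}$ is a \emph{positive} root.

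\textbf{Step 2 (positivity, by downward induction on $j$).} Here I use Lemma~\ref{tranPos}. Since $(\eta_j,a_j)>0$, that lemma gives $\pos(\eta_{j+1})=r_{a_j}(\pos(\eta_j)\setminus\{a_j\})$. Equivalently,
$$r_{a_j}\pos(\eta_{j+1})=\pos(\eta_j)\setminus\{a_j\}.$$
For the base case $j=i-1$, we have $a_i\in\pos(\eta_i)$, so $a_{i;i-1}=r_{a_{i-1}}a_i\in\pos(\eta_{i-1})\setminus\{a_{i-1}\}$. For the inductive step, if $a_{i;j+1}\in\pos(\eta_{j+1})$, then $a_{i;j}=r_{a_j}a_{i;j+1}\in\pos(\eta_j)\setminus\{a_j\}$. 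This proves (i), and it also records that $a_{i;j}\neq a_j$ for all $i>j$.

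\textbf{Step 3 (length and inversion set).} I argue by induction on $i\ge j$. The case $i=j$ is trivial, with $N(r_{a_j})=\{a_j\}$. Put $u:=r_{a_{i-1}}\cdots r_{a_j}$, so that $r_{a_i}\cdots r_{a_j}=r_{a_i}u$. By the standard criterion, $\ell(r_{a_i}u)=\ell(u)+1$ if and only if $u^{-1}a_i\in\Phi^+$. Since $u^{-1}a_i=r_{a_j}\cdots r_{a_{i-1}}a_i=a_{i;j}$, which is positive by Step 2, the length increases by one. This gives $\ell(r_{a_i}\cdots r_{a_j})=i-j+1$.

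Moreover, the standard formula $N(su)=N(u)\cup\{u^{-1}a_s\}$, valid when $\ell(su)>\ell(u)$, gives inductively
$$N(r_{a_i}\cdots r_{a_j})=\{a_j,a_{j+1;j},\dots,a_{i;j}\}.$$
Because $|N(w)|=\ell(w)=i-j+1$ and the list has exactly $i-j+1$ entries, these entries are pairwise distinct. In particular $a_{k;j}\neq a_{l;j}$ for distinct $k,l>j$; applying this with $i=\max(k,l)$ gives the final claim.

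\textbf{Expected obstacle.} The main point needing care is Step 2. The positivity of the conjugated roots $a_{i;j}$ is not automatic for an arbitrary word. It is exactly the equality $r_{a_j}\pos(\eta_{j+1})=\pos(\eta_j)\setminus\{a_j\}$ from Lemma~\ref{tranPos} that keeps these roots positive and distinct from $a_j$. Once that is in place, (ii) follows from the standard Coxeter-group facts above.
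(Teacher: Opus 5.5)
Your proof is correct and is essentially the paper's argument. Part (ii) is the same length/inversion-set induction driven by the positivity of $a_{i;j}$. For (i), the paper proves positivity by contradiction (a first index $k$ with $r_{a_{k+1}}\cdots r_{a_{i-1}}a_i=a_k$ would force $(a_i,\eta_i)=-(a_k,\eta_k)<0$), which is precisely the mechanism that Lemma~\ref{tranPos} packages in your downward induction. Your version has the minor advantage of giving $a_{i;j}\in\pos(\eta_j)\setminus\{a_j\}$ directly, so your Step 1 is in fact redundant.
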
 
\begin{proof}
For (i): By the definition of $\mathbf{a}=(a_i)_{1\leq i\leq |\pos(\eta)|}$, for each $j<i\leq |\pos(\eta)|$
\begin{align*}
0<(a_i, \eta_i)&=(a_i, r_{a_{i-1}}\cdots r_{a_j}\eta_j)\\
               &=(r_{a_j}\cdots r_{a_{i-1}} a_i, \eta_j)\\
               &=(a_{i;j},\eta_j).          
\end{align*}
Thus for (i), it is enough to show that $a_{i;j}\in \Phi^+$. Suppose for a contradiction that $a_{i;j}\in \Phi^-$. Then there exists some
index $k\in \{j, \ldots, i-1\}$ with 
\begin{align*}
r_{a_{k+1}}\cdots r_{a_{i-1}} a_i &\in \Phi^+,\\
\noalign{\hbox{and yet}}
r_{a_k}r_{a_{k+1}}\cdots r_{a_{i-1}} a_i &\in \Phi^-.
\end{align*}
Since $a_k\in \Pi$, it follows from the above that $r_{a_{k+1}}\cdots r_{a_{i-1}} a_i =a_k$. Now by our definition of 
$\mathbf{a}=(a_i)_{1\leq i\leq |\pos(\eta)|}$:
\begin{align*}
0>-(a_k, \eta_k)&=(r_{a_k} a_k\,, \,r_{a_{k-1}}\cdots r_{a_j}\eta_j)\\
               &=(r_{a_k} r_{a_{k+1}}\cdots r_{a_{i-1}} a_i\,,\,r_{a_{k-1}}\cdots r_{a_j}\eta_j)\\
               &=(a_i, \,r_{a_{i-1}}\cdots r_{a_{j}} \eta_j )\\
               &=(a_i, \eta_i)>0,
\end{align*}
a contradiction. 

For (ii): By (i) above, for all $j<i\leq |\pos(\eta)|$, we have that 
$$a_{i;j}=r_{a_{j}}\cdots r_{a_{i-1}}a_i \in \Phi^+, $$
thus $\ell(r_{a_j}\cdots r_{a_{i-1}} r_{a_i})=\ell(r_{a_j}\cdots r_{a_{i-1}})+1$. Consequently, an induction yields that for all $j<i \leq |\pos(\eta)|$
$$\ell(r_{a_j}\cdots r_{a_i})=i-j+1.$$ 
Note that for each $k\in\{j, \ldots, i\}$, since $r_{a_i}\cdots r_{a_{k+1}}$ is a left-segment of $r_{a_i}\cdots r_{a_{k+1}}r_{a_k}$, which is in turn a left segment of a reduced expression $r_{a_i}\cdots r_{a_j}$, it follows that $\ell(r_{a_i}\cdots r_{a_{k+1}}r_{a_k})=\ell(r_{a_i}\cdots r_{a_{k+1}})+1$, and therefore $r_{a_i}\cdots r_{a_{k+1}}a_k\in\Phi^+$. Then
\begin{align*}
r_{a_i}\cdots r_{a_j} a_{k;j}&=r_{a_i}\cdots r_{a_j} r_{a_j}\cdots r_{a_{k-1}}a_k\\
                             &=r_{a_i}\cdots r_{a_k} a_k\\
                             &=-r_{a_i}\cdots r_{a_{k+1}}a_k\in\Phi^-,
\end{align*}
whence $a_{k;j}\in N(r_{a_i}\cdots r_{a_j})$ for each $k\in\{j, \ldots, i\}$.
On the other hand, it follows from an induction on $\ell(r_{a_i}\cdots r_{a_j})$ that 
$$N(r_{a_i}\cdots r_{a_j})\subseteq \{a_j,a_{j+1; j}, a_{j+2; j}, \ldots, a_{i;j}\}.$$ 
Then it follows from $\ell(r_{a_i}\cdots r_{a_j})=i-j+1$ that 
$a_{k;j}\neq a_{l;j}$ whenever $k$ and $l$ are distinct integers in $\{j, \ldots, i\}$.
\end{proof}

\begin{lemma}
\label{form0}
Let $v\in V$, and define an infinite sequence $(v_i)_{i\in \N}$ by:
$$v_1:=v, \,\text{and}\,\, v_{i+1}:=r_{a_i} v_i ,\,\text{where $a_i\in \Pi$ for all  $i\in \N$}.$$
Suppose that $v_{\infty}:=\underset{i\to\infty}{\lim} v_i$ exists. Then 
$$(a, v_{\infty})=0,$$
for all those  $a\in \Pi$ with $a=a_i$ for infinitely many $i\in\N$.
\end{lemma}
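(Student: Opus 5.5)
The plan is to use the explicit formula for the reflection and pass to the limit along the subsequence of indices at which $a$ is applied. Fix $a\in\Pi$ with $a=a_i$ for infinitely many $i$, and let $i_1<i_2<\cdots$ enumerate these indices. For each $k$ we have
$$v_{i_k+1}=r_{a}v_{i_k}=v_{i_k}-2(v_{i_k},a)\,a,$$
so that
$$v_{i_k+1}-v_{i_k}=-2(v_{i_k},a)\,a.$$

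The first step is to observe that, since $v_\infty=\lim_{i\to\infty}v_i$ exists in the finite-dimensional space $V$, both subsequences $(v_{i_k})_k$ and $(v_{i_k+1})_k$ converge to $v_\infty$. Hence the left-hand side above tends to $0$ as $k\to\infty$.

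Next, because $a\neq 0$ (it is an element of the basis $\Pi$), the scalar coefficient must satisfy $(v_{i_k},a)\to 0$. To make this concrete, apply the linear functional $\coeff_a$, which is continuous on $V$ and satisfies $\coeff_a(a)=1$:
$$\coeff_a(v_{i_k+1})-\coeff_a(v_{i_k})=-2(v_{i_k},a)\longrightarrow 0.$$

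Finally, the bilinear form is continuous, so $(v_{i_k},a)\to(v_\infty,a)$. Comparing the two limits gives $(a,v_\infty)=0$.

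The only point that needs care is this passage to the limit. It depends on two facts: the bilinear form is continuous in finite dimensions, and the two consecutive subsequences $(v_{i_k})$ and $(v_{i_k+1})$ share the same limit. Both are immediate here, so I expect no genuine obstacle. In particular, the argument uses neither $U^*$, positivity, nor $\pos$, which is consistent with the statement's being formulated for an arbitrary $v\in V$.
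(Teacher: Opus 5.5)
Your proof is correct and is essentially the paper's argument: both look at the infinitely many indices where $r_a$ is applied and combine the one-step formula there with the convergence of $(v_i)$. The paper argues by contradiction, setting the sign flip $(a,v_{j+1})=-(a,v_j)$ against the eventually constant sign of $(a,v_i)$ near a nonzero limit. You instead argue directly that the increments $-2(v_{i_k},a)\,a$ must tend to $0$, which is an equally short variant and handles both signs at once.
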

\begin{proof}
Suppose for a contradiction that $(a, v_{\infty})>0$. Since $v_{\infty}=\underset{i\to\infty}{\lim} v_i$, it follows that there exists 
some $N\in \N$ with 
\begin{equation}
\label{v_infty}
(a, v_i)>0 \text{ for all $i\geq N$}.
\end{equation} 
Now given that $a=a_i$ for infinitely many $i\in \N$, there exists some $j\geq N$ with $a=a_j$. Then (\ref{v_infty}) yields that
$$(a, v_{j+1})=-(a, v_j)<0,$$
which is itself contradicting (\ref{v_infty}).
An entirely similar argument yields that $(a, v_{\infty})<0$ also leads to a contradiction, and therefore the desired result follows.
\end{proof}

\begin{remark}
\label{eta_a}
Note that for $\eta \in U^*$ with $|\pos(\eta)|=\infty$, let $\mathbf{a}=(a_i)_{i\in \N}$ be a transitional sequence for $\eta$ with corresponding 
$\eta_{\mathbf{a}}$ and $\mathbf{a}_{\infty}$. Then Lemma~\ref{form0} has the following immediate consequence:
$$\mathbf{a}_{\infty} \subseteq \{a\in \Pi\mid (a, \eta_{\mathbf{a}})=0\}.$$
\end{remark}

\begin{proposition}
\label{component}
Let $\eta\in U^*$ with $\pos(\eta)$ being infinite, and let $\eta_{\mathbf{a}}$ be the limit of $\eta$ via some transitional sequence  $\mathbf{a}=(a_i)_{i\in \N}$ with  $m_{\mathbf{a}}$ being a separating index. 
Then $\supp(\eta_{\mathbf{a}})$ is a union of connected components of $\supp(\eta_{m_{\mathbf{a}}})$.
\end{proposition}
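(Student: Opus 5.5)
The plan is to set $S:=\supp(\eta_{m_{\mathbf{a}}})$ and $S':=\supp(\eta_{\mathbf{a}})$, and to show directly that no edge of the Coxeter graph joins $S'$ to $S\setminus S'$. This suffices: a subset of $S$ with no edges to its complement in $S$ is a union of connected components of $S$.

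First, $S'\subseteq S$. By Remark~\ref{lim} the coefficients $\coeff_c(\eta_i)$ are weakly decreasing and non-negative. By Lemma~\ref{stn}, $\supp(\eta_n)=S$ for all $n\ge m_{\mathbf{a}}$. So any $c\notin S$ has $\coeff_c(\eta_n)=0$ for $n\ge m_{\mathbf{a}}$, hence $\coeff_c(\eta_{\mathbf{a}})=0$.

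Next, take $b\in S\setminus S'$. I claim $b\in\mathbf{a}_\infty$. Otherwise Lemma~\ref{easy} gives $\coeff_b(\eta_i)=\coeff_b(\eta_{m_{\mathbf{a}}})>0$ for all $i\ge m_{\mathbf{a}}$. Passing to the limit, $\coeff_b(\eta_{\mathbf{a}})>0$, contradicting $b\notin S'$. Now Remark~\ref{eta_a}, which comes from Lemma~\ref{form0}, gives $(b,\eta_{\mathbf{a}})=0$.

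Expanding $\eta_{\mathbf{a}}$ in the basis $\Pi$ gives
$$0=(b,\eta_{\mathbf{a}})=\sum_{c\in S'}\coeff_c(\eta_{\mathbf{a}})\,(b,c).$$
Since $b\notin S'$, every $c$ in the sum is distinct from $b$, so $(b,c)\le 0$. Each coefficient $\coeff_c(\eta_{\mathbf{a}})$ with $c\in S'$ is strictly positive. So the sum is a sum of non-positive terms equal to zero, and every term must vanish. Hence $(b,c)=0$ for all $c\in S'$, meaning $b$ is not adjacent to any vertex of $S'$.

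Therefore $S'$ and $S\setminus S'$ are mutually non-adjacent, and $S'$ is a union of connected components of $S$. The only step needing care is showing that every vertex of $S\setminus S'$ lies in $\mathbf{a}_\infty$, which is where the separating index and Lemma~\ref{easy} are used. The rest is the sign argument above, which relies on off-diagonal entries of the form being non-positive.
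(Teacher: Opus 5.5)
Your proposal is correct and follows essentially the paper's argument: a vertex $b\in\supp(\eta_{m_{\mathbf{a}}})\setminus\supp(\eta_{\mathbf{a}})$ must occur infinitely often in $\mathbf{a}$, so Lemma~\ref{form0} gives $(b,\eta_{\mathbf{a}})=0$. Then the non-positive off-diagonal form values, paired with the strictly positive coefficients of $\eta_{\mathbf{a}}$, force $(b,c)=0$ for every $c\in\supp(\eta_{\mathbf{a}})$; you are slightly more careful than the paper, justifying $b\in\mathbf{a}_\infty$ via Lemma~\ref{easy}, checking $\supp(\eta_{\mathbf{a}})\subseteq\supp(\eta_{m_{\mathbf{a}}})$, and writing out the sign argument that the paper compresses into ``consequently''.
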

\begin{proof}
First, recall that $\supp(\eta_{i+1})\subseteq \supp(\eta_i)$ for all $i\in \N$. Now suppose that $a\in \supp(\eta_{m_{\mathbf{a}}})\setminus \supp(\eta_{\mathbf{a}})$. 
Then the coefficients of $a$ would have decreased from $\eta_{m_{\mathbf{a}}}$  along the journey to the limit $\eta_{\mathbf{a}}$, and this means that  $a=a_i$ for infinitely many $i\in \N$. 
Then Lemma~\ref{form0} implies that $(a, \eta_{\mathbf{a}})=0$. On the other hand for all $a\in \Pi\setminus\supp(\eta_{\mathbf{a}})$ we have that $(a, b)\leq 0$ for each $b\in \supp(\eta_{\mathbf{a}})$.
Consequently, $(a, b)=0$ for all $a\in\supp(\eta_{m_{\mathbf{a}}})\setminus \supp(\eta_{\mathbf{a}})$ and $b\in\supp(\eta_{\mathbf{a}})$ and we are done.
\end{proof}

\begin{lemma}
\label{empty}
Let $\eta\in U^*$ with $|\pos(\eta)|=\infty$, and let $\eta_{\mathbf{a}}$ be the limit of $\eta$  arising from some transitional sequence  $\mathbf{a}=(a_i)_{i\in \N}$ with $m_{\mathbf{a}}$ being a separating index.
Let $J\subseteq \supp(\eta_{m_{\mathbf{a}}})$ be a union of connected components such that $|\pos(\eta_{m_{\mathbf{a}}}|_J)|<\infty$ (where $\cdot|_J$ means the projection/restriction of $\cdot$ onto the subspace spanned by $J$). Then $J\cap \mathbf{a}_{\infty}=\emptyset$.   
\end{lemma}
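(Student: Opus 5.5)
The plan is to show that, from the separating index onward, the dynamics of the descending sequence splits along the connected components of $\supp(\eta_{m_{\mathbf{a}}})$, and then to count reflections on $J$. Write $\eta':=\eta_{m_{\mathbf{a}}}$ and $J^c:=\supp(\eta')\setminus J$. For $n\geq m_{\mathbf{a}}$ we have $\supp(\eta_n)=\supp(\eta')$ by Lemma~\ref{stn}, so $\eta_n=\eta_n|_J+\eta_n|_{J^c}$. Since $J$ is a union of connected components of $\supp(\eta')$, every $b\in J$ is orthogonal to every element of $J^c$. Hence for each $b\in J$ we get $(b,\eta_n)=(b,\eta_n|_J)$. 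We also get $r_b\eta_n|_{J^c}=\eta_n|_{J^c}$, which gives
$$(r_b\eta_n)|_J=r_b(\eta_n|_J),\qquad (r_b\eta_n)|_{J^c}=\eta_n|_{J^c}.$$
The same holds with the roles of $J$ and $J^c$ swapped. Also, every $a_n$ with $n\geq m_{\mathbf{a}}$ lies in $\mathbf{a}_\infty\subseteq\supp(\eta')$, by Lemma~\ref{trivial} and (\ref{alt}). So each step after $m_{\mathbf{a}}$ acts on exactly one of the two pieces and leaves the other fixed.

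Next, extract the subsequence of indices $n_1<n_2<\cdots$ (all $\geq m_{\mathbf{a}}$) with $a_{n_k}\in J$. By the splitting above, $\eta_{n_k}|_J=r_{a_{n_{k-1}}}\cdots r_{a_{n_1}}(\eta'|_J)$, and $(a_{n_k},\eta_{n_k}|_J)=(a_{n_k},\eta_{n_k})>0$. So $(a_{n_k})_k$ behaves exactly like a transitional sequence for $v:=\eta'|_J$, with simple roots in $J$. The one caveat is that $v$ need not lie in $U^*$. I will therefore use only the parts of the earlier theory that do not need $U^*$. Rerunning the proof of Lemma~\ref{pos}(i) and (ii), which only uses that each chosen simple root pairs positively with the current vector, shows that the roots $r_{a_{n_1}}\cdots r_{a_{n_{k-1}}}a_{n_k}$ are pairwise distinct positive roots lying in $\pos(v)$. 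Equivalently, one can use Lemma~\ref{tranPos} directly: each step with $(a_{n_k},\cdot)>0$ reduces $|\pos(\cdot)|$ by exactly one.

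It follows that the number of indices $k$ is at most $|\pos(v)|=|\pos(\eta_{m_{\mathbf{a}}}|_J)|<\infty$. So only finitely many $n\geq m_{\mathbf{a}}$ have $a_n\in J$. Now suppose some $b\in J\cap\mathbf{a}_\infty$. By the defining property of the separating index, $b=a_n$ for infinitely many $n\geq m_{\mathbf{a}}$, which contradicts the finiteness just shown. Hence $J\cap\mathbf{a}_\infty=\emptyset$.

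The main obstacle is the justification of the splitting identities. It rests on the orthogonality between $J$ and $J^c$, which holds because $J$ is a union of components of the support graph, and on the fact that all $a_n$ with $n\geq m_{\mathbf{a}}$ lie inside $\supp(\eta')$. The other point needing care is that Lemma~\ref{tranPos} and Lemma~\ref{pos} apply to an arbitrary $v\in V$ rather than only to elements of $U^*$. Lemma~\ref{tranPos} is already stated for arbitrary $v\in V$, so the counting argument goes through as it stands.
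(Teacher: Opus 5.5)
Your proof is correct and follows the paper's argument. The paper also notes that each step with $a_i\in J$ lowers $|\pos(\eta_i|_J)|$ by exactly one (Lemma~\ref{tranPos}), so only finitely many such steps can occur, which contradicts $J\cap\mathbf{a}_\infty\neq\emptyset$; you make explicit the orthogonal splitting $\eta_n=\eta_n|_J+\eta_n|_{J^c}$ that the paper's short proof uses implicitly (and your $U^*$ caveat is harmless, since $\eta_{m_{\mathbf{a}}}|_J\in U^*_J\subseteq U^*$ by the argument of Lemma~\ref{int}, which does not need isotropy, together with Lemma~\ref{dual}).
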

\begin{proof}
Suppose, for a contradiction, that $J\cap \mathbf{a}_{\infty}\neq \emptyset$ and we may choose $a\in J\cap \mathbf{a}_{\infty}$. Then by the 
definition of $\mathbf{a}=(a_i)_{i\in \N}$, 
\begin{equation}
\label{AJ}
(a, \eta_i|_J)>0 \quad\text{for infinitely many $i\in \N$.}
\end{equation} 
Since $|\pos(r_a x)|=|\pos(x)|-1$ for all $x\in V$ whenever $a\in \Pi\cap \pos(x)$, it follows from the definition of $J$ that there exists some $k\in \N$ with $\pos(\eta_k|_J)=\emptyset$, contradicting (\ref{AJ}).
\end{proof}

\begin{definition}
For $I\subset \Pi$, let $W_I:=\langle r_a\mid a\in I\rangle$ be the
\emph{standard parabolic subgroup} corresponding to $I$. Let
$$U^*_I:=\bigcap_{w\in W_I} w(\R_{\geq 0}I)$$
be the \emph{dual of the Tits cone} associated with the standard parabolic subgroup $W_I$. 
\end{definition}

\begin{lemma}
\label{dual}
$$U^*_I\subseteq U^*, \,\text{for all $I\subseteq \Pi$.}$$
\end{lemma}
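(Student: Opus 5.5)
The plan is to reduce the statement to the standard factorisation of $W$ into minimal left coset representatives times elements of $W_I$. Fix $\eta\in U^*_I$. By definition, $\eta\in w(\R_{\geq 0}I)$ for every $w\in W_I$. Since $W_I$ is a group, this is equivalent to
$$v\eta\in\R_{\geq0}I\subseteq \PLC(\Pi)\cup\{0\}\qquad\text{for all } v\in W_I.$$
We must show that $w\eta\in\PLC(\Pi)\cup\{0\}$ for every $w\in W$. Since $W$ is a group, this is the same as $\eta\in w(\PLC(\Pi)\cup\{0\})$ for all $w\in W$, that is, $\eta\in U^*$.

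Given an arbitrary $w\in W$, we write $w=uv$ with $v\in W_I$ and $u$ the unique element of minimal length in the coset $wW_I$. This is the standard parabolic coset decomposition; see \cite{BN68, ABFB, HM}. Minimality of $u$ gives $\ell(ur_a)>\ell(u)$ for every $a\in I$. By the standard length/root criterion, which is the same one used in the proofs of Lemma~\ref{pos&conn} and Lemma~\ref{pos}, this forces
$$ua\in\Phi^+ \qquad\text{for all } a\in I.$$

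Now write $v\eta=\sum_{a\in I}\lambda_a a$, where every $\lambda_a\geq 0$; this is possible because $v\eta\in\R_{\geq0}I$. Then
$$w\eta=u(v\eta)=\sum_{a\in I}\lambda_a\,(ua).$$
This is a non-negative combination of positive roots, each of which lies in $\PLC(\Pi)$. Hence $w\eta\in\PLC(\Pi)\cup\{0\}$. Here $w\eta=0$ occurs exactly when all $\lambda_a=0$, and then $\eta=0$ trivially lies in $U^*$. Because $w$ was arbitrary, we conclude $\eta\in U^*$.

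The only non-formal ingredient is the existence of the minimal coset representative $u$ together with the positivity $ua\in\Phi^+$ for $a\in I$. I expect this to be the one point needing justification. I would simply cite it as standard Coxeter theory. If a self-contained argument is preferred, one can argue as follows: choose $u\in wW_I$ of minimal length. If some $a\in I$ had $ua\in\Phi^-$, then $\ell(ur_a)<\ell(u)$, while $ur_a$ still lies in $wW_I$. This contradicts the minimality of $u$. No further difficulties are expected.
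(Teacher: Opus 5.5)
Your proof is correct and follows essentially the same route as the paper. Both arguments factor an arbitrary $w\in W$ as $w=uv$ with $v\in W_I$ and $u$ sending every $a\in I$ to a positive root, then write $v\eta$ as a non-negative combination of elements of $I$ and apply $u$. The only difference is cosmetic: the paper argues by contradiction, while you argue directly and also justify the standard fact that the minimal coset representative satisfies $ua\in\Phi^+$ for all $a\in I$.
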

\begin{proof}
Suppose, for a contradiction, that there exists some $x\in U^*_I$ and yet $x\notin U^*$. Note that, in particular, $x\neq 0$. Then there must exist some $w\in W\setminus W_I$ with $wx\notin \PLC(\Pi)$. Write $w=d w_I$ with $w_I\in W_I$ and $d\in W$ with the property that $da\in \Phi^+$ for all $a\in I$. Note that under this condition we have $\ell(w)=\ell(d)+\ell(w_I)$.
Now since $x\in U^*_I$, it follows that $w_I x\in \PLC(I)$, and thus
$$w_I x =\sum_{a\in I} \lambda_a a, \,\text{where $\lambda_a \geq 0$ for all $a\in I$.}$$
Then $wx =dw_I x= d(\underset{a\in I}{\sum} \lambda_a a)=\underset{a\in I}{\sum}\lambda_a d(a)\in \PLC(\Pi)$, a contradiction as required. 
\end{proof}

\begin{lemma}
\label{int}
Suppose that $x\in Q\cap U^*$, and suppose that $I_1, \ldots, I_n$ are the connected components of $\supp(x)$.
Write $x_i:=x|_{I_i}$ (the restriction of $x$ on the subspace $\R I_i$) for all $i\in \{1, \ldots, n\}$.  Then 
$$x_i\in U^*_{I_i}\cap Q, \,\text{for all $i\in \{1, \ldots, n\}$.}$$
\end{lemma}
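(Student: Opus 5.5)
The plan is to use the orthogonal decomposition $x=x_1+\cdots+x_n$ given by the connected components, show that each summand is fixed by the other parabolic factors and so lies in the dual Tits cone of its own component, and then use Proposition~\ref{leq0} to split the isotropy condition.

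First, the geometric input. Distinct components $I_i$ and $I_j$ have no edges between them in the Coxeter graph, so $(a,b)=0$ for all $a\in I_i$ and $b\in I_j$ with $i\neq j$. Consequently:
\begin{itemize}
\item[(a)] $(x_i,x_j)=0$ whenever $i\neq j$;
\item[(b)] for every $a\in I_i$ and $j\neq i$ we have $r_a x_j=x_j-2(a,x_j)a=x_j$, so $W_{I_i}$ fixes each $x_j$ with $j\neq i$.
\end{itemize}

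Next, I show $x_i\in U^*_{I_i}$. Take $w\in W_{I_i}$. By (b),
$$wx=wx_i+\sum_{j\neq i}x_j,$$
where $wx_i\in\R I_i$ and each $x_j$ with $j\neq i$ lies in $\R I_j$. Since $x\in U^*$, we have $wx\in\PLC(\Pi)\cup\{0\}$. Because $\Pi$ is a basis and the $I_j$ are pairwise disjoint, the coefficients of $wx$ on $I_i$ are exactly the coefficients of $wx_i$. These are therefore all non-negative, so $wx_i\in\R_{\geq0}I_i$. As $w\in W_{I_i}$ was arbitrary, $x_i\in U^*_{I_i}$.

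By Lemma~\ref{dual} it follows that $x_i\in U^*$ for every $i$. Proposition~\ref{leq0}, applied with $u=v=x_i$, then gives $(x_i,x_i)\leq0$.

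Finally, the isotropy. By (a),
$$0=(x,x)=\sum_{i=1}^n(x_i,x_i),$$
a sum of non-positive terms. Hence every term vanishes, so $x_i\in Q$ for each $i$, which completes the proof.

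The only point needing care is the coefficient comparison in the second step. It relies on $\Pi$ being a basis and the $I_j$ being disjoint, so that the part of $wx$ supported on $I_i$ is exactly $wx_i$. Once that is in place, the rest follows directly from Proposition~\ref{leq0}.
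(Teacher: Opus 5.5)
Your proof is correct and follows the paper's argument. You show $wx_i\in\R_{\geq0}I_i$ for every $w\in W_{I_i}$, get $(x_i,x_i)\leq0$ from Proposition~\ref{leq0}, and split $0=(x,x)=\sum_i(x_i,x_i)$. The only differences are minor: you spell out why $W_{I_i}$ fixes the other summands, which the paper leaves implicit, and you go through Lemma~\ref{dual} to apply Proposition~\ref{leq0} in $U^*$, whereas the paper applies it directly to the parabolic system $W_{I_i}$.
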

\begin{proof}
Let $w_i\in W_{I_i}$ be arbitrary. Then 
$$w_i x_i \in \PLC(\Pi) \cap \R I_i =\PLC(I_i).$$
Thus $x_i \in U^*_{I_i}$, and Proposition~\ref{leq0} applied to $U^*_{I_i}$ yields that
$$(x_i, x_i)\leq 0.$$
On the other hand 
$$0=(x, x)=\sum_{i=1}^n (x_i, x_i),$$
and thus $(x_i, x_i)=0$, for all $i\in \{1, \ldots, n\}$. Whence
$$x_i\in U^*_{I_i}\cap Q \subseteq U^*\cap Q.$$
\end{proof}

\begin{proposition}
\label{affcomp}
Let $\eta\in U^*\cap Q$ with $\pos(\eta)\neq \emptyset$. Let $\mathbf{a}=(a_i)_{1\leq i\leq |\pos(\eta)|}$ be a transitional sequence for $\eta$ with corresponding separating index $m_{\mathbf{a}}$ and descending sequence $(\eta_i)_{1\leq i\leq |\pos(\eta)|+1}$ converging to the limit $\eta_{\mathbf{a}}$. Moreover, suppose that $\pos(\eta_{\mathbf{a}})=\emptyset$, and suppose that $I$ is a connected component of $\supp(\eta_{\mathbf{a}})$. Then $W_I =\langle r_a\mid a \in I\rangle$  is an irreducible affine standard parabolic subgroup of $W$. Furthermore, suppose that $I_1, \ldots I_k$ form a full set of connected components of $\supp(\eta_{m_{\mathbf{a}}})$ such that  
$$|\pos(\eta_{m_{\mathbf{a}}}|_{I_i})|<\infty, \quad\text{for each $i\in \{1, \ldots, k\}$},$$
where $\cdot|_{I_i}$ denotes the restriction of $\cdot$ to $\R I_i$. 
Write $x:=\sum_{i=1}^k (\eta_{m_{\mathbf{a}}}|_{I_i})$. Then 
$$x=\eta_{\mathbf{a}},$$
and, in particular, $\eta_{\mathbf{a}}\in \mathscr{K}$, and $\supp(\eta_{\mathbf{a}}) \cap \mathbf{a}_{\infty}=\emptyset$.
\end{proposition}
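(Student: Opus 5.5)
Throughout, write $\eta_{\mathbf a}=\sum_{a}\lambda_a a$ with all $\lambda_a\ge0$.

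\emph{Step 1: $\eta_{\mathbf a}$ is isotropic.} Each $\eta_i$ lies in the $W$-orbit of $\eta$ and $(\cdot,\cdot)$ is $W$-invariant, so $(\eta_i,\eta_i)=(\eta,\eta)=0$. Passing to the limit (the finite case being trivial) gives $\eta_{\mathbf a}\in Q$. Since $U^*$ is closed, $\eta_{\mathbf a}\in U^*\cap Q$. The hypothesis $\pos(\eta_{\mathbf a})=\emptyset$ gives $(a,\eta_{\mathbf a})\le 0$ for all $a\in\Pi$, so $\eta_{\mathbf a}\in\mathscr K$.

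\emph{Step 2: each component $I$ is affine.} Let $y=\eta_{\mathbf a}|_I$, so that $\supp(y)=I$ with all coefficients positive. For $a\in I$, the vertices of $\supp(\eta_{\mathbf a})\setminus I$ are not adjacent to $a$. Hence $(a,y)=(a,\eta_{\mathbf a})\le0$. By Lemma~\ref{int}, $(y,y)=0$. Then
\[
0=(y,y)=\sum_{a\in I}\lambda_a(a,y),
\]
and every term is $\le0$ with every $\lambda_a>0$. Therefore $(a,y)=0$ for all $a\in I$. So the Gram matrix of $I$ has a strictly positive null vector, and $I$ is connected. By the standard Perron--Frobenius/Vinberg classification, $(\cdot,\cdot)|_{\R I}$ is then positive semidefinite with one-dimensional radical spanned by $y$. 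Thus $W_I$ is irreducible affine.

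\emph{Step 3: identification $x=\eta_{\mathbf a}$.} If $\pos(\eta)$ is finite, take $m_{\mathbf a}=|\pos(\eta)|+1$. Then $\eta_{m_{\mathbf a}}=\eta_{\mathbf a}$ has empty $\pos$, every component is of finite type, $x=\eta_{\mathbf a}$, and $\mathbf a_\infty=\emptyset$ by convention.

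Now assume $|\pos(\eta)|=\infty$. By Lemma~\ref{stn}, the components of $\supp(\eta_{m_{\mathbf a}})$ are fixed for $n\ge m_{\mathbf a}$. By Lemma~\ref{empty}, $I_i\cap\mathbf a_\infty=\emptyset$ for each $i\le k$. Hence, by Lemma~\ref{easy}, the coefficients on $\bigcup_i I_i$ are frozen, and $\eta_{\mathbf a}|_{\bigcup I_i}=x$.

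For the other components $J$ of $\supp(\eta_{m_{\mathbf a}})$ (those with $|\pos(\eta_{m_{\mathbf a}}|_J)|=\infty$), I claim $\eta_{\mathbf a}|_J=0$. By Proposition~\ref{component}, $\supp(\eta_{\mathbf a})$ is a union of such components. Suppose $J\subseteq\supp(\eta_{\mathbf a})$. The reflections $r_{a_i}$ with $a_i\in J$ act on $\eta_n|_J$ only within $\R J$, because $J$ is a component. Hence $\eta_n|_J$ is $W_J$-conjugate to $\eta_{m_{\mathbf a}}|_J$, and it still has infinite $\pos$ inside $\Phi_J$ by Remark~\ref{ts} applied in $W_J$ (using Lemma~\ref{int}-type restriction, $\eta_{m_{\mathbf a}}|_J\in U^*_J$).

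On the other hand, $\eta_{\mathbf a}|_J$ is an isotropic positive null vector of $J$ by Step 2. By Step 2, $J$ is affine and its Gram form is semidefinite with radical $\R\,\eta_{\mathbf a}|_J$. Moreover $\eta_n|_J\to\eta_{\mathbf a}|_J$, and $U^*_J$ for an affine $W_J$ is exactly the ray $\R_{\ge0}\delta$ of the null root. Indeed, any $v\in U^*_J$ has $(v,v)\le0$ by Proposition~\ref{leq0}, and $(v,v)\ge 0$ by semidefiniteness, so $v$ lies in the radical. Hence $\eta_{m_{\mathbf a}}|_J$ is a multiple of $\delta$, which has $\pos=\emptyset$ in $\Phi_J$. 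This contradicts the infiniteness of $\pos(\eta_{m_{\mathbf a}}|_J)$.

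So $\eta_{\mathbf a}=x$. The remaining claims follow: $\eta_{\mathbf a}\in\mathscr K$ by Step 1, and $\supp(\eta_{\mathbf a})\cap\mathbf a_\infty=\emptyset$ because $\supp(\eta_{\mathbf a})\subseteq\bigcup I_i$.

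\emph{Main obstacle.} The delicate point is the last one: justifying that the $J$-restriction behaves like a $W_J$-orbit inside $U^*_J$, and hence must be a multiple of $\delta$. One has to check that $\eta_{m_{\mathbf a}}|_J\in U^*_J$ (restriction to a component commutes with the $W_J$-action and preserves positivity). One also has to check that the semidefiniteness argument applies before taking limits. If that fails, I would instead use Lemma~\ref{form0} on the restricted sequence together with $(\eta_n|_J,\eta_n|_J)\le 0$ and an $\ell^1$-decrease argument.
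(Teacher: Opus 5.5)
Your proof is correct and follows essentially the paper's route. Isotropy of $\eta_{\mathbf a}$ together with $\pos(\eta_{\mathbf a})=\emptyset$ gives a positive null vector on each component, hence affine type. Lemma~\ref{int} then places $\eta_{m_{\mathbf a}}|_J$ in the isotropic part of $U^*_J$, which is the one-dimensional radical, so $\eta_{m_{\mathbf a}}|_J$ is a positive multiple of $\eta_{\mathbf a}|_J$ with empty $\pos$, and Lemmas~\ref{empty} and~\ref{easy} freeze the coefficients on $\bigcup_i I_i$. The only differences are that you phrase the inclusion $\supp(\eta_{\mathbf a})\subseteq\bigcup_i I_i$ as a contradiction rather than directly, and that your detour through $W_J$-conjugacy of $\eta_n|_J$ and $\pos$ inside $\Phi_J$ is unnecessary, since that multiple of $\eta_{\mathbf a}|_J$ already lies in $\mathscr K$ and so has empty $\pos$ in all of $\Phi$.
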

\begin{proof}
First, if $|\pos(\eta)|<\infty$, then by the definition of the limit of $\eta$ via a finite transitional sequence $\mathbf{a}=(a_i)_{1\leq i\leq |\pos(\eta)|}$ we have
$$\eta_{\mathbf{a}}=\eta_{m_{\mathbf{a}}}=\eta_{|\pos(\eta)|+1}\in Q\cap\mathscr{K},$$ and the claimed result holds. Thus, we may assume that $|\pos(\eta)|=\infty$.

Note that since $\eta\in Q$, it follows that $\eta_i\in Q$ for all $i\in \N$, and thus
\begin{equation}
\label{e1}
(\eta_{\mathbf{a}}, \eta_{\mathbf{a}})= (\lim_{i\to \infty}\eta_i\,,\lim_{i\to \infty}\eta_i)=\lim_{i\to\infty}(\eta_i\,, \eta_i)
                                                                                         =(\eta, \eta)=0.
\end{equation}
Also, note that the assumption of $\pos(\eta_{\mathbf{a}})=\emptyset$ implies that $(\eta_{\mathbf{a}}, a)\leq 0$ for all $a\in \Pi$, and in view of (\ref{e1}) we deduce that
\begin{equation}
\label{e2}
(\eta_{\mathbf{a}}, a)=0,\,\, \text{ for all $a\in \supp(\eta_{\mathbf{a}})$}.
\end{equation}
Now set $\eta_I:=\eta_{\mathbf{a}}|_I$. Note that if $b\in \supp(\eta_{\mathbf{a}})\setminus I$ and $a\in I\subseteq \supp(\eta_{\mathbf{a}})$, then $I$ being a connected component implies that $(b, a)=0$, and this together with (\ref{e2}) in turn yields that
\begin{equation}
(\eta_I, a) =(\eta_{\mathbf{a}}, a)=0, \,\,\text{for all $a\in I\subseteq \supp(\eta_{\mathbf{a}})$,}
\end{equation}
since $\eta_{\mathbf{a}}=\eta_I+\sum_{b\in \supp(\eta_{\mathbf{a}})\setminus I} \lambda_b b$ for some $\lambda_b \geq 0$.
Whence
\begin{equation}
\label{e3}
\eta_I \in \PLC(I)\cap \rad(\R I),
\end{equation}
and so \cite[Lemma 6.1.1]{DK94}  and \cite[Lemma 6.1.1]{DK09} yield that $W_I$ is an irreducible affine standard parabolic subgroup.
Note that then 
$$\pos(\eta_I)=\emptyset.$$ 
On the other hand, because $I$ is a connected component of $\supp(\eta_{\mathbf{a}})$, and $\supp(\eta_{\mathbf{a}})$ is a union of connected components of $\supp(\eta_{m_{\mathbf{a}}})$ by Proposition~\ref{component}, therefore, $I$ is a connected component of $\supp(\eta_{m_{\mathbf{a}}})$.
Now set $y:= \eta_{m_{\mathbf{a}}} |_I$.
Then Lemma~\ref{int} yields that $y\in U^*_I\cap Q$, and, in particular, 
$y\in \PLC(I)\cap Q$. Since $W_I$ is irreducible affine, it follows from \cite[Proposition 2.6]{HM} that 
$$\rad(\R I) = Q\cap \R I,$$
with $\dim(\rad(\R I))=1$, and therefore $y=\lambda\eta_I$ for some $\lambda >0$.
Consequently, 
$$\pos(y)=\emptyset \text{ and }\supp(y)=\supp(\eta_I)=I,$$
and hence $I=\supp(y)\subseteq \supp(x)$ by the definition of $x$ in the statement of this proposition. Thus we have established that any 
connected component of $\supp(\eta_{\mathbf{a}})$ is contained in $\supp(x)$, and therefore 
\begin{equation}
\label{e4}
\supp(\eta_{\mathbf{a}})\subseteq\supp(x).
\end{equation}
Now clearly $|\pos(x)|<\infty$, and hence Lemma~\ref{empty} yields that
\begin{equation}
\label{noainf}
\supp(x)\cap \mathbf{a}_{\infty}=\emptyset.
\end{equation}
Now Lemma~\ref{easy} and (\ref{noainf}) together imply that all the coefficients $\coeff_a(\eta_i)$ of those simple roots 
$a\in \supp(x)$ remain unchanged along the descending sequence once $i\geq m_{\mathbf{a}}$, and this means that $\eta_{\mathbf{a}}=x$.

Now having proved that $\eta_{\mathbf{a}}=x$ it is clear from (\ref{noainf}) (and also follows from combining (\ref{e4}) and (\ref{noainf})) that 
\begin{equation*}
\label{eqfin}
\supp(\eta_{\mathbf{a}}) \cap \mathbf{a}_{\infty}=\emptyset.
\end{equation*}

Finally, under the assumption of $\pos(\eta_{\mathbf{a}})=\emptyset$, we see immediately that $x=\eta_{\mathbf{a}}\in \mathscr{K}$.

\end{proof}

\begin{proposition}
  \label{zero}
  Let $\eta\in U^*\cap Q$ with $|\pos(\eta)|=\infty$. Let $\mathbf{a}=(a_i)_{i\in \N}$ be a transitional sequence for $\eta$ with $m_{\mathbf{a}}$ being a separating index, and let $\eta_{\mathbf{a}}$ be the limit of $\eta$ via $\mathbf{a}=(a_i)_{i\in \N}$.
  Suppose that $\supp(\eta_{m_{\mathbf{a}}})$ is connected and $\pos(\eta_{\mathbf{a}})=\emptyset$. Then 
  $$\eta_{\mathbf{a}}=0.$$ 
  Moreover, $\mathbf{a}_{\infty}=\supp(\eta_{m_{\mathbf{a}}})$.
\end{proposition}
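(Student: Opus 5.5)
The plan is to derive both assertions directly from Proposition~\ref{affcomp}, using connectedness of $\supp(\eta_{m_{\mathbf{a}}})$ to force the sum $x$ there to be empty. The hypotheses of Proposition~\ref{affcomp} hold here. We have $\eta\in U^*\cap Q$, and $\pos(\eta)\neq\emptyset$ because it is infinite. We are also given $\pos(\eta_{\mathbf{a}})=\emptyset$. So the proposition applies, and it gives $\eta_{\mathbf{a}}=x$, where $x$ is the sum of the restrictions $\eta_{m_{\mathbf{a}}}|_{I_i}$ over those connected components $I_i$ of $\supp(\eta_{m_{\mathbf{a}}})$ with $|\pos(\eta_{m_{\mathbf{a}}}|_{I_i})|<\infty$.

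Set $J:=\supp(\eta_{m_{\mathbf{a}}})$. Since $J$ is connected, it is the only connected component, and $\eta_{m_{\mathbf{a}}}|_J=\eta_{m_{\mathbf{a}}}$. Now $\eta_{m_{\mathbf{a}}}=r_{a_{m_{\mathbf{a}}-1}}\cdots r_{a_1}\eta$ lies in the $W$-orbit of $\eta$. Hence $|\pos(\eta_{m_{\mathbf{a}}})|=\infty$ by Remark~\ref{ts}, so $J$ does not qualify as one of the $I_i$. The list $I_1,\ldots,I_k$ is therefore empty, $x$ is the empty sum $0$, and so $\eta_{\mathbf{a}}=0$.

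For the second assertion, Lemma~\ref{trivial} (with $i=m_{\mathbf{a}}$) gives $\mathbf{a}_{\infty}\subseteq\supp(\eta_{m_{\mathbf{a}}})$. Conversely, suppose $a\in\supp(\eta_{m_{\mathbf{a}}})\setminus\mathbf{a}_{\infty}$. Lemma~\ref{easy} gives $\coeff_a(\eta_i)=\coeff_a(\eta_{m_{\mathbf{a}}})>0$ for all $i\geq m_{\mathbf{a}}$. Passing to the limit gives $\coeff_a(\eta_{\mathbf{a}})>0$, which contradicts $\eta_{\mathbf{a}}=0$. Hence $\mathbf{a}_{\infty}=\supp(\eta_{m_{\mathbf{a}}})$.

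The only delicate point is the reading of Proposition~\ref{affcomp} when no component has finite positive set. There one must check that its proof still yields $\eta_{\mathbf{a}}=x=0$. It does: the inclusion (\ref{e4}) gives $\supp(\eta_{\mathbf{a}})\subseteq\supp(x)=\emptyset$. This can also be argued directly. Any component $I$ of $\supp(\eta_{\mathbf{a}})$ would be a component of $J$, hence equal to $J$ by connectedness. The proof of Proposition~\ref{affcomp} would then give $\eta_{m_{\mathbf{a}}}=y=\lambda\eta_I$ with $\pos(y)=\emptyset$, which contradicts $|\pos(\eta_{m_{\mathbf{a}}})|=\infty$.
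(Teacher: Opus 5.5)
Your proposal is correct and follows essentially the paper's route: both rest on Proposition~\ref{affcomp} combined with $|\pos(\eta_{m_{\mathbf{a}}})|=\infty$, and both obtain $\mathbf{a}_{\infty}=\supp(\eta_{m_{\mathbf{a}}})$ from Lemmas~\ref{easy} and~\ref{trivial}. The only difference is that the paper argues by contradiction. Assuming $\eta_{\mathbf{a}}\neq 0$ forces $\supp(\eta_{\mathbf{a}})=\supp(\eta_{m_{\mathbf{a}}})$, hence $\eta_{\mathbf{a}}=\eta_{m_{\mathbf{a}}}$ and $|\pos(\eta)|\leq m_{\mathbf{a}}$; this sidesteps the empty-sum case of Proposition~\ref{affcomp}, which you checked correctly by hand.
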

\begin{proof}
Suppose for a contradiction that $\eta_{\mathbf{a}}\neq 0$. We have seen from Proposition~\ref{component} that $\supp(\eta_{\mathbf{a}})$ is a union of connected components of $\supp(\eta_{m_{\mathbf{a}}})$. Thus 
$$I:=\supp(\eta_{\mathbf{a}})=\supp(\eta_{m_{\mathbf{a}}}),$$
and Proposition~\ref{affcomp} establishes that $W_I:=\langle r_a\mid a\in I\rangle$ is an irreducible affine standard parabolic subgroup of $W$, and 
$$\eta_{\mathbf{a}}=\eta_{m_{\mathbf{a}}}.$$
But then 
$$|\pos(\eta)|\leq |\pos(\eta_{m_{\mathbf{a}}})|+m_{\mathbf{a}}=|\pos(\eta_{\mathbf{a}})|+m_{\mathbf{a}}=m_{\mathbf{a}}<\infty,$$
contradicting the assumption $|\pos(\eta)|=\infty$.

Having established that $\eta_{\mathbf{a}}=0$, it then follows readily that for each $a\in \supp(\eta_{m_{\mathbf{a}}})$, the coefficient of $a$ in the successive $\eta_i$ ($i\geq m_{\mathbf{a}}$) cannot stay the same. Now recall Lemma~\ref{easy} that for all $a\in \supp(\eta_{m_{\mathbf{a}}})\setminus \mathbf{a}_{\infty}$ we have 
$$\coeff_a(\eta_i)=\coeff_a(\eta_{m_{\mathbf{a}}}), \quad \forall i\geq m_{\mathbf{a}},$$
and consequently we have $\supp(\eta_{m_{\mathbf{a}}})\setminus \mathbf{a}_{\infty}=\emptyset$. Since $\mathbf{a}_{\infty}\subseteq \supp(\eta_i)$ for all $i\in \N$, it follows that $\mathbf{a}_{\infty}=\supp(\eta_{m_{\mathbf{a}}})$.
\end{proof}

\begin{proposition}
Let $\eta\in U^*\cap Q$ with $\pos(\eta)\neq\emptyset$. Let $\mathbf{a}=(a_i)_{1\leq i\leq |\pos(\eta)|}$ be a transitional sequence for $\eta$ with $m_{\mathbf{a}}$ being a separating index. Moreover, suppose that $\eta_{\mathbf{a}}$ is the corresponding limit with  $\pos(\eta_{\mathbf{a}})=\emptyset$. Then for all $w\in W$
$$w\eta -\eta_{\mathbf{a}}\in \PLC(\Pi)\cup\{0\}.$$
Furthermore, 
$$|\eta_{\mathbf{a}}|_1 =\inf_{w\in W} |w\eta|_1.$$
\end{proposition}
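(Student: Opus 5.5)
Since $\pos(\eta_{\mathbf{a}})=\emptyset$, Lemma~\ref{pos&conn}(i) gives $w'\eta_{\mathbf{a}}-\eta_{\mathbf{a}}\in\PLC(\Pi)\cup\{0\}$ for every $w'\in W$. So the task is to compare $w\eta$ with the $W$-orbit of $\eta_{\mathbf{a}}$. The plan splits according to whether $|\pos(\eta)|$ is finite.

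\emph{Finite case.} If $|\pos(\eta)|<\infty$, then $\eta_{\mathbf{a}}=u\eta$ with $u=r_{a_{m-1}}\cdots r_{a_1}$, where $m=m_{\mathbf{a}}$. Hence $w\eta=(wu^{-1})\eta_{\mathbf{a}}$, and Lemma~\ref{pos&conn}(i) immediately gives $w\eta-\eta_{\mathbf{a}}\in\PLC(\Pi)\cup\{0\}$. Taking $|\cdot|_1$, which is linear and non-negative on $\PLC(\Pi)\cup\{0\}$, yields $|w\eta|_1\ge|\eta_{\mathbf{a}}|_1$. Equality in the infimum is attained at $w=u$.

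\emph{Infinite case.} Now assume $|\pos(\eta)|=\infty$. Write $\eta_{\mathbf{a}}=\lim_i u_i\eta$ with $u_i:=r_{a_{i-1}}\cdots r_{a_1}$. Fix $w\in W$. Then
\[
w\eta-\eta_{\mathbf{a}}=\lim_{i\to\infty}\bigl(w\eta-u_i\eta\bigr).
\]
Since $\PLC(\Pi)\cup\{0\}$ is closed, it suffices to show $w\eta-u_i\eta\in\PLC(\Pi)\cup\{0\}$ for all large $i$. The key step is to show that $\eta$ is eventually ``as reduced as possible'' along the descending sequence. Specifically, I would show that $w\eta-u_i\eta\in\PLC(\Pi)\cup\{0\}$ whenever no root of $N(u_iw^{-1})$ pairs positively with $u_i\eta$ in an obstructing way. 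Concretely, I would write $w\eta-u_i\eta=(wu_i^{-1}-1)(u_i\eta)$ and expand $wu_i^{-1}=r_{b_k}\cdots r_{b_1}$ reduced. This gives
\[
(wu_i^{-1}-1)v=-2\sum_j (v,\beta_j)\,\gamma_j,
\]
where $\beta_j\in N(u_iw^{-1})$ and $\gamma_j=wu_i^{-1}\beta_j\in\Phi^-$. This is the same telescoping as in Lemma~\ref{pos&conn}(i). The sum lies in $\PLC(\Pi)\cup\{0\}$ precisely when the contributions from $\beta_j\in\pos(u_i\eta)$ are absorbed. So I need the positive pairings $(u_i\eta,\beta)$, for $\beta$ in an inversion set related to $w$, to tend to zero.

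\emph{Controlling the error.} Here I would use that $\eta_{\mathbf{a}}$ has empty $\pos$ and that $(u_i\eta,\beta)\to(\eta_{\mathbf{a}},\beta)\le0$. More precisely, I would take the decomposition in the opposite direction:
\[
w\eta-\eta_{\mathbf{a}}=w u_i^{-1}(u_i\eta-\eta_{\mathbf{a}})+(wu_i^{-1}\eta_{\mathbf{a}}-\eta_{\mathbf{a}}).
\]
The second term lies in $\PLC(\Pi)\cup\{0\}$, and has bounded support behaviour, by Lemma~\ref{pos&conn}(i). The first term is $wu_i^{-1}$ applied to $u_i\eta-\eta_{\mathbf{a}}$, which lies in $\PLC(\Pi)\cup\{0\}$ and tends to $0$. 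The difficulty is that $wu_i^{-1}$ has unbounded length, so its action on this small vector is not obviously controlled.

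This is the main obstacle. To handle it, I would use the isotropy of $\eta$ together with Proposition~\ref{zero} and Proposition~\ref{affcomp}. These reduce to connected components $J$ of $\supp(\eta_{m_{\mathbf{a}}})$. On components $J\subseteq\mathbf{a}_\infty$ the limit is $0$, and on the remaining components the limit lies in $\mathscr K$ and is eventually fixed. Then, componentwise, I would argue via Lemma~\ref{int} and Lemma~\ref{dual} that $w\eta|$ restricted appropriately dominates the corresponding piece of $\eta_{\mathbf{a}}$. On components where $\eta_{\mathbf{a}}$ vanishes this is trivial, since $w\eta\in U^*$ has non-negative coefficients. On the affine components carrying $\eta_{\mathbf{a}}$, I would use that the piece is a radical vector fixed by $W_I$ and compare coefficients using Lemma~\ref{pos&conn}(i).

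\emph{The infimum.} Finally, $|w\eta|_1\ge|\eta_{\mathbf{a}}|_1$ for all $w$, while $|u_i\eta|_1\downarrow|\eta_{\mathbf{a}}|_1$ by Remark~\ref{lim}. Together these give the infimum statement.
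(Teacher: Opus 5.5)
\textbf{There is a gap in your infinite case.} Your finite case and your infimum argument are fine, and your second decomposition
\[
w\eta-\eta_{\mathbf a}=wu_i^{-1}(u_i\eta-\eta_{\mathbf a})+(wu_i^{-1}\eta_{\mathbf a}-\eta_{\mathbf a})
\]
is the right one. However, you have misidentified the obstacle, and the step that closes the argument is missing.

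You do not need to let $i\to\infty$; fix $i=m_{\mathbf a}$. What matters is not that $u_i\eta-\eta_{\mathbf a}$ is small. What matters is that $x':=\eta_{m_{\mathbf a}}-\eta_{\mathbf a}$ lies in $U^*$. Then $wu_{m_{\mathbf a}}^{-1}x'\in U^*\subseteq\PLC(\Pi)\cup\{0\}$ for every $w$, however long. Combined with Lemma~\ref{pos&conn}(i) for the second term, this finishes the proof.

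This is exactly what the paper does. Let $M$ be the union of the components $I$ of $J:=\supp(\eta_{m_{\mathbf a}})$ with $|\pos(\eta_{m_{\mathbf a}}|_I)|<\infty$. By Proposition~\ref{affcomp}, $\eta_{\mathbf a}=\eta_{m_{\mathbf a}}|_M$. Hence $x'=\eta_{m_{\mathbf a}}|_{J\setminus M}$ is the restriction of $\eta_{m_{\mathbf a}}\in U^*\cap Q$ to a union of connected components of its support. It therefore lies in $U^*$ by Lemma~\ref{int}, Lemma~\ref{dual} and convexity of $U^*$.

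Your componentwise sketch does not supply this fact. As written, it tries to show that the restriction of $w\eta$ to each component dominates the corresponding piece of $\eta_{\mathbf a}$. On the components carrying $\eta_{\mathbf a}$, it appeals to that piece being a radical vector fixed by $W_I$. But $w$ is an arbitrary element of $W$: it need not preserve the component decomposition, and it need not lie in $W_I$, so $W_I$-invariance is irrelevant. The real issue at $a\in M$ is the cross term $\coeff_a(wu_{m_{\mathbf a}}^{-1}x')$ contributed by the other components, and your sketch never shows it is non-negative. The splitting must be done on $\eta_{m_{\mathbf a}}$ before applying $w$, and non-negativity of the cross term is precisely the statement $x'\in U^*$.

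Separately, your first reduction is stronger than needed and can fail. That reduction asks for $w\eta-u_i\eta\in\PLC(\Pi)\cup\{0\}$ for all large $i$. Every $a\in\mathbf a_\infty$ has $\coeff_a(u_i\eta)>0$ for all $i$ by Lemma~\ref{trivial}. So the condition breaks for any $w$ with $\coeff_a(w\eta)=0$ at some $a\in\mathbf a_\infty$.
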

\begin{proof}
Let $J:=\supp(\eta_{m_{\mathbf{a}}})$. Let $I_1, \ldots, I_k$ be a full set of connected components of $J$ with 
$$|\pos(\eta_{m_{\mathbf{a}}}|_{I_i})|<\infty, \quad \forall i\in\{1, \ldots, k\},$$ 
and let $M:=\uplus_{i=1}^{k}I_i$.
Then Proposition~\ref{affcomp} yields that 
$$\eta_{\mathbf{a}}=\eta_{m_{\mathbf{a}}}|_M,$$
where $\cdot|_A$ again means the restriction of $\cdot$ to the subspace spanned by $A$.

Now set $x':=\eta_{m_{\mathbf{a}}}-\eta_{\mathbf{a}}=\eta_{m_{\mathbf{a}}}|_{J\setminus M}$.

Observe that Proposition~\ref{component} yields that $J\setminus M$ is a union of connected components when $|\pos(\eta)|=\infty$; whereas when $|\pos(\eta)|<\infty$ then it is readily seen that $J\setminus M =\emptyset$. Also observe that $\eta_{m_{\mathbf{a}}}\in U^*\cap Q$, since $\eta\in U^*\cap Q$. These two observations together with Lemma~\ref{int} yield that 
$$x'\in U^*_{J\setminus M}\cap Q \subseteq U^*\cap Q.$$ 
Thus $\forall a\in M$ and $\forall w\in W$
\begin{equation}
\label{>0}
\coeff_a(w x')\geq 0.
\end{equation}
On the other hand $\pos(\eta_{\mathbf{a}})=\emptyset$, and hence Lemma~\ref{pos&conn} yields that $\forall a\in \Pi$ and $\forall w\in W$
$$\coeff_a(w \eta_{\mathbf{a}})\geq \coeff_a(\eta_{\mathbf{a}}).$$
Thus, in particular, $\forall a\in M=\supp(\eta_{\mathbf{a}})$ and $\forall w\in W$
\begin{equation*}
  \coeff_a(w \eta_{m_{\mathbf{a}}})-\underbrace{\coeff_a(w x')}_\text{$\geq 0$ by (\ref{>0})}=\coeff_a(w \eta_{\mathbf{a}})\geq \coeff_a(\eta_{\mathbf{a}})
\end{equation*}

That is, $\forall a\in M$ and $\forall w\in W$
\begin{equation}
\label{ee1}
  \coeff_a(w \eta_{m_{\mathbf{a}}})\geq \coeff_a(\eta_{\mathbf{a}}).
\end{equation}
While since $\eta_{m_{\mathbf{a}}}\in U^*$, it follows that $\forall a\in \Pi\setminus M$ and $w\in W$
\begin{equation}
\label{ee2}
\coeff_a(w\eta_{m_{\mathbf{a}}})\geq 0=\coeff_a(\eta_{\mathbf{a}}).  
\end{equation}
Combining (\ref{ee1}) and (\ref{ee2}), we deduce that $\forall a\in \Pi$ and $\forall w\in W$
$$\coeff_a(w\eta_{m_{\mathbf{a}}}) \geq \coeff_a(\eta_{\mathbf{a}}).$$
But $\eta_{m_{\mathbf{a}}} =r_{a_{m_{\mathbf{a}}-1}}\cdots r_{a_1}\eta$, hence $\forall a\in\Pi$ and $\forall w\in W$
$$w\eta - \eta_{\mathbf{a}}\in \PLC(\Pi)\cup \{0\}.$$
In particular, 
$$|\eta_{\mathbf{a}}|_1\leq \inf_{w\in W} |w\eta|_1.$$
On the other hand, it follows from the definition of $\eta_{\mathbf{a}}$ that 
$$|\eta_{\mathbf{a}}|_1\geq \inf_{w\in W} |w\eta|_1,$$ 
whence equality.
\end{proof}

\begin{corollary}
\label{equal}
Suppose that $\eta\in U^*\cap Q$ with $\pos(\eta)\neq \emptyset$. Suppose that $\mathbf{a}=(a_i)_{1\leq i\leq |\pos(\eta)|}$ is a transitional sequence for $\eta$ with $m_{\mathbf{a}}$ being a separating index. Moreover, suppose that $\eta_{\mathbf{a}}$ is the corresponding limit with  $\pos(\eta_{\mathbf{a}})=\emptyset$.
Suppose that $\mathbf{b}=(b_i)_{1\leq i\leq |\pos(\eta)|}\in T(\eta)$ is yet another transitional sequence for $\eta$, giving rise to the limit $\eta_{\mathbf{b}}$. Then 
$$\eta_{\mathbf{b}}-\eta_{\mathbf{a}}\in \PLC(\Pi)\cup\{0\}.$$ 
Furthermore, if $\pos(\eta_{\mathbf{b}})=\emptyset$, then 
$$\eta_{\mathbf{a}}=\eta_{\mathbf{b}}.$$ 
\end{corollary}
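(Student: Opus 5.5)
The plan is to derive both assertions from the preceding proposition, which says that whenever the limit along a transitional sequence has empty positive set, that limit lies below every point of the $W$-orbit of $\eta$. The first assertion will follow from the fact that $\eta_{\mathbf b}$ is a limit of orbit points of $\eta$ and the positive cone is closed. The second will follow by applying the same proposition with $\mathbf a$ and $\mathbf b$ swapped.

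\textbf{First assertion.} Let $(\eta'_i)$ be the descending sequence associated with $\mathbf b$. Each $\eta'_i = r_{b_{i-1}}\cdots r_{b_1}\eta$ lies in the $W$-orbit of $\eta$. The preceding proposition, applied to $\mathbf a$ (using $\pos(\eta_{\mathbf a})=\emptyset$), therefore gives
$$\eta'_i-\eta_{\mathbf a}\in \PLC(\Pi)\cup\{0\}\quad\text{for every } i,$$
that is, $\coeff_c(\eta'_i)\geq \coeff_c(\eta_{\mathbf a})$ for every $c\in\Pi$.

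\begin{itemize}
\item If $|\pos(\eta)|<\infty$, then $\eta_{\mathbf b}=\eta'_{|\pos(\eta)|+1}$ is itself such an orbit point, and we are done.
\item If $|\pos(\eta)|=\infty$, then $\eta_{\mathbf b}=\lim_i\eta'_i$ by Remark~\ref{lim}. The inequalities $\coeff_c(\eta'_i)\geq \coeff_c(\eta_{\mathbf a})$ are non-strict, so they pass to the limit coefficientwise.
\end{itemize}

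In either case $\coeff_c(\eta_{\mathbf b}-\eta_{\mathbf a})\geq 0$ for all $c\in\Pi$. The set $\PLC(\Pi)\cup\{0\}$ is exactly the set of vectors with all coefficients non-negative, so $\eta_{\mathbf b}-\eta_{\mathbf a}\in\PLC(\Pi)\cup\{0\}$. The only point needing care here is this closedness of the positive cone under limits, and it holds because the defining conditions are non-strict coefficient inequalities.

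\textbf{Second assertion.} Suppose in addition that $\pos(\eta_{\mathbf b})=\emptyset$. The hypotheses of the preceding proposition are symmetric in the transitional sequence used, since $\mathbf b\in T(\eta)$ and $\eta\in U^*\cap Q$. A separating index $m_{\mathbf b}$ exists: after the finitely many last occurrences of the simple roots that occur only finitely often, every remaining letter occurs infinitely often. So the first assertion applies with the roles of $\mathbf a$ and $\mathbf b$ interchanged, giving $\eta_{\mathbf a}-\eta_{\mathbf b}\in\PLC(\Pi)\cup\{0\}$.

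Now set $v=\eta_{\mathbf b}-\eta_{\mathbf a}$. Then both $v$ and $-v$ lie in $\PLC(\Pi)\cup\{0\}$. If $v\neq 0$, then $|v|_1>0$ and $|-v|_1>0$, which is impossible because $|\cdot|_1$ is linear. Hence $v=0$, i.e.\ $\eta_{\mathbf a}=\eta_{\mathbf b}$.

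As an alternative route to the same conclusion, the preceding proposition gives $|\eta_{\mathbf a}|_1=|\eta_{\mathbf b}|_1=\inf_{w\in W}|w\eta|_1$. A vector in $\PLC(\Pi)\cup\{0\}$ with $|\cdot|_1=0$ must be $0$, so again $\eta_{\mathbf b}=\eta_{\mathbf a}$.
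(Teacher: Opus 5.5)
Your proposal is correct and follows essentially the same route as the paper: apply the preceding proposition to the orbit points $r_{b_n}\cdots r_{b_1}\eta$, pass the coefficientwise inequalities $\coeff_c(\cdot)\geq\coeff_c(\eta_{\mathbf a})$ to the limit, and then interchange the roles of $\mathbf a$ and $\mathbf b$ when $\pos(\eta_{\mathbf b})=\emptyset$. Your remarks on the existence of a separating index for $\mathbf b$, and on why mutual containment in $\PLC(\Pi)\cup\{0\}$ forces equality, make explicit what the paper leaves implicit.
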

\begin{proof}
The last proposition affirms that $\forall a\in \Pi$ and $\forall w\in W$
$$\coeff_a(w\eta)\geq \coeff_a(\eta_{\mathbf{a}}).$$
Thus, in particular, $\forall a\in \Pi$ and $\forall n$ with  $1\leq n\leq |\pos(\eta)|$
$$\coeff_a(r_{b_{n}}\ldots r_{b_1}\eta)\geq \coeff_a(\eta_{\mathbf{a}}).$$
Consequently, $\forall a\in \Pi$
$$\coeff_a(\eta_{\mathbf{b}})\geq \coeff_a(\eta_{\mathbf{a}}),$$
whence $$\eta_{\mathbf{b}}-\eta_{\mathbf{a}}\in \PLC(\Pi)\cup\{0\}.$$

Next, if $\pos(\eta_{\mathbf{b}})=\emptyset$, then $\eta_{\mathbf{b}}$ satisfies the same requirements as $\eta_{\mathbf{a}}$, and hence a symmetric argument can be applied with the roles of $\eta_{\mathbf{a}}$ and $\eta_{\mathbf{b}}$ interchanged to establish that 
$$\eta_{\mathbf{a}}-\eta_{\mathbf{b}}\in \PLC(\Pi)\cup\{0\},$$
whence $$\eta_{\mathbf{a}}=\eta_{\mathbf{b}}.$$

\end{proof}

\begin{lemma}
  \label{large}
  Let $\eta\in U^*$, and let $(w_i)_{i\in \N}$ be an infinite sequence of elements in $W$ such that 
  $$\eta_{\infty}:=\lim_{i\to \infty} w_i\eta \quad\text{ exists.}$$
  Let $x\in \pos(\eta_{\infty})$. If $w_i^{-1}x\in \Phi^+$ for all $i\in \N$, then the set
  $$\{\, w_i^{-1}x\mid i\in \N \,\}$$
  is finite, and
  $$(w_i\eta, x)=(\eta_{\infty}, x)$$
  holds for all sufficiently large $i$.
\end{lemma}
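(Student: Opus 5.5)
The plan is to reduce both assertions to Proposition~\ref{epsilon}, applied to the single fixed vector $\eta$. Set $y_i:=w_i^{-1}x$; by hypothesis $y_i\in\Phi^+$ for every $i$. By $W$-invariance of the bilinear form,
$$(\eta, y_i)=(\eta, w_i^{-1}x)=(w_i\eta, x).$$
Since $w_i\eta\to\eta_\infty$ and $(\eta_\infty,x)>0$ (because $x\in\pos(\eta_\infty)$), continuity gives $N\in\N$ with
$$(\eta,y_i)=(w_i\eta,x)\geq \tfrac12(\eta_\infty,x)=:\epsilon>0 \quad\text{for all } i\geq N.$$
Thus $\{y_i\mid i\geq N\}\subseteq\{y\in\Phi^+\mid (y,\eta)\geq\epsilon\}$, and this set is finite by Proposition~\ref{epsilon}, as $\eta\in U^*$. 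Adding the finitely many $y_1,\dots,y_{N-1}$, the whole set $\{w_i^{-1}x\mid i\in\N\}$ is finite. This proves the first claim.

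For the second claim, consider the function $i\mapsto (w_i\eta,x)=(\eta,y_i)$. It takes only finitely many values, since $y_i$ ranges over the finite set just obtained. It also converges to $(\eta_\infty,x)$. A convergent real sequence with finite range is eventually constant, and the constant must equal its limit. To see this, let $\delta>0$ be smaller than the distance from $(\eta_\infty,x)$ to every value in the range other than $(\eta_\infty,x)$. For $i$ large, $(w_i\eta,x)$ lies within $\delta$ of the limit, which forces $(w_i\eta,x)=(\eta_\infty,x)$.

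There is no real obstacle. The only point needing care is that the hypothesis $w_i^{-1}x\in\Phi^+$ is used exactly so that Proposition~\ref{epsilon}, which concerns positive roots, applies. Without it the $y_i$ could be negative roots, and the pairing bound would not control them.
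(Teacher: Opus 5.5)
Your proposal is correct and follows essentially the same route as the paper: use $W$-invariance to rewrite $(w_i\eta,x)=(\eta,w_i^{-1}x)$, bound this below by $\tfrac12(\eta_\infty,x)$ for large $i$, and apply Proposition~\ref{epsilon} to the fixed $\eta$ to get finiteness, after which the eventual equality follows. The only difference is that you spell out why a convergent sequence with finite range is eventually equal to its limit, a step the paper simply asserts.
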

\begin{proof}
Note that for $x\in \pos(\eta_{\infty})$
$$\lim_{i\to\infty} (\eta, w_i^{-1}x)=\lim_{i\to\infty}(w_i\eta, x)=(\eta_{\infty}, x)>0.$$
Thus there exists some $n\in \N$ such that 
$$(\eta, w_i^{-1}x)>\frac{(\eta_{\infty}, x)}{2}, \quad\forall i\geq n.$$
Recall that it follows from Proposition~\ref{epsilon} 
$$\bigg|\bigg\{y\in \Phi^+\mid (y, \eta)\geq \frac{(\eta_{\infty}, x)}{2}\bigg\}\bigg|<\infty.$$
Since $\{w_i^{-1}x\mid i\geq n\}\subseteq \{y\in \Phi^+\mid (y, \eta)\geq \frac{(\eta_{\infty}, x)}{2}\}$, it follows that the set
$$\{w_i^{-1}x\mid i\geq n\}$$
is finite, whence 
$$\{w_i^{-1}x\mid i\in \N\}$$
is also finite. Consequently, 
$$(\eta, w_i^{-1}x)=(w_i\eta, x)=(\eta_{\infty}, x),$$
once $i$ is large enough.
\end{proof}

\begin{proposition}
\label{pos_cap}

Let $\eta\in U^*$ with $|\pos(\eta)|=\infty$. Let $\mathbf{a}=(a_i)_{i\in \N}$ be a transitional sequence for $\eta$. Let $\eta_{\mathbf{a}}$ be the corresponding limit and let $m_{\mathbf{a}}$ be a corresponding separating index. 
Then $\forall n\geq m_{\mathbf{a}}$
$$\pos(\eta_{\mathbf{a}})=\pos(\eta_n)\cap \{\,x\in \Phi^+\mid (x, a_i)=0\,\,\forall i\geq m_{\mathbf{a}} \,\},$$
and, $\forall x\in \pos(\eta_{\mathbf{a}})$
$$(\eta_n, x)=(\eta_{\mathbf{a}}, x).$$
\end{proposition}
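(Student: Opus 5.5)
The plan is to use a single identity for the whole argument. For any $x\in\Phi$ and any $i$,
$$(\eta_{i+1},x)=(\eta_i-2(\eta_i,a_i)a_i,\,x)=(\eta_i,x)-2(\eta_i,a_i)(a_i,x).$$
By the definition of a separating index, every $a_i$ with $i\geq m_{\mathbf{a}}$ lies in $\mathbf{a}_\infty$, and every element of $\mathbf{a}_\infty$ occurs infinitely often. So the condition "$(x,a_i)=0$ for all $i\geq m_{\mathbf{a}}$" is the same as "$(x,a)=0$ for all $a\in\mathbf{a}_\infty$".

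\emph{The inclusion $\supseteq$ and the equality of pairings.} Let $x\in\Phi^+$ with $(x,a_i)=0$ for all $i\geq m_{\mathbf{a}}$. The identity gives $(\eta_{i+1},x)=(\eta_i,x)$ for all $i\geq m_{\mathbf{a}}$. Hence $(\eta_n,x)$ is constant for $n\geq m_{\mathbf{a}}$, and passing to the limit gives $(\eta_n,x)=(\eta_{\mathbf{a}},x)$. In particular $x\in\pos(\eta_n)$ if and only if $x\in\pos(\eta_{\mathbf{a}})$. This proves $\supseteq$. It also proves the second assertion for every $x\in\pos(\eta_{\mathbf{a}})$, once $\subseteq$ is established.

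\emph{The inclusion $\subseteq$.} Let $x\in\pos(\eta_{\mathbf{a}})$. It suffices to show $(x,a)=0$ for every $a\in\mathbf{a}_\infty$; the previous paragraph then gives $(\eta_n,x)=(\eta_{\mathbf{a}},x)>0$, so $x\in\pos(\eta_n)$.

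Since $(\eta_i,x)\to(\eta_{\mathbf{a}},x)>0$, there is $N$ with $x\in\pos(\eta_i)$ for all $i\geq N$. Applying Lemma~\ref{tranPos} repeatedly gives
$$\pos(\eta_i)\subseteq r_{a_{i-1}}\cdots r_{a_1}\pos(\eta_1).$$
Therefore, with $w_i:=r_{a_{i-1}}\cdots r_{a_1}$, we get $w_i^{-1}x\in\pos(\eta)\subseteq\Phi^+$ for all $i\geq N$. Now apply Lemma~\ref{large} to $\eta$ and the sequence $(w_i)_{i\geq N}$, using $w_i\eta=\eta_i\to\eta_{\mathbf{a}}$. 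It shows that $(\eta_i,x)$ is eventually constant.

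Also, for $i\geq N$ we have $x\neq a_i$. Otherwise Lemma~\ref{tranPos} would give $x=a_i\notin\pos(\eta_{i+1})$, contradicting $i+1\geq N$. Combining the eventual constancy with the identity, for all large $i$
$$(\eta_i,a_i)(a_i,x)=0.$$
Since $(\eta_i,a_i)>0$ by the definition of a transitional sequence, this forces $(a_i,x)=0$ for all large $i$. Each $a\in\mathbf{a}_\infty$ equals $a_i$ for arbitrarily large $i$, so $(a,x)=0$ for all $a\in\mathbf{a}_\infty$, as required.

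The main obstacle is the step showing $(\eta_i,x)$ is eventually constant. It depends on verifying that $w_i^{-1}x$ is a positive root for all large $i$, which is exactly the hypothesis of Lemma~\ref{large}. This is handled by combining the eventual membership $x\in\pos(\eta_i)$ with the backward transport of positive sets from Lemma~\ref{tranPos}, after discarding the finitely many indices $i<N$.
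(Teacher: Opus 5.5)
Your proof is correct and follows the same route as the paper: the one-step identity gives $\supseteq$ together with the equality of pairings, and for $\subseteq$ you invoke Lemma~\ref{large} to make $(\eta_i,x)$ eventually constant, which forces $(a_i,x)=0$ for all large $i$, and hence for all $i\geq m_{\mathbf{a}}$. The only variation is in how you verify the hypothesis $w_i^{-1}x\in\Phi^+$. The paper takes $w_i=r_{a_{i-1}}\cdots r_{a_n}$ and uses Lemma~\ref{form0} to see that $\supp(x)\not\subseteq\mathbf{a}_\infty$, so reflections in $\mathbf{a}_\infty$ cannot make $x$ negative. You instead take $w_i=r_{a_{i-1}}\cdots r_{a_1}$ and transport positive sets back via Lemma~\ref{tranPos} to get $w_i^{-1}x\in\pos(\eta)$, which is equally valid; your separate check that $x\neq a_i$ is unnecessary, since $(a_i,x)=0$ already rules it out.
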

\begin{proof}
First note that if $x\in \pos(\eta_n)\cap \{\,x\in \Phi^+\mid (x, a_i)=0\,\,\forall i\geq m_{\mathbf{a}} \,\}$, then 
$$(\eta_i, x)=(\eta_n, x)>0, \quad \forall i\geq n.$$
In particular, for all those $x$,
$$(\eta_{\mathbf{a}}, x)=\lim_{i\to \infty}(\eta_i, x)=(\eta_n, x)>0.$$
Hence 
$$\pos(\eta_n)\cap \{\,x\in \Phi^+\mid (x, a_i)=0\,\,\forall i\geq m_{\mathbf{a}} \,\}\subseteq\pos(\eta_{\mathbf{a}}).$$

Conversely, let $x\in \pos(\eta_{\mathbf{a}})$. Now for each $i>n\geq m_{\mathbf{a}}$, define
$$w_i:=r_{a_{i-1}}\cdots r_{a_n}.$$
Then for all $i>n\geq m_{\mathbf{a}}$
$$w_i\eta_{n}=\eta_i.$$
Recall Lemma~\ref{form0} and Remark~\ref{eta_a}, and we see that
$$(a, \eta_{\mathbf{a}})=0,\quad \forall a\in \mathbf{a}_{\infty}.$$
Thus the requirement $x\in \pos(\eta_{\mathbf{a}})$ means that 
$\supp(x)$ is not contained in $\mathbf{a}_{\infty}$, and, in particular, 
$\supp(x)$ is not contained in $\{\,a_i\mid i> n\,\}$. Thus 
$$w_i^{-1}x\in \Phi^+, \quad\forall i>n\geq m_{\mathbf{a}}.$$
Furthermore, since $\lim_{i\to\infty}(w_i\eta_n)=\lim_{i\to \infty}\eta_i=\eta_{\mathbf{a}}$ exists, it follows from Lemma~\ref{large} that
$$(\eta_i, x)=(w_i\eta_n, x)=(\eta_n, w_i^{-1}x)=(\eta_{\mathbf{a}}, x)$$
for all $i$ sufficiently large.
This implies that 
$$(x, a_i)=0, \text{ for all $i$ sufficiently large}.$$
But once $i$ is sufficiently large, $a_i\in \mathbf{a}_{\infty}$, this implies that 
$$(x, a_i) = 0, \quad \forall i\geq m_{\mathbf{a}}.$$
Then 
$$(\eta_i, x)=(\eta_n, x), \quad \forall i\geq m_{\mathbf{a}},$$
and so 
$$(\eta_n, x)=(\eta_{\mathbf{a}}, x)>0,$$
and 
$$x\in \pos(\eta_n)\cap \{\,x\in \Phi^+\mid (x, a_i)=0\,\,\forall i\geq m_{\mathbf{a}} \,\}.$$ 

\end{proof}

We close this section with a stronger version of Proposition~\ref{zero}:
\begin{theorem}
  \label{zero2}
  Let $\eta\in U^*\cap Q$ with $|\pos(\eta)|=\infty$. Let $\mathbf{a}=(a_i)_{i\in \N}$ be a transitional sequence for $\eta$, with $m_{\mathbf{a}}$ being a separating index for $\mathbf{a}=(a_i)_{i\in \N}$ and $\eta_{\mathbf{a}}$ being the corresponding limit of $\eta$ via $\mathbf{a}=(a_i)_{i\in \N}$.
  Suppose further that $\supp(\eta_{m_{\mathbf{a}}})$ is connected. Then
  $$\eta_{\mathbf{a}}=0 \quad\text{ and  }\quad \mathbf{a}_{\infty}=\supp(\eta_{m_{\mathbf{a}}}).$$
\end{theorem}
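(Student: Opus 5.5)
The strategy is to reduce to Proposition~\ref{zero}. That result gives exactly this conclusion, but under the extra hypothesis $\pos(\eta_{\mathbf{a}})=\emptyset$. So it suffices to show that, for $\eta\in U^*\cap Q$ with $\supp(\eta_{m_{\mathbf{a}}})$ connected, the limit $\eta_{\mathbf{a}}$ automatically has empty positive set. We argue by contradiction: assume $x\in\pos(\eta_{\mathbf{a}})$.

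First, record what the earlier results give.
\begin{itemize}
\item By Proposition~\ref{component}, $\supp(\eta_{\mathbf{a}})$ is a union of connected components of the connected set $J:=\supp(\eta_{m_{\mathbf{a}}})$. Hence either $\eta_{\mathbf{a}}=0$ or $\supp(\eta_{\mathbf{a}})=J$.
\item In the first case $\pos(\eta_{\mathbf{a}})=\emptyset$ trivially, and Proposition~\ref{zero} finishes the proof.
\item So assume $\supp(\eta_{\mathbf{a}})=J$. As in (\ref{e1}), $(\eta_{\mathbf{a}},\eta_{\mathbf{a}})=0$, since each $\eta_i\in Q$ and $Q$ is closed. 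Also $\eta_{\mathbf{a}}\in U^*$, since $U^*$ is closed.
\item Every coefficient $\coeff_a(\eta_i)$ with $a\in J$ stays positive in the limit. Combined with Lemma~\ref{easy}, this shows that elements of $J\setminus\mathbf{a}_\infty$ keep constant coefficients, while elements of $\mathbf{a}_\infty$ decrease to positive limits.
\end{itemize}

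Next, use Proposition~\ref{pos_cap}. Every $x\in\pos(\eta_{\mathbf{a}})$ lies in $\pos(\eta_n)$ and is orthogonal to all $a\in\mathbf{a}_\infty$, with $(\eta_n,x)=(\eta_{\mathbf{a}},x)$ for all $n\geq m_{\mathbf{a}}$. Remark~\ref{eta_a} gives $(a,\eta_{\mathbf{a}})=0$ for all $a\in\mathbf{a}_\infty$. Now write
\[
0=(\eta_{\mathbf{a}},\eta_{\mathbf{a}})=\sum_{b\in J\setminus\mathbf{a}_\infty}\coeff_b(\eta_{\mathbf{a}})(b,\eta_{\mathbf{a}}).
\]
The aim is to show that every $b\in J\setminus\mathbf{a}_\infty$ also satisfies $(b,\eta_{\mathbf{a}})\le 0$. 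Then each term is $\le 0$ with a positive coefficient, so $(b,\eta_{\mathbf{a}})=0$ for all $b\in J$. Thus $\eta_{\mathbf{a}}$ restricted to $J$ lies in $\PLC(J)\cap\rad(\R J)$. By \cite[Lemma 6.1.1]{DK94} (as in the proof of Proposition~\ref{affcomp}), $W_J$ is irreducible affine.

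From this we derive the contradiction. Since $\eta_{\mathbf{a}}\in\rad(\R J)$, it is fixed by $W_J$. Lemma~\ref{int} places $\eta_{m_{\mathbf{a}}}\in U^*_J\cap Q=\rad(\R J)\cap\PLC(J)$, using \cite[Proposition 2.6]{HM}. So $\eta_{m_{\mathbf{a}}}$ is orthogonal to all of $J$, and hence no $a\in J$ can belong to $\pos(\eta_{m_{\mathbf{a}}})\cap\Pi$. But $a_{m_{\mathbf{a}}}\in J$ is such a simple root, which is a contradiction. This shows $\eta_{\mathbf{a}}=0$, and Proposition~\ref{zero}, or its final paragraph verbatim, then gives $\mathbf{a}_\infty=J$.

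The main obstacle is showing $(b,\eta_{\mathbf{a}})\le0$ for $b\in J\setminus\mathbf{a}_\infty$ with $b$ simple. My first attempt would be this: if $(b,\eta_{\mathbf{a}})>0$, then by Proposition~\ref{pos_cap} we get $(b,a)=0$ for all $a\in\mathbf{a}_\infty$. Connectivity of $J$ then forces a path from $\mathbf{a}_\infty$ to $b$ through elements of $J\setminus\mathbf{a}_\infty$. I would try to propagate positivity along that path using Lemma~\ref{two} and Lemma~\ref{neg}, or else replace $\eta_{\mathbf{a}}$ by $r_b\eta_{\mathbf{a}}$ and iterate finitely. The iteration is finite because only finitely many roots $y$ satisfy $(y,\eta_{\mathbf{a}})\ge\epsilon$, by Proposition~\ref{epsilon}. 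If that route fails, I would instead exploit the full strength of Proposition~\ref{pos_cap}: $\pos(\eta_{\mathbf{a}})$ is contained in the roots orthogonal to $\mathbf{a}_\infty$, and these live in a parabolic subsystem determined by $J\setminus\mathbf{a}_\infty$ and its neighbours, where a finiteness or positivity argument should apply.
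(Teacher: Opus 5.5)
\paragraph{The gap.} Your reduction to proving $\pos(\eta_{\mathbf{a}})=\emptyset$ is legitimate, and your preliminary observations are correct. The trouble is that the step you call the ``main obstacle'' is not an intermediate lemma: it is equivalent to $\pos(\eta_{\mathbf{a}})=\emptyset$ itself.

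To see this, note that any $x\in\pos(\eta_{\mathbf{a}})$ is a nonnegative combination of simple roots, so some $b\in\Pi$ has $(b,\eta_{\mathbf{a}})>0$. Such a $b$ must lie in $\supp(\eta_{\mathbf{a}})=J$, since off-diagonal form values between simple roots are $\le 0$. It cannot lie in $\mathbf{a}_\infty$, by Remark~\ref{eta_a}. Consequently, everything after ``the aim is to show'' only re-proves Proposition~\ref{zero} under its own hypothesis. That includes the radical computation, the irreducible affineness of $W_J$, Lemma~\ref{int}, and the contradiction with $a_{m_{\mathbf{a}}}$. The actual new content of the theorem is left as a heuristic.

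Your fallback suggestions do not close this gap.
\begin{itemize}
\item Propagating positivity along a path via Lemmas~\ref{two} and~\ref{neg} gives no inequality on $(b,\eta_{\mathbf{a}})$.
\item The claim that iterating $r_b$ on $\eta_{\mathbf{a}}$ terminates is not justified by Proposition~\ref{epsilon}. That result controls only roots whose pairing stays above a fixed $\epsilon$, while the pairings along a reduction may tend to $0$; this is exactly why descending sequences can be infinite. The paper instead obtains a limit with empty $\pos$ from the greedy choice of Lemma~\ref{eg2}.
\item More fundamentally, reducing $\eta_{\mathbf{a}}$ alone can never produce a contradiction. Since $\eta_{\mathbf{a}}$ is orthogonal to all of $\mathbf{a}_\infty$, it has forgotten the positivity $(\eta_{m_{\mathbf{a}}},a_{m_{\mathbf{a}}})>0$. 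You need a mechanism that transports the reduction back to $\eta_{m_{\mathbf{a}}}$.
\end{itemize}

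\paragraph{The missing idea.} You already have the seed: by Proposition~\ref{pos_cap}, every root in $\pos(\eta_{\mathbf{a}})$ is orthogonal to $\mathbf{a}_\infty$. The paper's argument runs as follows.
\begin{itemize}
\item Choose a greedy transitional sequence $\mathbf{b}$ for $\eta_{\mathbf{a}}$, and let $m$ be a separating index for $\mathbf{b}$.
\item By Lemma~\ref{pos}, $b_{j,1}=r_{b_1}\cdots r_{b_{j-1}}b_j\in\pos(\eta_{\mathbf{a}})$. Hence $(b_{j,1},a_i)=0$, and inductively $(b_j,a_i)=0$, for all $j$ and all $i\ge m_{\mathbf{a}}$.
\item So $w:=r_{b_{m-1}}\cdots r_{b_1}$ fixes each $a_i\in\mathbf{a}_\infty$. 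It also fixes $\delta:=\eta_{m_{\mathbf{a}}}-\eta_{\mathbf{a}}$, which is supported in $\mathbf{a}_\infty$ by Lemma~\ref{easy}.
\item The $\mathbf{a}_\infty$-coefficients are never altered along $\mathbf{b}$, so $\mathbf{a}_\infty\subseteq\supp({\eta_{\mathbf{a}}}_{\mathbf{b}})=:K$. By Proposition~\ref{affcomp}, the components of $K$ are irreducible affine.
\item The vector $w\eta_{m_{\mathbf{a}}}=w\eta_{\mathbf{a}}+\delta$ lies in $U^*\cap Q$ and has the same support as $w\eta_{\mathbf{a}}$. By Lemma~\ref{int}, its restriction to each component of $K$ lies in the radical.
\item Hence $(\eta_{m_{\mathbf{a}}},a_i)=(w\eta_{m_{\mathbf{a}}},a_i)=0$ for all $i\ge m_{\mathbf{a}}$. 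This contradicts $a_{m_{\mathbf{a}}}\in\pos(\eta_{m_{\mathbf{a}}})$.
\end{itemize}
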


\begin{proof}
Suppose for a contradiction that the limit $\eta_{\mathbf{a}}\neq 0$. Then it follows from Proposition~\ref{zero} that $\pos(\eta_{\mathbf{a}})\neq \emptyset$. Recall that we have established in Proposition~\ref{component} that $\supp(\eta_{\mathbf{a}})$ is a union of connected components of $\supp(\eta_{m_{\mathbf{a}}})$, and hence under the present assumption we have that 
\begin{equation}\label{I}
  I:=\supp(\eta_{m_{\mathbf{a}}})=\supp(\eta_{\mathbf{a}}),
\end{equation}
with $I$ being connected.


Let $\mathbf{b}=(b_j)_{1\leq j\leq |\pos(\eta_{\mathbf{a}})|}\in T(\eta_{\mathbf{a}})$, that is, $\mathbf{b}=(b_j)_{1\leq j\leq |\pos(\eta_{\mathbf{a}})|}$ is a transitional sequence for $\eta_{\mathbf{a}}$. Next for each $1\leq j\leq |\pos(\eta_{\mathbf{a}})|$ let us define 
\begin{align*}
\eta'_1&:=\eta_{\mathbf{a}},\\
\noalign{\hbox{and whenever $j\leq |\pos(\eta_{\mathbf{a}})|$}}
\eta'_{j+1}&:=r_{b_j} \eta'_j,
\end{align*}
where $b_j\in \Pi\cap \pos(\eta'_j)$ with the further requirement that 
$$(b_j, \eta'_j)\geq (a, \eta'_j), \quad \forall a\in \Pi\cap\pos(\eta'_j).$$
Then Remark~\ref{ts} and Lemma~\ref{eg2} ensure that for the corresponding limit ${\eta_{\mathbf{a}}}_{\mathbf{b}}$ of $\eta_{\mathbf{a}}$ via $\mathbf{b}=(b_j)_{1\leq j\leq |\pos(\eta_{\mathbf{a}})|}$,  
$$\pos({\eta_{\mathbf{a}}}_{\mathbf{b}})=\emptyset.$$
Let $m=m_{\mathbf{b}}$ be a separating index for $\mathbf{b}=(b_j)_{1\leq j\leq |\pos(\eta_{\mathbf{a}})|}$, and let 
$$K:=K_1\uplus \cdots \uplus K_l,$$
where $K_1, \ldots, K_l$ form a full set of pairwise disjoint connected components of $\supp(\eta'_m)$ where $\eta'_m =r_{b_{m-1}}\cdots r_{b_1}\eta_{\mathbf{a}}$, such that 
$$|\pos(r_{b_{m-1}}\cdots r_{b_1}\eta_{\mathbf{a}}|_{K_k})|<\infty, \quad \text{for each $k\in \{1,\ldots, l\}$}.$$
Next set
\begin{equation}\label{ydef}
y:=r_{b_{m-1}}\cdots r_{b_1} (\eta_{\mathbf{a}})|_{K}=\eta'_m|_K.
\end{equation}
Note that under this construction, Proposition~\ref{affcomp} asserts that 
\begin{equation}\label{limit limit}
  y= {\eta_{\mathbf{a}}}_{\mathbf{b}}.
\end{equation}
 
Next, for all $1\leq j\leq |\pos(\eta_{\mathbf{a}})|$, construct
$$b_{j,1}:=r_{b_1}\cdots r_{b_{j-1}}b_j.$$
Then Lemma~\ref{pos} ensures that 
$$b_{j, 1} \in \pos(\eta_{\mathbf{a}}), \quad 1\leq j\leq |\pos(\eta_{\mathbf{a}})|,$$
moreover, recall Proposition~\ref{pos_cap} that 
$$\pos(\eta_{\mathbf{a}})=\pos(\eta_{m_{\mathbf{a}}})\cap\{x\in \Phi^+\mid (x, a_i)=0,\,\,\forall i\geq m_{\mathbf{a}}\},$$
and hence for each $1\leq j\leq |\pos(\eta_{\mathbf{a}})|$
$$(b_{j, 1}, a_i)=(r_{b_1}\cdots r_{b_{j-1}}b_j, a_i)=0,\quad \forall i\geq m_{\mathbf{a}}.$$
In particular, for all $i\geq m_{\mathbf{a}}$
\begin{align*}
(a_i, b_1)&=0,\\
\noalign{\hbox{and }}
(a_i, r_{b_1}b_2)&=(a_i,b_2)-2(b_1, b_2)(a_i, b_1)=0, \\
\end{align*}
that is, $(a_i, b_2)=0$ and so on. Hence for each $1\leq j\leq |\pos(\eta_{\mathbf{a}})|$
\begin{equation}
\label{aibj0}
(a_i, b_j)=0,\quad \forall i\geq m_{\mathbf{a}}.
\end{equation}
Now Lemma~\ref{easy} implies that
\begin{equation}\label{ainf}
\supp(\eta_{m_{\mathbf{a}}}-\eta_{\mathbf{a}})\subseteq \mathbf{a}_{\infty}=\{a_i\mid i\geq m_{\mathbf{a}}\},  
\end{equation}
and hence we deduce that 
\begin{align*}
  &r_{b_{m-1}}\cdots r_{b_1} \eta_{m_{\mathbf{a}}}  = r_{b_{m-1}}\cdots r_{b_1} (\eta_{m_{\mathbf{a}}}-\eta_{\mathbf{a}}) +r_{b_{m-1}}\cdots r_{b_1} (\eta_{\mathbf{a}}) \\
  & =  \underbrace{(\eta_{m_{\mathbf{a}}}-\eta_{\mathbf{a}})}_\text{$(\eta_{m_{\mathbf{a}}}-\eta_{\mathbf{a}})$ fixed by $r_{b_{m-1}}\cdots r_{b_1}$, following (\ref{aibj0}) and (\ref{ainf}) }+r_{b_{m-1}}\cdots r_{b_1} (\eta_{\mathbf{a}}).
\end{align*}
Note that $\eta_{m_{\mathbf{a}}}-\eta_{\mathbf{a}}\in \PLC(\Pi)$ and $r_{b_{m-1}}\cdots r_{b_1} (\eta_{\mathbf{a}})\in U^*\subset\PLC(\Pi)\cup\{0\}$. It then follows from the above that 
\begin{equation}
\label{suppcontain}
 \supp(r_{b_{m-1}}\cdots r_{b_1}\eta_{\mathbf{a}})\subseteq \supp(r_{b_{m-1}}\cdots r_{b_1} \eta_{m_{\mathbf{a}}}). 
\end{equation} 
Furthermore,
\begin{equation}\label{inainf}
\supp(r_{b_{m-1}}\cdots r_{b_1} \eta_{m_{\mathbf{a}}})\setminus\supp(r_{b_{m-1}}\cdots r_{b_1}\eta_{\mathbf{a}})
\subseteq \supp(\eta_{m_{\mathbf{a}}}-\eta_{\mathbf{a}})\subseteq \mathbf{a}_{\infty}.
\end{equation}
On the other hand
\begin{equation}\label{assumption}
\underbrace{\mathbf{a}_{\infty}\subseteq \supp(\eta_{m_{\mathbf{a}}})}_\text{by Lemma~\ref{trivial}} \underbrace{=\supp(\eta_{\mathbf{a}})}_\text{by (\ref{I})}.
\end{equation}
Now (\ref{assumption}) and (\ref{aibj0}) together imply that
\begin{equation}\label{ainfin}
\mathbf{a}_{\infty}\subseteq \supp(r_{b_{m-1}}\cdots r_{b_1} \eta_{m_{\mathbf{a}}})\cap \supp(r_{b_{m-1}}\cdots r_{b_1} \eta_{\mathbf{a}}).
\end{equation}
Upon combining (\ref{inainf}) and (\ref{ainfin}) we see that
\begin{align*}
 \supp(r_{b_{m-1}}\cdots r_{b_1} \eta_{m_{\mathbf{a}}})&\setminus\supp(r_{b_{m-1}}\cdots r_{b_1}\eta_{\mathbf{a}})\subseteq \mathbf{a}_{\infty}\\  &\subseteq \supp(r_{b_{m-1}}\cdots r_{b_1} \eta_{m_{\mathbf{a}}})\cap \supp(r_{b_{m-1}}\cdots r_{b_1} \eta_{\mathbf{a}}).
\end{align*}
Consequently, in view of (\ref{suppcontain}) we deduce from the above that
\begin{equation}\label{ainfeq}
\supp(r_{b_{m-1}}\cdots r_{b_1} \eta_{m_{\mathbf{a}}})=\supp(r_{b_{m-1}}\cdots r_{b_1}\eta_{\mathbf{a}}).
\end{equation} 
Now recall the definition of $y$ given in (\ref{ydef}), and let $J$ be a connected component of $\supp(y)=K$. 
By (\ref{ainfeq})
$$J\subseteq K\subseteq \supp(r_{b_{m-1}}\cdots r_{b_1}\eta_{\mathbf{a}} )=\supp(r_{b_{m-1}}\cdots r_{b_1}\eta_{m_{\mathbf{a}}}),$$ and furthermore,
$W_J:=\langle r_a\mid a\in J\rangle$ is an irreducible affine standard parabolic subgroup of $W$. Now since 
$r_{b_{m-1}}\cdots r_{b_1} \eta_{m_{\mathbf{a}}}\in U^*\cap Q$, it follows from Lemma~\ref{int} that 
\begin{equation}\label{iso}
r_{b_{m-1}}\cdots r_{b_1} \eta_{m_{\mathbf{a}}}|_J\in Q\cap \R J.
\end{equation}
Since $W_J$ is an irreducible affine standard parabolic subgroup, it follows that the isotropic set of the subspace spanned by $J$ (that is, $Q\cap\R J$) coincides with the radical of the subspace spanned by $J$. Therefore 
$$(a, r_{b_{m-1}}\cdots r_{b_1} \eta_{m_{\mathbf{a}}}|_J )=0,\quad \forall a\in J.$$
Since $\supp(y)=K$ is the union of some connected components of $\supp(r_{b_{m-1}}\cdots r_{b_1} \eta_{m_{\mathbf{a}}})$,
we deduce that for all $a\in \supp(y)=K$,
\begin{equation}\label{supp y}
(r_{b_{m-1}}\cdots r_{b_1}\eta_{m_{\mathbf{a}}}, a)=0.
\end{equation}
On the other hand we point out that one consequence of (\ref{aibj0}) and (\ref{assumption}) is that 
\begin{equation}\label{suppeq}
\mathbf{a}_{\infty}\subseteq \supp({\eta_{\mathbf{a}}}_{\mathbf{b}})=\supp(y)=K.  
\end{equation}
Therefore, for all $i\geq m_{\mathbf{a}}$ (that is, for all $a_i\in \mathbf{a}_{\infty}$)
$$(\eta_{m_{\mathbf{a}}}, a_i)=\underbrace{(r_{b_{m-1}}\cdots r_{b_1}\eta_{m_{\mathbf{a}}}, a_i)=0}_\text{by (\ref{supp y}) and (\ref{suppeq})},$$
which is a contradiction to the definition of the $a_i$'s. Whence $\eta_{\mathbf{a}}=0$. 

Finally, it follows from Proposition~\ref{zero} that 
$$\mathbf{a}_{\infty}=\supp(\eta_{m_{\mathbf{a}}})=\supp(\eta_m), \quad \forall m\geq m_{\mathbf{a}}.$$

\end{proof}

%
%
%

\section{Accumulations of infinite dominance chains}
\label{sec:dominance-accumulation}

In this section, we study the accumulation points of infinite normalised sequence $(\widehat{x_i})_{i\in \N}$ where $(x_i)_{i\in \N}$ is an infinite injective sequence of positive roots totally ordered by dominance, that is, 
$$x_{i+1}\dom x_i \text{ and } x_{i+1}\neq x_i, \quad\forall i\in \N.$$ 
The following was asked in \cite[Section 7.7]{DHR13}: whether or not the normalised sequence $(\widehat{x_i})_{i\in \N}$ is convergent. 
In a forthcoming paper, we shall resolve this question in full. 

It is still profitable, in our view, to study how the techniques that we have developed in the previous sections of this paper may be applied to study infinite dominance chains, and, indeed, the dominance partial ordering itself further. We first show that the uniqueness of accumulation holds in all infinite rank-$3$ Coxeter groups. Next, we show that if such a normalised sequence possesses more than one accumulation point, then any two of these distinct accumulation points, say,  $\eta_1$ and $\eta_2$, would enjoy a number of similarities, including the following:
\begin{itemize}
\item[(i)] $|\pos(\eta_1)|<\infty \implies \eta_1=\eta_2$; 
\item[(ii)] $\supp(\eta_1)=\supp(\eta_2)=\supp(x_i)$, once $i$ is sufficiently large, and, in particular, this support is connected;
\item[(iii)] $\big\{ x\in \Phi^+\mid (x, \eta_1)\diamond 0 \big\} = \big\{ x\in \Phi^+\mid (x, \eta_2)\diamond 0 \big\}$ where $\diamond$ can assume each of the binary relations $<$, $=$, or $>$, respectively;
\item[(iv)] $\big\{ x\in U^* \mid (x, \eta_1)\diamond 0 \big\} = \big\{ x\in U^* \mid (x, \eta_2)\diamond 0 \big\}$ where $\diamond$ can assume each of the binary relations $<$, $=$, or $>$, respectively;
\item[(v)] In the case where both 
   $$\limsup_{i_j\to\infty}(\eta_2, x_{i_j})|x_{i_j}|_1 >0 \text{ and } \limsup_{i_k\to\infty}(\eta_1, x_{i_k})|x_{i_k}|_1 >0$$ 
   then  $\eta_1=\eta_2$, where $\eta_1$ is the limit of the subsequence $(\widehat{x_{i_j}})$ and $\eta_2$ is the limit of the subsequence $(\widehat{x_{i_k}})$.     
\end{itemize}
We hope that these findings may prove to be useful in further investigations of these accumulation points.

We also give a number of conditions under which uniqueness of accumulation can be expected. 
\begin{lemma}
\label{limorth}
Suppose that $(x_i)_{i\in \N}\subseteq \Phi^+$ is an infinite injective sequence of positive roots and suppose that $\eta_1$ and $\eta_2$ are two accumulation points for the normalised sequence $(\widehat{x_i})_{i\in \N}$ such that 
$$(x_i, \eta_1)\geq 0 \text{ and } (x_i, \eta_2)\geq 0, \quad \forall i\in \N.$$
Then $$(\eta_1, \eta_2)=0.$$ 
\end{lemma}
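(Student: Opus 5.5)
Since $\eta_1,\eta_2$ are accumulation points of $(\widehat{x_i})$, they are limit roots. Hence $\eta_1,\eta_2\in E\subseteq\overline Z\subseteq\widehat{U^*}\subseteq U^*$, and Proposition~\ref{leq0} already gives $(\eta_1,\eta_2)\leq 0$. It therefore suffices to prove $(\eta_1,\eta_2)\geq 0$, and I will obtain this as a limit of the nonnegative quantities $(\widehat{x_i},\eta_2)$ along a subsequence converging to $\eta_1$.

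First choose a subsequence $(x_{i_j})_{j\in\N}$ with $\widehat{x_{i_j}}\to\eta_1$. Because the pairing is continuous,
$$(\eta_1,\eta_2)=\lim_{j\to\infty}(\widehat{x_{i_j}},\eta_2)=\lim_{j\to\infty}\frac{(x_{i_j},\eta_2)}{|x_{i_j}|_1}.$$
By hypothesis $(x_{i_j},\eta_2)\geq 0$ for every $j$, and $|x_{i_j}|_1>0$, so each term of the right-hand sequence is nonnegative. The limit is therefore $\geq 0$.

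The sharper route I would mention uses more information than is needed. The subsequence $(x_{i_j})$ is infinite, injective and positive, and $\eta_2\in U^*$ with $(\eta_2,x_{i_j})\geq0$, so Corollary~\ref{lim0} gives $(x_{i_j},\eta_2)\to 0$. Lemma~\ref{normbd} gives $|x_{i_j}|_1\to\infty$. Together these show the quotient tends to $0$, which yields $(\eta_1,\eta_2)=0$ directly, without invoking Proposition~\ref{leq0}. Either argument suffices.

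There is no serious obstacle. The only point to check is that accumulation points of $(\widehat{x_i})$ lie in $U^*$, so that Proposition~\ref{leq0} or Corollary~\ref{lim0} applies. This follows from $E\subseteq\widehat{U^*}$, recorded in the preliminaries: by Lemma~\ref{normbd} an injective sequence has $|x_i|_1\to\infty$, so its accumulation points are genuine limit roots.
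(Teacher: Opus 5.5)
Your main argument is correct and is essentially the paper's proof. The paper takes a subsequence converging to $\eta_2$ and pairs it with $\eta_1$ to get $(\eta_1,\eta_2)\geq 0$ (you swap the roles), then invokes Proposition~\ref{leq0} for the reverse inequality. Your alternative via Corollary~\ref{lim0} and Lemma~\ref{normbd} is also valid and avoids Proposition~\ref{leq0}, but it is not needed.
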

\begin{proof}
Suppose that $(x_{i_{k}})_{k\in \N}$ is a subsequence of $(x_i)_{i\in \N}$ with 
$$\lim_{k\to\infty} \widehat{x_{i_k}}=\eta_2.$$
Then 
$$(\eta_1, \eta_2)=\lim_{k\to\infty} (\eta_1, \widehat{x_{i_{k}}})\geq 0.$$
On the other hand, since $\eta_1, \eta_2\in U^*$  it follows that $(\eta_1, \eta_2)\leq 0$. Therefore $(\eta_1, \eta_2)=0$. 
\end{proof}

Note that Lemma~\ref{limorth} applies to infinite injective sequence of positive roots totally ordered by dominance, and indeed, as a special case of Lemma~\ref{limorth} we recover the following result first proven in \cite[Proposition 6.8]{DHR13}:

\begin{lemma}
\label{specialcase}
Suppose that $(x_i)_{i\in \N}$ is an infinite injective sequence of positive roots totally ordered by dominance, that is, 
$$x_{i+1}\dom x_i \text{ and } x_{i+1}\neq x_i, \quad\forall i\in \N.$$
Suppose that $\eta_1$ and $\eta_2$ are two accumulation points for the corresponding normalised sequence $(\widehat{x_i})_{i\in \N}$. Then 
$$(\eta_1, \eta_2)=0.$$
\qed
\end{lemma}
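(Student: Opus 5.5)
The plan is to reduce Lemma~\ref{specialcase} to Lemma~\ref{limorth}. That lemma only needs $(x_i,\eta_1)\geq 0$ and $(x_i,\eta_2)\geq 0$ for all $i$, so it suffices to show that every accumulation point $\eta$ of $(\widehat{x_i})_{i\in\N}$ satisfies $(x_i,\eta)\geq 0$ for every $i$. The chain is injective by assumption, so Lemma~\ref{limorth} will then give $(\eta_1,\eta_2)=0$ directly.

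First, fix $i$ and take any $j>i$. Since the sequence is totally ordered by dominance, $x_j\dom x_i$ by transitivity of dominance. By Lemma~\ref{domandform}, this gives $(x_j,x_i)\geq 1>0$. (Alternatively, Proposition~\ref{domU} gives $x_j-x_i\in U^*$, and Proposition~\ref{leq0} could be used as well, but the numerical inequality is all we need.) Dividing by $|x_j|_1>0$ yields $(\widehat{x_j},x_i)>0$ for all $j>i$.

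Next, let $(\widehat{x_{j_k}})_{k\in\N}$ be a subsequence converging to $\eta$. For each fixed $i$, all but finitely many $j_k$ exceed $i$. The map $v\mapsto (v,x_i)$ is continuous, so passing to the limit gives $(\eta,x_i)=\lim_k(\widehat{x_{j_k}},x_i)\geq 0$. This holds for every $i\in\N$ and for each of $\eta_1$ and $\eta_2$.

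Finally, $\eta_1$ and $\eta_2$ are accumulation points of an injective sequence of positive roots, so they lie in $E\subseteq\overline Z\subseteq\widehat{U^*}$. This is exactly the setting used in the proof of Lemma~\ref{limorth}, where Proposition~\ref{leq0} supplies the reverse inequality $(\eta_1,\eta_2)\leq 0$; applying that lemma completes the argument. No step is a real obstacle. The only point needing care is that the sign condition must hold for every $i$, not just eventually. This is guaranteed because for fixed $i$ the tail indices $j>i$ all dominate $x_i$, and the limit only sees the tail.
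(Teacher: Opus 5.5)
Your proposal is correct and follows the paper's route. The paper obtains Lemma~\ref{specialcase} directly as a special case of Lemma~\ref{limorth}, with the sign condition $(x_i,\eta)\geq 0$ coming from continuity of the form applied to $(x_i,\widehat{x_j})>0$ for $j>i$, as spelled out in Remark~\ref{rmk}; your explicit verification via transitivity of dominance and Lemma~\ref{domandform}, together with $\eta_1,\eta_2\in E\subseteq\widehat{U^*}$, supplies exactly the details the paper leaves implicit.
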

\begin{remark}
  \label{isotropicline}
Note that in the situation described in Lemma~\ref{specialcase} the straight line passing through $\eta_1$ and $\eta_2$ is entirely isotropic. Indeed, an arbitrary point $\zeta$ of this line has the form 
$$\zeta =\lambda \eta_1 +(1-\lambda)\eta_2, \quad \lambda\in \R.$$
Since $(\eta_1, \eta_1) =(\eta_1, \eta_2)=(\eta_2, \eta_2)=0$, we have that 
$$(\zeta, \zeta)=\lambda^2(\eta_1, \eta_1)+2\lambda (1-\lambda)(\eta_1, \eta_2)+(1-\lambda)^2(\eta_2, \eta_2) =0.$$
This observation combined with the following standard characterisation of the normalised isotropic sets for rank-$3$ infinite Coxeter groups rules out the
existence of such straight lines in the normalised isotropic sets of rank-$3$ infinite Coxeter groups, and hence establishing unique accumulation for these groups. The following characterisation of normalised isotropic sets is well known to the experts of the field, but we choose to include a proof here for completeness. We note that it has also been observed in \cite[Proposition 2.8]{HMN} that the normalised isotropic sets of weakly hyperbolic Coxeter groups do not contain two distinct points that are orthogonal to each other with respect to the bilinear form $(\cdot ,\cdot)$, and therefore, for weakly hyperbolic Coxeter groups, the isotropic sets do not contain straight lines. 
   
\end{remark}

\begin{lemma}
\label{rank3}
Suppose that $W$ is a rank-$3$ infinite Coxeter group.  
Then the normalised isotropic set
$$Q\cap V_1 =\{v\in V\mid \text{$(v, v)=0$ and $|v|_1 =1$}\}$$
does not contain a straight line.
\end{lemma}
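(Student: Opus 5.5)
The plan is to reduce to a statement about a rank-$3$ quadratic form and the signatures it can have. Here $V$ is $3$-dimensional with basis $\Pi=\{a,b,c\}$, and the Gram matrix $B$ of $(\cdot,\cdot)$ has ones on the diagonal and non-positive off-diagonal entries. Suppose, for a contradiction, that $Q\cap V_1$ contains a line $\{\zeta+t\delta\mid t\in\R\}$ with $\delta\neq0$. Then $|\delta|_1=0$, and expanding $(\zeta+t\delta,\zeta+t\delta)=0$ for all $t$ gives
\[
(\zeta,\zeta)=(\zeta,\delta)=(\delta,\delta)=0.
\]
So $\mathrm{span}(\zeta,\delta)$ is a $2$-dimensional totally isotropic subspace of $V$, because $\zeta\notin\R\delta$ (as $|\zeta|_1=1$ while $|\delta|_1=0$).

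Next I would classify the possible signatures $(p,q,r)$ of $B$, where $r=\dim\rad V$. A totally isotropic subspace has dimension at most $\min(p,q)+r$. A $2$-dimensional one in a $3$-dimensional space therefore needs either $r\ge 2$, or $r=1$ with $p,q\ge1$, or $r=0$ with $\min(p,q)\ge2$. The last case is impossible when $p+q=3$, so it remains to rule out the first two.

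\begin{itemize}
\item[(1)] \emph{The case $r\ge2$.} Here $\operatorname{rank}B\le1$. But any $2\times2$ principal minor equals $1-(a,b)^2$, and for the matrix to have rank at most $1$ every such minor must vanish. This forces every $(a,b)=-1$, and then the $3\times 3$ matrix with ones on the diagonal and $-1$ off the diagonal has rank $3$ (its determinant is $-4$), a contradiction.

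\item[(2)] \emph{The case $r=1$ with signature $(1,1,1)$.} I would use the infinite-rank-$3$ structure. Some pair, say $a,b$, has $1-(a,b)^2>0$, i.e.\ $m_{ab}<\infty$, so $B$ restricted to $\mathrm{span}(a,b)$ is positive definite and $p\ge2$. If instead every pair had $m=\infty$, all $2\times 2$ minors would be $\le0$. If moreover $\det B=0$ with signature $(1,1,1)$, a direct computation applies: with $-\alpha,-\beta,-\gamma$ the off-diagonal entries ($\alpha,\beta,\gamma\ge1$),
\[
\det B=1-\alpha^2-\beta^2-\gamma^2-2\alpha\beta\gamma\le 1-3-2<0,
\]
so $\det B\ne0$ and $r=0$.
\end{itemize}

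In the remaining situation, where some pair is finite and so $p\ge2$, the form restricted to $\mathrm{span}(a,b)$ is positive definite. Any $2$-dimensional subspace then meets $\mathrm{span}(a,b)$ nontrivially, since $2+2>3$, and so contains a nonzero vector $v$ with $(v,v)>0$. A totally isotropic plane is therefore impossible.

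The main obstacle is the bookkeeping in case (2): it has to cover all pairs with $m=\infty$, including those with $(\cdot,\cdot)<-1$. The determinant sign computation handles this uniformly, and the existence of a positive-definite $2$-plane handles everything else.
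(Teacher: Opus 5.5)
Your proof is correct, but it takes a different route from the paper.

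The paper's proof stays inside the affine plane $V_1$. A line $L\subseteq Q\cap V_1$ must meet two of the three lines $L(a,b)$, $L(a,c)$, $L(b,c)$, so after relabelling it contains points $\widehat{a+kb}$ and $\widehat{a+lc}$. Isotropy forces $k(a,b)=-(1+k^2)/2$ and $l(a,c)=-(1+l^2)/2$, hence $k,l>0$. A parallelogram argument shows that any two points of such a line are orthogonal. The contradiction is then the single inequality $0=(a+kb,a+lc)\le 1+k(a,b)+l(a,c)=-(k^2+l^2)/2<0$.

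You instead pass to the $2$-dimensional totally isotropic subspace spanned by the line. You bound its dimension by $\min(p,q)+r$ and split on the Coxeter data:
\begin{itemize}
\item[(a)] if some $m_{st}<\infty$, the positive-definite plane $\R a\oplus\R b$ meets every plane nontrivially;
\item[(b)] if all $m_{st}=\infty$, then $\det B\le -4$, so the form is nondegenerate of signature $(2,1)$ and admits no isotropic plane.
\end{itemize}

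The paper's argument is more elementary: it needs no signature theory and uses only $(a,a)=1$ and $(a,b)\le 0$. Your argument is more structural. It proves the slightly stronger statement that $V$ contains no totally isotropic plane at all, and it isolates the real obstruction: every form arising in rank $3$ has maximal isotropic dimension at most $1$.

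Two minor remarks. Your case (1) is redundant, since the final dichotomy already gives $p\ge 2$ or $r=0$, and the argument could be shortened to that dichotomy alone. Also, the sentence ``Some pair, say $a,b$, has $1-(a,b)^2>0$'' reads as unconditional, although it is only one branch of the dichotomy. Neither your proof nor the paper's actually uses that $W$ is infinite.
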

\begin{proof}
Denote the set of simple roots for $W$ to be $\Pi=\{a, b, c\}$. Suppose, for a contradiction, that $Q\cap V_1$ contains a straight line $L$.
Then $L$ must intersect at least two of $L(a, b)$, $L(a, c)$, and $L(b, c)$, where
$$L(\alpha, \beta):=\{\lambda \alpha+(1-\lambda)\beta\mid \lambda\in \R\}$$
is the straight line passing through $\alpha$ and $\beta$.
By symmetry, performing a permutation of the simple roots if necessary, we may assume that $L$ intersects $L(a, b)$ and $L(a, c)$. 
Since $L$ intersects $L(a, b)$, we may assume that 
$$\widehat{a+kb}\in L, \quad \exists k\in \R.$$
Now because $L\subseteq Q$, we see that 
$$0=(a+kb, a+kb)=1+2k(a, b)+k^2.$$
Hence 
$$k(a, b)=-\frac{1+k^2}{2},$$
and, in particular, $k>0$ for $(a, b)\leq 0$.
Similarly, since $L$ intersects $L(a, c)$, we may assume that
$$\widehat{a+lc}\in L, \quad \exists l\in \R.$$
Again, because $L\subseteq Q$, we see that 
$$0=(a+lc, a+lc)=1+2l(a, c)+l^2.$$
Hence 
$$l(a, c)=-\frac{1+l^2}{2},$$
and, in particular, $l>0$ for $(a, c)\leq 0$.

Next, note that if $x, y\in L$ then there exists some $\mu\in \R$ with $\mu(x+y)\in L$ (indeed, consider the parallelogram with vertices the origin, $x$, $x+y$, and $y$, the diagonal passing through the origin and the vertex $x+y$ intersects $L$). Thus 
$$0=\mu^2(x+y, x+y).$$
Consequently, 
$$0=(x+y, x+y)=(x, x)+2(x, y)+(y,y)=2(x, y),$$
since $x, y$ are isotropic.  

Now combine the above two observations, we have
\begin{align*}
  0&=(a+kb, a+lc)=(a, a)+l(a, c) + k(a, b) +kl (b,c)  \\
   &\leq 1+l(a, c)+k(a,b)\quad \text{ $\because k, l >0$ and $(b, c)\leq 0$} \\
   &=1-\frac{1+l^2}{2}-\frac{1+k^2}{2}\\
   &=-\frac{l^2+k^2}{2} <0,
\end{align*}
a contradiction.  
\end{proof}

Equipped with the above characterisation of the normalised isotropic sets of rank-$3$ Coxeter groups, and in view of Remark~\ref{isotropicline}, we immediately have the following:
\begin{proposition}
Suppose that $W$ is a rank-$3$ infinite Coxeter group, and suppose that $(x_i)_{i\in \N}$ is an infinite injective sequence of positive roots totally ordered by dominance, that is, 
$$x_{i+1}\dom x_i \text{ and } x_{i+1}\neq x_i, \quad\forall i\in \N.$$
Then the corresponding normalised sequence $(\widehat{x_i})_{i\in \N}$ is convergent with a unique accumulation point.
\qed 
\end{proposition}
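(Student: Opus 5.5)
The statement asks us to show that, for an infinite Coxeter group of rank $3$, every infinite strict dominance chain $(x_i)_{i\in\N}$ has a normalised sequence $(\widehat{x_i})_{i\in\N}$ with exactly one accumulation point. The plan is to argue by contradiction, combining compactness of $\conv(\Pi)$ with Lemma~\ref{specialcase}, Remark~\ref{isotropicline} and Lemma~\ref{rank3}.

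\textbf{Existence of an accumulation point.} The roots $x_i$ are pairwise distinct, so $(x_i)_{i\in\N}$ is an infinite injective sequence of positive roots. Each $\widehat{x_i}$ lies in $\conv(\Pi)$, which is compact because $|\Pi|=3$. Hence $(\widehat{x_i})_{i\in\N}$ has at least one accumulation point, and every such point lies in $E\subseteq Q\cap V_1$. Consequently, convergence of $(\widehat{x_i})_{i\in\N}$ is equivalent to uniqueness of its accumulation point. Indeed, if there is only one accumulation point and the sequence fails to converge to it, some subsequence stays a fixed distance away from it; by compactness that subsequence has a further accumulation point, which would be a second one.

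\textbf{Uniqueness.} Suppose $\eta_1\neq\eta_2$ are two accumulation points. Both lie in $E$, so $(\eta_1,\eta_1)=(\eta_2,\eta_2)=0$, and Lemma~\ref{specialcase} gives $(\eta_1,\eta_2)=0$. Remark~\ref{isotropicline} then shows that every point $\lambda\eta_1+(1-\lambda)\eta_2$ with $\lambda\in\R$ is isotropic. Since $|\eta_1|_1=|\eta_2|_1=1$ and $|\cdot|_1$ is linear, each such point also satisfies $|\lambda\eta_1+(1-\lambda)\eta_2|_1=1$. So the whole line through $\eta_1$ and $\eta_2$ lies in $Q\cap V_1$, contradicting Lemma~\ref{rank3}. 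Hence the accumulation point is unique.

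The only point that needs care is that the line produced by Remark~\ref{isotropicline} lies inside $V_1$ and not merely in $Q$, since Lemma~\ref{rank3} is a statement about $Q\cap V_1$. This is exactly what the affine-combination form and the linearity of $|\cdot|_1$ provide. Everything else is routine.
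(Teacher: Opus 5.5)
Your proof is correct and is the paper's argument: Lemma~\ref{specialcase} and Remark~\ref{isotropicline} place the whole line through two distinct accumulation points inside $Q\cap V_1$, which contradicts Lemma~\ref{rank3}. You also make explicit the steps the paper leaves implicit, namely that compactness of $\conv(\Pi)$ gives existence of an accumulation point and makes convergence equivalent to its uniqueness, and that linearity of $|\cdot|_1$ keeps the line inside $V_1$.
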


\begin{proposition}
\label{suppcon}
Let $(x_i)_{i\in \N}\subset \Phi^+$ be an infinite injective sequence of positive roots such that $\lim_{i\to\infty}\widehat{x_i} =\eta$, 
and $(x_i, \eta)\geq 0$ for all $i\in \N$.  Then there exists some $N\in \N$ with
$$\supp(x_i) =\supp(\eta)$$
for all $i\geq N$. In particular, $\supp(\eta)$ is connected, and the sequence of supports $(\supp(x_i))_{i\in \N}$ stabilises.   
\end{proposition}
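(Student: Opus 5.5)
We prove two inclusions for all large $i$: $\supp(\eta)\subseteq\supp(x_i)$ and $\supp(x_i)\subseteq\supp(\eta)$.

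\emph{First inclusion.} Write $\eta=\sum_{a\in\Pi}\coeff_a(\eta)a$ and set
$$c:=\min\{\coeff_a(\eta)\mid a\in\supp(\eta)\}>0.$$
Since $\widehat{x_i}\to\eta$ coordinatewise, there is $N_1$ such that $\coeff_a(\widehat{x_i})>c/2$ for every $a\in\supp(\eta)$ and every $i\geq N_1$. Hence $\supp(\eta)\subseteq\supp(x_i)$ for $i\geq N_1$. Note that $\eta\neq0$ because $|\eta|_1=1$, so $\supp(\eta)\neq\emptyset$.

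\emph{Second inclusion.} Suppose $\supp(x_i)\not\subseteq\supp(\eta)$. Then Lemma~\ref{neg} gives $(x_i,\eta)\leq 0$, while the hypothesis gives $(x_i,\eta)\geq0$. So the only constraint is $(x_i,\eta)=0$, which is not yet a contradiction and needs a further argument.

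To get one, use that $\eta$ lies in $U^*$ (indeed $E\subseteq\widehat{U^*}$). Pick $a\in\supp(x_i)\setminus\supp(\eta)$. For every $b\in\supp(\eta)$ we have $(a,b)\leq0$, so $(a,\eta)\leq0$. I would exploit the finiteness statement of Proposition~\ref{epsilon} together with the depth behaviour of Lemma~\ref{depthchange}, which controls how $\supp(x_i)$ can contain simple roots lying outside $\supp(\eta)$.

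A cleaner route is a limiting argument. Let $J:=\supp(\eta)$ and write $x_i=x_i|_J+x_i|_{\Pi\setminus J}$. Convergence $\widehat{x_i}\to\eta$ forces
$$\frac{|x_i|_{\Pi\setminus J}|_1}{|x_i|_1}\to0 .$$
Expanding $(x_i,x_i)=1$ and dividing by $|x_i|_1^2$ recovers $(\eta,\eta)=0$, which is consistent but gives nothing new. So this route needs a genuinely different ingredient. The intended ingredient is probably the reflection $r_{x_i}\eta=\eta-2(x_i,\eta)x_i$ already used in Lemma~\ref{neg}; but since $(x_i,\eta)=0$ there, that reflection fixes $\eta$.

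I therefore expect the decisive step to be the following: if infinitely many $x_i$ have support strictly larger than $J$ yet satisfy $(x_i,\eta)=0$, derive a contradiction. This is the main obstacle. I would attack it by passing to a subsequence on which $\supp(x_i)=S\supsetneq J$ is constant, which is possible since $\Pi$ is finite. Then consider the $W_S$-structure and Lemma~\ref{int}, applied to $\eta$ viewed inside $\R S$, where the coordinates of $\eta$ on $S\setminus J$ vanish. The hope is to show that $\eta$ must then have full support on a connected component of $S$, which would contradict $\supp(\eta)=J$.

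\emph{Connectedness and stabilisation.} Once $\supp(x_i)=\supp(\eta)$ holds for all $i\geq N$, connectedness of $\supp(\eta)$ follows because the support of any root is connected. Stabilisation of the supports $(\supp(x_i))_{i\in\N}$ is then immediate.
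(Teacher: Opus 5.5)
\textbf{There is a genuine gap: the second inclusion is never proved.} Your first inclusion is fine, and so is the final deduction of connectedness and stabilisation. But the second inclusion, $\supp(x_i)\subseteq\supp(\eta)$ for large $i$, is the real content of the proposition. After using Lemma~\ref{neg} to reduce to $(x_i,\eta)=0$, you only express the hope that $\eta$ has full support on a component of $S$.

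The tool you name cannot deliver this. Lemma~\ref{int} applied to $\eta$ only describes the components of $J=\supp(\eta)$ and never involves the $x_i$. Nothing intrinsic to $\eta$ can force $\supp(\eta)=S$: a limit root of an infinite proper standard parabolic subgroup $W_J$ lies in $U^*\cap Q$ with support inside $J\subsetneq S$.

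Convergence alone cannot do it either, so your $(x_i,x_i)=1$ computation was bound to be inconclusive. Take $(a,b)=-1$ and $c$ adjacent to $a$ or $b$. The roots $(r_ar_b)^nc$ have support $\{a,b,c\}$, yet $\widehat{(r_ar_b)^nc}\to\widehat{a+b}$. They are excluded only because they pair negatively with $a+b$, which is fixed by $r_ar_b$. So the contradiction must come from combining $(x_i,\eta)\geq0$ with the convergence, and in a way that goes beyond the first inclusion.

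\textbf{The missing idea} (this is how the paper argues) is a case split on $\pos(\eta)$ followed by a depth descent.

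\emph{Case $\pos(\eta)=\emptyset$.} Then $(a,\eta)\leq 0$ for all $a\in\Pi$. So $0\leq(x_i,\eta)=\sum_{a\in\supp(x_i)}\coeff_a(x_i)(a,\eta)$ forces $(a,\eta)=0$ for every $a\in\supp(x_i)$. If $\supp(x_i)\supsetneq\supp(\eta)$, connectedness of $\supp(x_i)$ gives $b\in\supp(x_i)\setminus\supp(\eta)$ and $c\in\supp(\eta)$ with $(b,c)<0$. Then $(b,\eta)\leq\coeff_c(\eta)(b,c)<0$, a contradiction. You already had $(b,\eta)\leq 0$; what was missing is the adjacency giving strictness.

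\emph{Case $\pos(\eta)\neq\emptyset$.} Choose $w_0\in W_{\supp(\eta)}$ minimising $|\supp(w_0\eta)|$. Minimality gives $\supp(ww_0\eta)=\supp(w_0\eta)$ for every $w$ generated by reflections in $\supp(w_0\eta)$. Also $w_0x_i\in\Phi^+$ for large $i$, since $N(w_0)$ is finite.
\begin{itemize}
\item If $\pos(w_0\eta)=\emptyset$, apply the first case to the roots $w_0x_i$. Their normalisations converge to $\widehat{w_0\eta}$, and $(w_0x_i,w_0\eta)=(x_i,\eta)\geq0$.
\item Otherwise some $a_1\in\supp(w_0\eta)$ has $(w_0\eta,a_1)>0$. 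Here the convergence is used: $(w_0x_i,a_1)>0$ for large $i$.
\end{itemize}

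\emph{The descent.} Start with $\beta_1=w_0x_i$. Suppose $a_j\in\supp(w_0\eta)$ satisfies $(\beta_j,a_j)>0$. Either $\beta_j=a_j$, or $\beta_{j+1}:=r_{a_j}\beta_j$ is a positive root of smaller depth by Lemma~\ref{depthchange}. In the latter case:
\begin{itemize}
\item $(\beta_{j+1},a_j)<0$;
\item $(\beta_{j+1},r_{a_j}\cdots r_{a_1}w_0\eta)=(x_i,\eta)\geq0$;
\item $a_j$ still has positive coefficient in $r_{a_j}\cdots r_{a_1}w_0\eta$, because that support is unchanged.
\end{itemize}
Hence some $a_{j+1}\in\supp(w_0\eta)$ satisfies $(\beta_{j+1},a_{j+1})>0$. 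Depth strictly decreases, so the descent ends with $w_0x_i=r_{a_1}\cdots r_{a_{k-1}}a_k$, all $a_j\in\supp(w_0\eta)$. Therefore $\supp(x_i)\subseteq\supp(w_0x_i)\cup\supp(\eta)=\supp(\eta)$.
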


\begin{proof}
Note that $\lim_{i\to\infty}\widehat{x_i}=\eta$ implies that $\supp(\eta)\subseteq \supp(x_i)$ for $i$ large enough. (Indeed, it is possible that upon normalisation, some simple roots appearing in the expression of $x_i$ may have their contribution overwhelmed by the sheer magnitude of $|x_i|_1$).

It remains to show that $\exists N\in \N$ with $\supp(x_i)\subseteq \supp(\eta)$, for all $i\geq N$.
There are two situations that we need to consider separately, the first being that $\pos(\eta)=\emptyset$ and the second being that 
$\pos(\eta)\neq \emptyset$.  

First, suppose that $\pos(\eta)=\emptyset$. Then the assumption $(x_i, \eta)\geq 0$ for all $i\in \N$ implies that
\begin{equation}
\label{one}
\text{$(a, \eta)=0$ for all $a\in \supp(x_i)$ and all $i\in \N$.}
\end{equation}
Now by the opening remark, we may take $N\in \N$ large enough such that 
$$\supp(\eta)\subseteq\supp(x_i), \quad \forall i\geq N.$$
Fix one such $i$, and suppose that $\supp(x_i)\setminus\supp(\eta)\neq \emptyset$. Since $\supp(x_i)$ is connected ($x_i$ being a root), there exists some $b\in \supp(x_i)\setminus\supp(\eta)$ with $(b,c)<0$ for some $c\in \supp(\eta)$.
Write $\eta=\sum_{d\in \supp(\eta)}\coeff_d(\eta)d$, and we may conclude that all those $\coeff_d(\eta)>0$, since $\eta\in U^*$.
Then 
\begin{align*}
  0=(b, \eta)&=\coeff_c(\eta)(b, c)+\sum_{d\in \supp(\eta)\setminus\{c\}}\coeff_d(\eta)(b, d)\\
  &\leq \coeff_c(\eta)(b, c)<0,
\end{align*}
which is a contradiction, whence $\supp(x_i)=\supp(\eta)$ in the case when $\pos(\eta)=\emptyset$.

Next, suppose that $\pos(\eta)\neq \emptyset$. Set $W':=\langle r_a\mid a\in \supp(\eta)\rangle$, and choose $w_0\in W'$ such that
$$|\supp(w_0\eta)|=\min_{w\in W'}\{\,|\supp(w\eta) |\,\}.$$
Write $w_0=r_{\alpha_1}\cdots r_{\alpha_m}$ where $\alpha_1, \ldots, \alpha_m\in \supp(\eta)$, and define the corresponding
$$I:=\{\alpha_1, \ldots, \alpha_m\}\subseteq \supp(\eta),$$
and 
$$W'':=\langle\, r_a \mid a\in \supp(w_0 \eta)\,\rangle \subseteq W'.$$
Note that the definition of $W''$ has the consequence that 
$$\supp(ww_0\eta)\subseteq\supp(w_0\eta), \quad \forall w\in W''.$$ 
This together with the minimality of $|\supp(w_0\eta)|$ also implies that 
$$\supp(w_0\eta)\subseteq\supp(ww_0\eta),\quad  \forall w\in W''\subseteq W'.$$  
Consequently,
\begin{equation}
\label{conn0}
\supp(ww_0\eta)=\supp(w_0\eta), \quad \forall w\in W''.  
\end{equation} 
Now if $\pos(w_0\eta)=\emptyset$ then as before (with $w_0\eta$ in place of $\eta$ and $w_0x_i$ in place of $x_i$) we conclude that there exists some $N_1\in \N$ with
\begin{equation}\label{conn}
\supp(w_0 \eta)=\supp(w_0x_i), \quad \forall i\geq N_1.  
\end{equation}
Else, $\pos(w_0\eta)\neq\emptyset$, and there exists $a_1\in \supp(w_0\eta)$ with $(w_0\eta, a_1)>0$. Since $\lim_{i\to \infty}\widehat{x_i}=\eta$  and $(\,,\,)$ is continuous, it follows that there exists some $N_2\in \N$ with
$$(w_0 x_i, a_1) >0, \quad\forall i\geq N_2,$$
moreover, the finiteness of $N(w_0)$ and the injectivity of the $(x_i)_{i\in \N}$ imply that by enlarging $N_2$ if necessary we may also ensure  $w_0x_i\in \Phi^+$, for all $i\geq N_2$. Now fix such an $i$, then 
$\dep(r_{a_1}w_0x_i)=\dep(w_0 x_i)-1$. Note that by (\ref{conn0}) we have
\begin{equation}
\label{conn1}
  a_1\in \supp(w_0\eta)=\supp(r_{a_1}w_0\eta),
\end{equation}
and also note that 
\begin{equation}
\label{conn2}
(r_{a_1}w_0x_i, r_{a_1}w_0\eta)=(x_i, \eta)\geq 0,
\end{equation}
while
\begin{equation}\label{conn3}
 (r_{a_1} w_0 x_i, a_1) =-(w_0x_i, a_1 )<0. 
\end{equation}
Now (\ref{conn1}), (\ref{conn2}) and (\ref{conn3}) together imply that there exists 
$$a_2\in \supp(r_{a_1} w_0\eta) =\supp(w_0\eta)$$ 
with $(r_{a_1} w_0x_i, a_2)>0$. Then 
$$\dep(r_{a_2} r_{a_1} w_0 x_i)=\dep(r_{a_1}w_0 x_i)-1 =\dep(w_0x_i)-2.$$
Repeat this process, and after a finite number of iterations, we shall see that 
$$r_{a_{k-1}} \cdots r_{a_1} w_0 x_i =a_k\in\supp(w_0\eta) \subset \Pi,$$
where $a_1, \ldots, a_{k-1}\in \supp(w_0 \eta)$. This together with (\ref{conn}) enables us to conclude that in either case of $\pos(w_0\eta)=\emptyset$ or $\pos(w_0\eta)\neq\emptyset$, by setting $N=\max\{N_1, N_2\}$,
$$\supp(w_0 x_i)\subseteq \supp(w_0\eta), \quad \forall i\geq N,$$
and therefore for all $i\geq N$
\begin{align*}
\supp(x_i) &\subseteq \supp(w_0 x_i)\cup I\\
           &\subseteq\supp(w_0 \eta)\cup \supp(\eta)\\
           &= \supp( \eta),
\end{align*}
whence $\supp(x_i) =\supp(\eta)$ for all $i\geq N$ for some $N\in \N$ in the case when $\pos(\eta)\neq \emptyset$ as well.
\end{proof}

\begin{remark}
\label{rmk}
Suppose that $(x_i)_{i\in \N}\subset \Phi^+$ is an infinite injective sequence of positive roots totally ordered by dominance, that is, 
$$x_{i+1}\dom x_{i}, \text{ and } x_{i+1}\neq x_i,\quad \forall i\in \N.$$
Suppose further that $\lim_{i\to \infty}\widehat{x_i}=\eta$. Then for each $j\in \N$, the continuity of the bilinear form $(\,,\,)$ ensures that
$$(x_j, \eta)=\lim_{i\to \infty}(x_j, \widehat{x_i})\geq 0.$$
Now Proposition~\ref{domU} yields that $x_{i+1}-x_i \in U^*$ for all $i\in \N$, which in turn implies that $\supp(x_i)\subseteq \supp(x_{i+1})$ for all $i\in \N$. This nested property of the successive supports along a dominance chain and position~\ref{suppcon} together establish the following: 
\end{remark}

\begin{theorem}
  \label{support}
  Suppose that $(x_i)_{i\in \N}\subset \Phi^+$ is an infinite injective sequence of positive roots totally ordered by dominance, that is, 
$$x_{i+1}\dom x_{i}, \text{ and } x_{i+1}\neq x_i,\quad \forall i\in \N.$$
Suppose further that $\eta$ is an accumulation point for the normalised sequence $(\widehat{x_i})_{i\in \N}$. Then there exists some $N\in \N$ with 
$$\supp(x_i)=\supp(\eta), \quad \forall i\geq N.$$
In particular, all possible accumulation points for the normalised sequence $(\widehat{x_i})_{i\in \N}$ have the same connected support.\qed 
\end{theorem}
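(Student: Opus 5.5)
The plan is to combine Proposition~\ref{suppcon}, applied to a convergent subsequence, with the nested-support property of dominance chains from Remark~\ref{rmk}.

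First, take a subsequence $(x_{i_k})_{k\in\N}$ with $\lim_{k\to\infty}\widehat{x_{i_k}}=\eta$. This subsequence is still an infinite injective sequence of positive roots. To apply Proposition~\ref{suppcon} I need $(x_{i_k},\eta)\geq 0$ for all $k$. Fix $j$. For all $k$ with $i_k> j$ we have $x_{i_k}\dom x_j$, so Lemma~\ref{domandform} gives $(x_j,x_{i_k})\geq 1>0$. Dividing by $|x_{i_k}|_1>0$ and letting $k\to\infty$ yields
$$(x_j,\eta)=\lim_{k\to\infty}(x_j,\widehat{x_{i_k}})\geq 0 .$$
This holds for every $j\in\N$, and in particular for each $x_{i_k}$. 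Proposition~\ref{suppcon} then supplies $K$ such that $\supp(x_{i_k})=\supp(\eta)$ for all $k\geq K$, and tells us that $\supp(\eta)$ is connected.

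Second, I will pass from the subsequence to the whole sequence using monotonicity of supports. By Proposition~\ref{domU}, $x_{i+1}-x_i\in U^*\subseteq\PLC(\Pi)\cup\{0\}$. Hence $\coeff_a(x_{i+1})\geq\coeff_a(x_i)\geq 0$ for every $a\in\Pi$, so $\supp(x_i)\subseteq\supp(x_{i+1})$. The supports therefore form an increasing chain of subsets of the finite set $\Pi$. Set $N:=i_K$. For any $i\geq N$, choose $k\geq K$ with $i_k\geq i$. Then
$$\supp(\eta)=\supp(x_{i_K})\subseteq\supp(x_i)\subseteq\supp(x_{i_k})=\supp(\eta),$$
so $\supp(x_i)=\supp(\eta)$ for all $i\geq N$.

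Finally, since the supports of the chain stabilise to a single set $S$ that does not depend on the choice of subsequence, every accumulation point $\eta'$ satisfies $\supp(\eta')=S$. This set is connected by the above. There is no serious obstacle, because the substantive work is already done in Proposition~\ref{suppcon}. The only point needing care is the nonnegativity hypothesis $(x_{i_k},\eta)\geq 0$, which the first step establishes.
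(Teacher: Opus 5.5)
Your proposal is correct and follows essentially the paper's route. The paper deduces the theorem from Remark~\ref{rmk} (nonnegativity of $(x_j,\eta)$ by continuity, and nested supports via Proposition~\ref{domU}) combined with Proposition~\ref{suppcon}. Your explicit passage to a convergent subsequence, followed by the monotone-support sandwich, fills in a step the paper leaves implicit: Remark~\ref{rmk} is phrased for a normalised sequence that converges to $\eta$, not for an arbitrary accumulation point.
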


\begin{lemma}
\label{domineq}
Suppose that $x\dom y\dom z$. Then
$$(x, z)\geq (x, y)\text{ and } (x, z)\geq (y,z).$$
\end{lemma}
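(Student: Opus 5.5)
We will use the characterisation of dominance via the dual Tits cone, Proposition~\ref{domU}, together with the non-positivity of the form on $U^*$, Proposition~\ref{leq0}. The statement is implicitly about roots $x,y,z\in\Phi$, and the chain $x\dom y\dom z$ gives $x-y\in U^*$ and $y-z\in U^*$. We will also use $x\dom z$, which holds by transitivity of dominance (immediate from the definition), so that $x-z\in U^*$ as well. Recall also that $(u,u)=1$ for every root $u$, since $W$ preserves the form and $(a,a)=1$ for $a\in\Pi$.

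For the first inequality, apply Proposition~\ref{leq0} to the pair $x-y,\,y-z\in U^*$:
$$
0\geq (x-y,\,y-z)=(x,y)-(x,z)-(y,y)+(y,z)=(x,y)-(x,z)-1+(y,z).
$$
Hence $(x,z)\geq (x,y)+(y,z)-1$. Lemma~\ref{domandform} gives $(y,z)\geq 1$, since there is dominance between $y$ and $z$. Therefore $(x,z)\geq (x,y)$.

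The second inequality is symmetric. Lemma~\ref{domandform} also gives $(x,y)\geq 1$, and the same displayed inequality then yields $(x,z)\geq (y,z)+(x,y)-1\geq (y,z)$.

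There is no serious obstacle. The only points requiring care are the expansion of $(x-y,y-z)$, the use of $(y,y)=1$, and the remark that the statement implicitly concerns roots. If one prefers not to invoke transitivity, note that $x-z$ is never actually needed: the argument uses only $x-y,\,y-z\in U^*$.
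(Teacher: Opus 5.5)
Your argument is correct, but it is not the paper's route. The paper first sets aside $x=y$ or $y=z$ as trivial. It then invokes \cite[Proposition~4.11]{FU2} to obtain $w\in W$ with $wx\in\Phi^+$, $wy\in\Phi^-$ and $(w(x-y),a)\leq 0$ for all $a\in\Pi$, so that $w(x-y)\in\mathscr{K}$. Since $y\dom z$ forces $wz\in\Phi^-$, a sign argument gives $(x-y,z)=(w(x-y),wz)\geq 0$, and the other inequality is handled the same way.

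You instead use only $x-y,\,y-z\in U^*$ (Proposition~\ref{domU}) and the non-positivity of the form on $U^*$ (Proposition~\ref{leq0}). This is the same computation the paper itself makes later, in the proof of Proposition~\ref{eq1}. Your route needs no case split, and it avoids the stronger external fact that a separating $w$ moves $x-y$ into the fundamental chamber. It also yields a single sharper inequality, $(x,z)\geq (x,y)+(y,z)-1$. This exceeds $\max\{(x,y),(y,z)\}$ by $\min\{(x,y),(y,z)\}-1\geq 0$, and is itself implied by Proposition~\ref{geq}, since $(x,y)(y,z)-\bigl((x,y)+(y,z)-1\bigr)=\bigl((x,y)-1\bigr)\bigl((y,z)-1\bigr)\geq 0$. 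Even the direction of Lemma~\ref{domandform} you need follows from the same tool, because $(x-y,x-y)\leq 0$ is equivalent to $(x,y)\geq 1$. So your proof rests on Propositions~\ref{domU} and~\ref{leq0} alone.

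In exchange, the paper's sign argument gives each linear inequality, such as $(x-y,z)\geq 0$, directly, without using the values $(y,y)=1$ or $(y,z)\geq 1$.
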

\begin{proof}
Note that the claimed result certainly holds if either $x=y$ or $y=z$. Hence we may assume that $x\neq y\neq z$.
Since $x\dom y$, it follows from \cite[Proposition 4.11]{FU2} that there exists some $w\in W$ with
$wx\in \Phi^+$ and $wy\in \Phi^-$ and moreover
$$(w(x-y), a)\leq 0,\,\,\forall a\in \Pi.$$ 
Now since $y\dom z$, it follows from $wy\in \Phi^-$ that $w z\in \Phi^-$ too, and consequently, 
$$(x-y, z)=(w(x-y), wz)\geq 0.$$
Whence $(x, z)\geq (y, z)$.

In an entirely similar manner, one can prove that $(x, z)\geq (x, y)$.
 \end{proof}

\begin{proposition}
\label{eq1}
Suppose that $x\dom y\dom z$ with $x\neq y\neq z$. 
\begin{itemize}
\item[(i)]If $(x, z)=(x, y)$ then 
$(x, z)=(x, y)=(y, z)=1$,
and the reflection subgroup $W':=\langle r_x, r_y, r_z\rangle$ is affine dihedral. In particular, $x\in \R y\oplus \R z$. 
\item[(ii)]If $(x, z)=(y, z)$ then 
$(x, z)=(x, y)=(y, z)=1$,
and the reflection subgroup $W':=\langle r_x, r_y, r_z\rangle$ is affine dihedral. In particular, $x\in \R y\oplus \R z$.
\end{itemize}
\end{proposition}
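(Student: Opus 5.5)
The plan is to use Proposition~\ref{leq0} twice: first to force one of the pairings to equal $1$, and then, applied to a translation inside an affine dihedral subgroup, to force the remaining pairing to equal $1$. I treat (i) in detail; (ii) is the mirror image.

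First record the basic inequalities. By Lemma~\ref{domandform} and positivity of dominance along a chain (dominance is transitive, so $x\dom z$ as well), we have $(x,y),(y,z),(x,z)\geq 1$. By Proposition~\ref{domU}, $x-y$, $y-z$ and $x-z$ all lie in $U^*$.

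For (i), set $c:=(x,z)=(x,y)\geq 1$. Proposition~\ref{leq0} gives
$$0\geq (x-y,\,y-z)=(x,y)-(x,z)-1+(y,z)=(y,z)-1,$$
so $(y,z)=1$. Hence $\langle r_y,r_z\rangle$ is affine dihedral, and $\delta:=y-z\in U^*\cap Q$ is nonzero. Moreover $(\delta,y)=(\delta,z)=0$ and $(x,\delta)=c-c=0$.

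Now consider the translation $t:=r_zr_y$. A direct computation using $(y,z)=1$ gives $t(y)=y-2\delta$. Using $(x,y)=(x,z)=c$, it gives $t(x)=x-2c\delta$. Therefore
$$t^n(x-y)=(x-y)-2n(c-1)\delta\qquad(n\in\N).$$
Since $U^*$ is $W$-invariant, every $t^n(x-y)$ lies in $U^*\subseteq \PLC(\Pi)\cup\{0\}$. If $c>1$, then because $\delta\in\PLC(\Pi)$ is nonzero, some simple-root coefficient of $t^n(x-y)$ becomes negative for $n$ large, which is a contradiction. Hence $c=1$, and all three pairings equal $1$.

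For (ii), with $c:=(x,z)=(y,z)$, the same first step applied to $(x-y,y-z)\leq 0$ yields $(x,y)=1$. Then $\delta':=x-y$ is isotropic. Applying the translation $r_yr_x$ (or $r_xr_y$, choosing the direction that subtracts $\delta'$) to $y-z\in U^*$ gives $(y-z)-2n(c-1)\delta'$ up to sign conventions, and again forces $c=1$.

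The remaining claim, that $W'=\langle r_x,r_y,r_z\rangle$ is affine dihedral with $x\in\R y\oplus\R z$, is where I expect the main obstacle. At this point the Gram matrix of $x,y,z$ is the all-ones matrix, so $x-y$ and $y-z$ are both isotropic, both lie in $U^*$, and are mutually orthogonal. The route I would try first is to show $x-y\in\R\delta$. The idea is to use that $W'$ is a reflection subgroup of rank at most $3$ on whose span the form is positive semidefinite of rank $1$. By Dyer's theory of canonical generators, $W'$ is then a Coxeter group with simple roots pairing $\leq 0$ and Gram matrix of rank $1$; if $W'$ had rank $3$, its canonical generators would have to be pairwise orthogonal or form a degenerate configuration incompatible with all pairings being $1$. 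This would reduce $W'$ to an infinite dihedral group of affine type, i.e.\ $r_x\in\langle r_y,r_z\rangle$. Then $x$ is a root of that dihedral subgroup, so $x\in\R y\oplus\R z$. If the canonical-generator argument proves awkward, a fallback is a direct computation: write $x=y+\delta'$ with $\delta'\in U^*\cap Q$ and $\delta'\perp\delta$. Then the translations of $\langle r_y,r_z\rangle$ act on $\delta'$ by adding multiples of $\delta$, and I would combine this with Proposition~\ref{epsilon} and finiteness of root lengths (Lemma~\ref{normbd}) to pin down $\delta'\in\R\delta$.
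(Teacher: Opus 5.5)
Your argument that all three pairings equal $1$ is correct, and it differs from the paper's. The paper gets $(y,z)=1$ exactly as you do. It then shows $W'$ is affine via limit roots: $W'$ fixes $\widehat{y-z}$, so $E(W')$ is a singleton, which a non-affine $W'$ would contradict through a non-affine dihedral subgroup with two limit roots. You instead apply powers of the translation $r_zr_y$ (resp.\ $r_xr_y$) to $x-y$ (resp.\ $y-z$) and use $W$-invariance of $U^*\subseteq\PLC(\Pi)\cup\{0\}$, which is self-contained.

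The gap is the final assertion ($W'$ affine dihedral and $x\in\R y\oplus\R z$). You only sketch it, and the sketch does not work as written.

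\textbf{The claimed equivalence is false.} The step ``affine dihedral, i.e.\ $r_x\in\langle r_y,r_z\rangle$'' fails. In $\widetilde{A}_1$ with simple roots $\alpha,\beta$ and $\delta=\alpha+\beta$, take $x=\alpha+3\delta$, $y=\alpha+2\delta$, $z=\alpha$. The hypotheses of (i) hold, but the roots of $\langle r_y,r_z\rangle$ are $\pm(\alpha+2k\delta)$, so $r_x\notin\langle r_y,r_z\rangle$.

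\textbf{The canonical-generator step names the wrong mechanism.} Pairwise orthogonal canonical roots cannot occur here. What is needed is the following.
\begin{itemize}
\item Since the Gram matrix of $x,y,z$ is all ones, on $S:=\R x+\R y+\R z$ the form is $(u,v)=f(u)f(v)$. Here $f$ is the linear form with $f(x)=f(y)=f(z)=1$; it is well defined, since any relation $\sum c_ix_i=0$ forces $\sum c_i=0$.
\item Every root $\alpha$ with $r_\alpha\in W'$ lies in $S$, because $wv-v\in S$ for $w\in W'$. Hence $f(\alpha)=\pm1$, and any two roots of $W'$ pair to $\pm1$.
\item Distinct canonical simple roots of $W'$ pair non-positively, hence to $-1$. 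Three of them would require $f(\alpha_1)f(\alpha_2)=f(\alpha_1)f(\alpha_3)=f(\alpha_2)f(\alpha_3)=-1$, which is impossible.
\item So $W'$ has exactly two canonical generators, pairing to $-1$, and is affine dihedral. All its roots, $x$ included, lie in the plane of its canonical roots. That plane contains the independent roots $y,z$, hence equals $\R y\oplus\R z$.
\end{itemize}
The paper instead notes that $x-y$ and $y-z$ both lie in the radical of the form on $S$, which is one-dimensional for an irreducible affine group.

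\textbf{Your fallback fails for a concrete reason.} Once the pairings are $1$, we have $(\delta',y)=(\delta',z)=0$. So $r_y$ and $r_z$ both \emph{fix} $\delta'=x-y$, and no translation of $\langle r_y,r_z\rangle$ moves it.

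An elementary repair is to translate a root rather than $\delta'$. On $S$ one has $r_zr_y(v)=v-2f(v)\delta$ and $r_xr_y(v)=v+2f(v)\delta'$, and these commute. So $z+2m\delta+2n\delta'$ is a root for all $m,n\in\Z$. If $\delta,\delta'$ were linearly independent, these roots would be pairwise distinct. Either $m|\delta|_1+n|\delta'|_1=0$ for infinitely many $(m,n)$, or these values are dense in $\R$. In both cases infinitely many of these roots would be positive with $|\cdot|_1<2|z|_1$, contradicting Lemma~\ref{normbd}. This gives $x\in\R y\oplus\R z$ without Dyer's theory. Dihedrality of $W'$ still needs the canonical-generator argument above or an equivalent.
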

\begin{proof}
We only need to prove (i), for an entirely similar proof also applies to (ii).

For (i), first, note that $W'$ is irreducible as a Coxeter group. Indeed, the nonzero values of 
$(x, y)$, $(x, z)$ and $(y, z)$, as ensured by the dominance between these three generating roots, imply that these three roots belong to the same irreducible component of the root subsystem of $W'$.
Next, since $x\dom y\dom z$, it follows from  \cite[Proposition 4.10]{FU2} that $x-y\in  U^*$ and $y-z\in U^*$.
Then Proposition~\ref{leq0} yields that $(x-y, y-z)\leq 0$. That is,
$$0\geq (x-y, y-z)=\underbrace{(x, y-z)}_{=(x, y)-(x,z)=0}-\overbrace{(y, y-z)}^{=1-(y, z)\leq 0},$$ 
and consequently, $(y, y-z)=0$, which in turn gives $(y, z)=1$. Then 
$$(x, y-z)=0, \quad (y, y-z)=0 \quad\text{and}\quad(z, y-z)=0.$$
That is $y-z$ is in the radical of the bilinear form $(\,,\,)$ restricted to the subspace spanned by $x$, $y$ and $z$.
Furthermore, in the irreducible reflection subgroup $W'$
$$w\cdot (\widehat{y-z})=\widehat{y-z}, \quad \forall w\in W',$$
which in turn establishes that $E(W')=\{\widehat{y-z}\}$ is a singleton (following \cite[Theorem 3.1]{DHR13}).
Now suppose for a contradiction that $W'$ is non-affine. Then it follows from \cite[Proposition 4.12]{FRX} that there exists a non-affine dihedral reflection subgroup $W''\subset W'$ with $|E(W'')|=2$, contradicting $E(W'')\subseteq E(W')$. Consequently, $W'$ is affine, whence 
$$(x, y)=(y, z)=(x, z)=1.$$
Then $(x-y, x)=(x-y, y)=(x-y, z)=0$, and $x-y$ is also in the radical of the bilinear form $(\,,\,)$ restricted to the subspace spanned by $x$, $y$ and $z$. Now if $W'$ is of rank $3$, then the set $\{x, y, z\}$ is linearly independent, and the radical of the bilinear form $(\,,\,)$ restricted to the subspace spanned by $x$, $y$ and $z$ is of dimension $2$ spanned by $x-y$ and $y-z$. On the other hand, it is well known (for instance, see \cite[Section 2.6]{HM}) that the radical of the restriction of the bilinear form $(\,,\,)$ on the subspace spanned by $x, y$ and $z$ is $1$-dimensional if $W'$ is irreducible affine, and we have a contradiction, whence $W'$ is dihedral.  
\end{proof}

In fact, Lemma~\ref{domineq} can be strengthened to the following:
\begin{proposition}
\label{geq}
Suppose that $x\dom y\dom z$. Then 
$$(x, z)\geq (x, y)(y, z).$$
\end{proposition}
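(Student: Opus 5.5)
The plan is to rewrite the desired inequality as a statement about pairings of elements of $U^*$, and then conclude with Proposition~\ref{leq0}. Rearranging,
$$
(x,z)-(x,y)(y,z)=\bigl(x,\;z-(y,z)y\bigr),
$$
so it suffices to show that $\bigl(x,\,z-(y,z)y\bigr)\geq 0$. The vector $z-(y,z)y$ is the midpoint of $z$ and $r_y z=z-2(y,z)y$. I will put $z':=-r_yz=2(y,z)y-z$. A one-line check gives
$$
z-(y,z)y=\tfrac12\bigl((z-y)+(y-z')\bigr).
$$

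The first step produces two elements of $U^*$ attached to $y$. Since $y\dom z$, Proposition~\ref{domU} gives $y-z\in U^*$. Because $U^*$ is $W$-invariant, applying $r_y$ gives
$$
r_y(y-z)=-y-r_yz=z'-y\in U^*.
$$
Together with $x-y\in U^*$ (from $x\dom y$), we therefore have three elements $x-y$, $y-z$ and $z'-y$ of $U^*$. Note that neither $x\neq y$ nor $y\neq z$ is needed here, since $0\in U^*$.

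The second step splits $x=(x-y)+y$ and expands:
$$
2\bigl(x,\,z-(y,z)y\bigr)=-(x-y,\,y-z)-(x-y,\,z'-y)+\bigl(y,\,z-y\bigr)+\bigl(y,\,y-z'\bigr).
$$
By Proposition~\ref{leq0}, the first two terms are each $\geq 0$, because each is minus a pairing of two elements of $U^*$. For the last two terms, $(y,y)=1$ and
$$
(y,z')=2(y,z)(y,y)-(y,z)=(y,z),
$$
so their sum is $(y,z)-1+1-(y,z)=0$. Hence $(x,z)-(x,y)(y,z)\geq 0$.

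The only real obstacle is finding the correct auxiliary element. The naive choice $z'$ with $y\dom z'$ fails, because in fact $z'\dom y$. The fix is to use the $U^*$-element $z'-y$ obtained by reflecting $y-z$ in $y$, rather than trying to place $z'$ in a dominance chain. After that, the argument reduces to the sign computation above together with the identity $(y,z')=(y,z)$, which makes the $y$-terms cancel exactly.
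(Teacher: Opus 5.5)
Your proof is correct, and it takes a genuinely different route from the paper's. The identity
\[
2\bigl((x,z)-(x,y)(y,z)\bigr)=-(x-y,\,y-z)-\bigl(x-y,\,r_y(y-z)\bigr)
\]
is right. All three vectors lie in $U^*$ by Proposition~\ref{domU} and $W$-invariance, and Proposition~\ref{leq0} then finishes the argument.

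The paper uses the same auxiliary root $-r_yz$, but it inserts it into a dominance chain instead of working with the $U^*$-element $r_y(y-z)$. It splits according to whether $(x,-r_yz)\geq 1$.
\begin{itemize}
\item If $(x,-r_yz)\geq 1$, Lemma~\ref{domandform} gives dominance between $x$ and $-r_yz$. Antisymmetry of dominance (with $y\neq z$) shows $-r_yz\dom y$. Lemma~\ref{domineq}, applied to $x\dom -r_yz\dom y$ or to $-r_yz\dom x\dom y$, then gives $(x,z)\geq 2(x,y)(y,z)-(x,y)$ or $(x,z)\geq 2(x,y)(y,z)-(y,z)$.
\item If $(x,-r_yz)<1$, this rearranges to $(x,z)>2(x,y)(y,z)-1$.
\end{itemize}
In each case, $(x,y),(y,z)\geq 1$ is used to pass down to $(x,y)(y,z)$.

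Compared with this, your version:
\begin{itemize}
\item is shorter and free of case analysis;
\item needs only Propositions~\ref{domU} and~\ref{leq0}, so it avoids Lemma~\ref{domineq} and the external result behind it;
\item handles $x=y$ or $y=z$ automatically;
\item gives an exact expression for the defect.
\end{itemize}
The paper's case analysis buys slightly sharper case-by-case bounds, but nothing later uses them.

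Incidentally, your second pairing alone gives
\[
(x,z)\geq (x,y)(y,z)+\bigl((x,y)-1\bigr)\bigl((y,z)-1\bigr),
\]
which together with Lemma~\ref{domandform} already suffices. Averaging with the first pairing is exactly what removes the need for $(x,y),(y,z)\geq 1$.
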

\begin{proof}
Note that the claimed result trivially holds if either $x=y$ or $y=z$. Thus we may assume that $x\neq y\neq z$.
We have the following two possibilities:
\begin{itemize}
  \item[(1)] $(x, -r_y z)\geq 1$, or else
  \item[(2)] $(x, -r_{y}z)<1$.
\end{itemize}
If (1) is the case, then either 
\begin{itemize}
\item[(1.1)] $x\dom -r_y z$, or else
\item[(1.2)] $-r_y z\dom x$.
\end{itemize}
Note that $(y, -r_y z)=(y, z)\geq 1$ and hence either $y\dom -r_y z$ or $-r_y z\dom y$. If $y\dom -r_y z$ then $-y=r_y y\dom r_y (-r_y z)=-z$ which implies that $z\dom y$, a contradiction. Therefore, $-r_y z\dom y$.
Now, if (1.1) is the case, then $x\dom -r_y z \dom y$, and hence it follows from Lemma~\ref{domineq} that 
\begin{align*}
(x, y)&\geq (x, -r_y z)=(x, -z+2(y, z)y)\\
      &=-(x, z)+2(y, z)(x,y),
\end{align*}
and hence $(x, z)\geq 2(x, y)(y, z)-(x, y)\geq (x, y) (y, z)$, as required.

On the other hand, if (1.2) is the case, then 
$$-r_y z\dom x\dom y,$$
and hence it follows from Lemma~\ref{domineq} that  $(-r_y z, y)\geq (-r_y z, x)$. That is, 
\begin{align*}
(y, z)&\geq (-z+2(y, z)y, x)=-(x, z)+2(y, z)(x, y),\\
\noalign{\hbox{and hence}}
(x, z)&\geq 2(x, y)(y, z)-(y, z)\geq (x, y)(y, z),
\end{align*}
as required.

Finally, if (2) is the case, then
$$(x, -r_y z)=(x, -z)+2(y, z)(x, y)<1,$$
that is
$$-(x, z)+2(y, z)(x, y)<1,$$
and thus
$$2(x, y)(y, z)-1 <(x, z),$$
and consequently
$$(x, y)(y, z) \leq (x, y)(y, z)+\underbrace{((x, y)(y, z)-1)}_\text{$\geq 0$}=2(x, y)(y, z)-1<(x, z),$$
as required.
\end{proof}

The next result is an easy observation which may help to simplify arguments later on:

\begin{lemma}
\label{orth}
Suppose that $\eta_1, \eta_2\in U^*$ with $(\eta_1, \eta_2)=0$. Then for each $x\in \Phi$ we have
$$(\eta_1, x)(\eta_2, x)\geq 0.$$
\end{lemma}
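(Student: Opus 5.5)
We argue by contradiction, using only the $W$-invariance of $U^*$ together with Proposition~\ref{leq0}. Suppose that for some $x\in\Phi$ we have $(\eta_1,x)(\eta_2,x)<0$. Replacing $x$ by $-x$ changes neither the hypothesis nor the product, so we may assume
$$(\eta_1,x)>0>(\eta_2,x).$$
The idea is to apply the reflection $r_x$ to one of the two elements. Since $U^*$ is $W$-invariant and $r_x\in W$, we have $r_x\eta_2\in U^*$.

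Next we compute the pairing of $\eta_1$ with this reflected element:
$$(\eta_1, r_x\eta_2)=(\eta_1,\eta_2-2(\eta_2,x)x)=(\eta_1,\eta_2)-2(\eta_2,x)(\eta_1,x).$$
The first term vanishes by hypothesis. The factor $-2(\eta_2,x)$ is strictly positive and $(\eta_1,x)$ is strictly positive, so
$$(\eta_1,r_x\eta_2)>0.$$
But $\eta_1$ and $r_x\eta_2$ both lie in $U^*$, so Proposition~\ref{leq0} gives $(\eta_1,r_x\eta_2)\leq 0$. This contradiction shows $(\eta_1,x)(\eta_2,x)\geq0$.

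The only point needing care is that the reflection formula $r_xv=v-2(v,x)x$ holds for an arbitrary root $x$, not only a simple one. This is because $x=wa$ for some $w\in W$ and $a\in\Pi$, so $r_x=wr_aw^{-1}$, and $(x,x)=1$. With that in place there is no real obstacle: the argument is a single application of Proposition~\ref{leq0}.
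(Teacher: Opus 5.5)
Your proposal is correct and is essentially the paper's proof: the paper reflects $\eta_1$ instead of $\eta_2$ and argues directly, $0\geq (r_x\eta_1,\eta_2)=(\eta_1,\eta_2)-2(\eta_1,x)(\eta_2,x)$, via Proposition~\ref{leq0}. Your contradiction framing and the reduction to $(\eta_1,x)>0>(\eta_2,x)$ are harmless but not needed, since that same identity already gives $(\eta_1,x)(\eta_2,x)\geq 0$ directly.
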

\begin{proof}
Since $U^*$ is $W$-invariant, it follows that $w \eta_1\in U^*$ for all $w\in W$, and in particular, for each $x\in \Phi$ we have
$r_x \eta_1\in U^*$.
Then 
\begin{align*}
0\geq (r_x \eta_1, \eta_2)&=(\eta_1-2(\eta_1, x)x, \eta_2)\\
                          &=\underbrace{(\eta_1, \eta_2)}_\text{$=0$}-2(\eta_1, x)(\eta_2, x).
\end{align*}
Consequently, $(\eta_1, x)(\eta_2, x)\geq 0$.
\end{proof}

\begin{lemma}
\label{uineq}
Suppose that $x$ and $y$ are roots with $x\dom y$. Then for each $\eta\in U^*$ we have
$$(\eta, y)\geq (\eta, x).$$
\end{lemma}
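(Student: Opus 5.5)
By Proposition~\ref{domU}, the hypothesis $x\dom y$ gives $x-y\in U^*$. Since $\eta\in U^*$ as well, Proposition~\ref{leq0} applies to the pair $\eta$ and $x-y$. It yields
$$(\eta, x-y)\leq 0.$$
Expanding by bilinearity gives $(\eta,x)-(\eta,y)\leq 0$, that is, $(\eta,y)\geq(\eta,x)$, which is the claimed inequality.

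The only point to check is the degenerate case $x=y$. There $x-y=0\in U^*$ trivially, and the inequality holds with equality, so no separate argument is needed. Nothing beyond these two earlier results is used.

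This lemma is a reformulation of the pairing $(u,v)\leq 0$ on $U^*$ combined with the characterisation of dominance in Proposition~\ref{domU}.
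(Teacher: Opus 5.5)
Your proof is correct and is essentially the paper's argument: Proposition~\ref{domU} gives $x-y\in U^*$, and Proposition~\ref{leq0} then gives $(\eta,x-y)\leq 0$, which rearranges to the claim. Your separate remark on the case $x=y$ is harmless but not needed, since $0\in U^*$ and the same two steps already cover it.
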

\begin{proof}
Recall that $x\dom y$ if and only if $x-y\in U^*$ (by \cite[Proposition 4.10]{FU2}). Hence for all $\eta\in U^*$
$$(\eta, x-y)\leq 0.$$
Whence $(\eta, y )\geq (\eta, x)$.
\end{proof}

%

\begin{proposition}
\label{aff1}
Let $(x_i)_{i\in \N}\subset \Phi^+$ be an infinite injective sequence of positive roots totally ordered by dominance. Suppose that $\eta_1$ and $\eta_2$ are accumulation points of the normalised sequence $(\widehat{x_i})_{i\in \N}$ where 
$|\pos(\eta_1)|<\infty$. Then $\eta_1=\eta_2$.
\end{proposition}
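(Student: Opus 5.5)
The plan is to reduce to the case $\pos(\eta_1)=\emptyset$ by moving everything with a suitable $w\in W$. In that case $\eta_1$ is forced to span the one-dimensional isotropic radical of an irreducible affine parabolic, and $\eta_2$, which is isotropic with the same support, must then be a multiple of $\eta_1$.

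\emph{Step 1: reduction.} Since $|\pos(\eta_1)|<\infty$, we have $\eta_1\in\mathscr Z$. Take any finite transitional sequence $\mathbf a$ for $\eta_1$ and let $w:=r_{a_{n}}\cdots r_{a_1}$ with $n=|\pos(\eta_1)|$. By Remark~\ref{ts}, $\pos(w\eta_1)=\emptyset$.

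Dominance is $W$-invariant, because by Proposition~\ref{domU} we have $x\dom y\iff x-y\in U^*$ and $U^*$ is $W$-invariant. The inversion set $N(w)$ is finite and the $x_i$ are distinct, so $wx_i\in\Phi^+$ for all large $i$. Hence $(wx_i)_{i\geq N}$ is again an infinite injective dominance chain of positive roots.

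Since $\widehat{x_{i_k}}\to\eta_j$ for suitable subsequences and $w$ is linear with $|w\eta_j|_1>0$, the normalised sequence $(\widehat{wx_i})$ has accumulation points $w\cdot\eta_1$ and $w\cdot\eta_2$. As $w\cdot$ is injective on $\widehat{U^*}$, it suffices to show $w\cdot\eta_1=w\cdot\eta_2$. So we may assume from now on that $\pos(\eta_1)=\emptyset$, that is, $(a,\eta_1)\leq 0$ for all $a\in\Pi$.

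\emph{Step 2: $\eta_1$ lies in a radical.} By Remark~\ref{rmk} (applied along the subsequence converging to $\eta_1$), $(x_i,\eta_1)\geq 0$ for every $i$. Write $x_i=\sum_a \coeff_a(x_i)a$ with nonnegative coefficients. Then $(x_i,\eta_1)$ is a sum of terms that are each $\leq 0$, so every term vanishes. Hence $(a,\eta_1)=0$ for all $a\in\supp(x_i)$.

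By Theorem~\ref{support}, for large $i$ we have $\supp(x_i)=\supp(\eta_1)=\supp(\eta_2)=:I$, and $I$ is connected. Thus $\eta_1\in\PLC(I)$ and $(\eta_1,a)=0$ for all $a\in I$, so $\eta_1\in\PLC(I)\cap\rad(\R I)$. As in the proof of Proposition~\ref{affcomp} (via \cite[Lemma 6.1.1]{DK94}), $W_I$ is irreducible affine. By \cite[Proposition 2.6]{HM}, $Q\cap\R I=\rad(\R I)$, and this space is one-dimensional.

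\emph{Step 3: conclusion.} Both $\eta_1$ and $\eta_2$ lie in $E\subseteq Q$ and are supported in $I$, so both lie in $Q\cap\R I=\R\eta_1$. Since $|\eta_1|_1=|\eta_2|_1=1$, it follows that $\eta_2=\eta_1$. (One could alternatively note $(\eta_1,\eta_2)=0$ from Lemma~\ref{specialcase}, but this is not needed.)

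The main point requiring care is Step 1. One must check that the normalised action transports accumulation points, including positivity of $|w\eta_j|_1$ (which holds because $w\eta_j\in U^*\setminus\{0\}$). One must also check that discarding the finitely many $i$ with $wx_i\in\Phi^-$ keeps the hypotheses of Theorem~\ref{support} and Remark~\ref{rmk} intact.
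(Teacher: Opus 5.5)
Your proof is correct, and it takes a different route from the paper's.

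The paper applies Theorem~\ref{support} to every translate $(wx_i)$ to get that $\supp(w\cdot\eta_1)$ is connected for all $w\in W$. It then invokes \cite[Theorems~5.23 and 5.25]{FRX}. These say that a limit root $\eta_1$ with $|\pos(\eta_1)|<\infty$, all of whose translates have connected support, is the sole limit root of an irreducible affine reflection subgroup. That subgroup can be conjugated to a standard parabolic $W_I$ with $I=\supp(w\cdot\eta_1)$. Equality then follows from $\supp(w\cdot\eta_2)=I$ and $\widehat{E\cap\R I}=\{w\cdot\eta_1\}$.

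You instead build the conjugating element explicitly from a finite transitional sequence (Remark~\ref{ts}), so that $\pos(w\eta_1)=\emptyset$. You combine this with the sign condition $(x_i,\eta_1)\geq 0$ coming from dominance, which is exactly the trick of the case $\pos(\eta)=\emptyset$ in Proposition~\ref{suppcon}. This places $\eta_1$ in $\PLC(I)\cap\rad(\R I)$, and the affine conclusion comes from the same classical fact the paper uses in Proposition~\ref{affcomp}.

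What your route buys: it needs connectedness of only one support rather than of every translate, and it avoids the external structure theory of limit roots with finite $\pos$. In effect it reproves, in this setting, the part of \cite[Theorem~5.25]{FRX} that is needed. The paper's route is shorter on the page and frames the result through the known characterisation of such limit roots as sole limit roots of affine reflection subgroups.
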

\begin{proof}
Choose two subsequences $(y_i)_{i\in \N}$ and $(z_i)_{i\in \N}$ of $(x_i)_{i\in \N}$ such that $\lim_{i\to\infty}\widehat{y_i}=\eta_1$ and 
$\lim_{i\to\infty}\widehat{z_i}=\eta_2$, and for all $i\in \N$ 
$$z_{i+1}\dom y_{i+1}\dom z_i \quad\text{($z_{i+1}\neq y_{i+1}\neq z_i$)}.$$
Then Theorem~\ref{support} yields that there exists some $N\in \N$ such that
\begin{equation}
\label{equalsupp}
\supp(y_i) =\supp(z_i) =\supp(\eta_1), \quad \forall i\geq N, 
\end{equation}
and, in particular, $\supp(\eta_1)$ is connected.

Next, note that for each $w\in W$, there exists some $N_w\in \N$ such that $(wx_i)_{i\geq N_w}$ is an infinite injective sequence of positive roots  totally ordered by dominance, and $w\cdot \eta_1$ and $w\cdot \eta_2$ are accumulation points of $(\widehat{w  x_i})_{i\geq N_w}$. Further
$(w y_i)_{i\in \N}$ and $(w z_i)_{i\in \N}$ are subsequences of $(w x_i)_{i\in \N}$ where 
$$\lim_{i\to \infty}\widehat{w y_i}= w\cdot\eta_1 \quad \text{and}\quad \lim_{i\to \infty}\widehat{w z_i}=w\cdot\eta_2, $$
with $w z_{i+1}\dom w y_{i+1}\dom w z_i$ for all $i\in \N$. Then Theorem~\ref{support} again yields that $\supp(w\cdot\eta_1)$ is connected for each $w\in W$. 
Now the connectedness of $\supp(w\cdot\eta_1)$ (for each $w\in W$) and the fact $|\pos(\eta_1)|<\infty$ together imply that $\{\eta_1\}=E(W')$ for some irreducible affine reflection subgroup $W'$ of $W$ (that is, $\eta_1$ is the sole limit root in the normalised root subsystem for the reflection subgroup $W'$) by \cite[Theorem 5.23]{FRX}. Then \cite[Theorem 5.25]{FRX} yields that there exists $w\in W$ such that 
$$\{w\cdot\eta_1\}=E(W_I),$$
for some irreducible affine standard parabolic subgroup $W_I$ of $W$, where $I=\supp(w\cdot \eta_1)$.
On the other hand, it follows from Theorem~\ref{support} that there exists some $N'\in \N$ such that
$$\supp(w y_i)= \supp(w z_i)=\supp(w\cdot\eta_1), \quad\forall i\geq N',$$
and thus $I=\supp(w\cdot \eta_2) =\supp(\lim_{i\to\infty} w\cdot z_i)$. Now since $W_I$ is irreducible affine, it follows that $\widehat{E\cap \R I} =\{ w\cdot \eta_1\}$, and therefore $w\cdot \eta_2=w\cdot \eta_1$,  whence $\eta_1=\eta_2$.
\end{proof}

\begin{proposition}
\label{forminf}
Suppose that $(x_i)_{i\in \N}\subset \Phi^+$ is an infinite injective sequence of positive roots totally ordered by dominance, that is, 
$$x_{i+1}\dom x_i,\text{ and } x_{i+1}\neq x_i,\quad \forall i\in \N.$$
Furthermore, suppose that for each accumulation point $\eta$ of the normalised sequence $(\widehat{x_i})_{i\in \N}$ we have
$$|\pos(\eta)|=\infty.$$ 
Then 
$$\lim_{i\to \infty} (x_i, x_1)=\infty.$$
\end{proposition}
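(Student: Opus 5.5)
By Lemma~\ref{domineq} the sequence $\bigl((x_i,x_1)\bigr)_{i\in\N}$ is non-decreasing: applying it to $x_{i+1}\dom x_i\dom x_1$ gives $(x_{i+1},x_1)\geq(x_i,x_1)$. Also $(x_i,x_1)\geq 1$ for $i\geq 2$ by Lemma~\ref{domandform}. So it suffices to rule out boundedness. Suppose, for a contradiction, that $(x_i,x_1)\leq M$ for all $i$.

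The first consequence I would extract is that every accumulation point is orthogonal to the whole chain. For $j<i$, Lemma~\ref{domineq} applied to $x_i\dom x_j\dom x_1$ gives $(x_i,x_j)\leq (x_i,x_1)\leq M$. Fix an accumulation point $\eta=\lim_k\widehat{x_{i_k}}$. Since $|x_{i_k}|_1\to\infty$ by Lemma~\ref{normbd}, we get
$$(\eta,x_j)=\lim_{k\to\infty}\frac{(x_{i_k},x_j)}{|x_{i_k}|_1}=0\qquad\text{for every } j\in\N.$$

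Next I would use the hypothesis $|\pos(\eta)|=\infty$ to produce roots that pair unboundedly with the $x_i$. Take any $y\in\pos(\eta)$. Then
$$(x_{i_k},y)=|x_{i_k}|_1\,(\widehat{x_{i_k}},y)\sim |x_{i_k}|_1\,(\eta,y)\longrightarrow\infty .$$
Eventually $(x_{i_k},y)\geq 1$, so $x_{i_k}$ and $y$ are comparable under dominance. Because $D(y)$ is finite (Lemma~\ref{dfin}) and the chain is injective, we cannot have $y\dom x_{i_k}$ for infinitely many $k$. Hence $x_{i_k}\dom y$ for all large $k$.

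The goal is then to find such a $y\in\pos(\eta)$ with $y\dom x_j$ for some fixed $j$. Given that, Proposition~\ref{geq} applied to $x_{i_k}\dom y\dom x_j$ gives
$$(x_{i_k},x_j)\geq (x_{i_k},y)(y,x_j)\geq (x_{i_k},y)\to\infty,$$
which contradicts $(x_{i_k},x_j)\leq M$. We cannot simply take $y=x_j$, because the first step shows $(\eta,x_j)=0$, so $x_j\notin\pos(\eta)$.

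The main obstacle is producing such a $y$, and I would try three things in order.

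1. Reflect a chain element through a root of $\pos(\eta)$. For $z\in\pos(\eta)$ with $(z,x_j)\geq 1$ and $x_{i}\dom z$, set $y=\pm r_zx_j$. Using $(\eta,x_j)=0$, one computes
$$(\eta,r_zx_j)=-2(z,x_j)(\eta,z),$$
so the sign of $y$ can be chosen to lie in $\pm\pos(\eta)$. One then checks $y\dom x_j$ via $(y,x_j)=2(z,x_j)^2-1\geq 1$ together with Lemma~\ref{dfin}, in the same way as the case analysis in Proposition~\ref{geq}.

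2. If that fails, pass to a transitional sequence. Run a transitional sequence for $\eta$ as in Lemma~\ref{eg2} and set $w_n:=r_{a_{n-1}}\cdots r_{a_1}$. Each $w_n$ preserves all the pairings $(w_nx_i,w_nx_j)$, so the bound $M$ persists. Since $\supp(\eta)$ is connected by Theorem~\ref{support} and $\eta\in Q$, Theorem~\ref{zero2} gives $w_n\eta\to0$ with $\mathbf a_\infty=\supp(\eta_{m_{\mathbf a}})$. I would then test this against the elements $x_i-x_1\in U^*$ (Proposition~\ref{domU}), whose normalisations also converge to $\eta$, and against Proposition~\ref{epsilon}, looking for a contradiction with $(w_n\eta,w_nx_j)=0$.

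3. As a last resort, use Lemma~\ref{orth} with $\eta$ and a second accumulation point to control the signs of the pairings $(\cdot,y)$.
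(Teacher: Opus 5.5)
There is a genuine gap, and it sits in the step everything else is set up for. Your preliminary steps are correct: monotonicity of $(x_i,x_1)$ via Lemma~\ref{domineq}, the bound $(x_i,x_j)\le M$, the orthogonality $(\eta,x_j)=0$ for every $j$, and $x_{i_k}\dom y$ eventually for each $y\in\pos(\eta)$.

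\textbf{The reduction target cannot exist.} Under your standing assumptions there is no $y\in\pos(\eta)$ with $y\dom x_j$. Lemma~\ref{uineq} would give $(\eta,x_j)\ge(\eta,y)>0$, contradicting the orthogonality you have just proved. So ``find such a $y$'' does not reduce the problem to something easier; it is already equivalent to the contradiction you want, and the appeal to Proposition~\ref{geq} is redundant. This is also why the check in your first attempt must fail. For $z\in\pos(\eta)$ with $(z,x_j)\ge 1$, Lemma~\ref{uineq} forces $x_j\dom z$. The root $\pm r_zx_j$ you build is then either outside $\pos(\eta)$, or (when $r_zx_j\in\Phi^-$) it lies in $\pos(\eta)$ but is comparable to $x_j$ only in the direction $x_j\dom -r_zx_j$. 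Your second and third attempts name tools but do not identify where a contradiction would come from.

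\textbf{The hypothesis $|\pos(\eta)|=\infty$ is never used.} Everything you derive uses only $\pos(\eta)\neq\emptyset$, and the statement is false under that weaker hypothesis. So no argument built from a single element of $\pos(\eta)$ can succeed. For example, take $\Pi=\{a,b,c\}$ with $(a,b)=-1$, $(a,c)=-\tfrac12$, $(b,c)=0$, put $\delta=a+b\in\mathscr K$, and set $x_i:=r_c(a+i\delta)$. Then:
\begin{itemize}
\item $x_{i+1}\dom x_i$, since the difference $r_c\delta$ lies in $U^*$;
\item $(x_i,x_1)=1$ for all $i$;
\item $\widehat{x_i}\to\widehat{a+b+c}$, whose $\pos$ is $\{c\}$.
\end{itemize}
All your intermediate conclusions hold here with $y=c$, yet nothing in $\pos(\eta)$ dominates any $x_j$.

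\textbf{The missing idea.} The infinitude of $\pos(\eta)$ has to enter through a structural rigidity statement, which is what the paper supplies:
\begin{itemize}
\item The set $\{(x,y)\mid x,y\in\Phi,\ 1\le (x,y)\le M\}$ is finite \cite[Lemma~4.3]{Dyer12}, so a bounded nondecreasing sequence $(x_i,x_1)$ is eventually constant.
\item Proposition~\ref{eq1}(ii), applied to $x_i\dom x_j\dom x_1$ with $(x_i,x_1)=(x_j,x_1)$, forces this constant to be $1$. It also places every later $x_i$ in the plane $\R x_j\oplus\R x_1$, with $\langle r_{x_j},r_{x_1}\rangle$ affine dihedral.
\item The normalised chain therefore converges to the unique normalised isotropic vector of that plane. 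This is an affine dihedral limit root, so it has finite $\pos$ by \cite[Theorem~5.5]{FRX}, contradicting the hypothesis.
\end{itemize}
The example above is exactly this situation.
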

\begin{proof}
Suppose, to the contrary, that the set $\big\{\,(x_i, x_1)\mid i\in \N\,\big\}$ is bounded above.  Note that Proposition~\ref{geq} ensures that 
$$(x_{i+1}, x_1)\geq (x_i, x_1)\geq 1, \quad \forall i\in \N.$$
Thus the sequence of bilinear form values $\big((x_i, x_1)\big)_{i\in\N}$ is a weakly increasing sequence of real numbers, and is bounded above by our assumption. Therefore
$$\lim_{i\to \infty} (x_i, x_1)=M, \quad \exists M\geq 1.$$
Now it follows from \cite[Lemma 4.3]{Dyer12} that the following set of bilinear form values
$$\big\{\,(x, y)\mid \text{ $x,y\in \Phi$ and } 1\leq (x, y)\leq M\,\big\}$$
is finite. Hence passing to a subsequence $(x_{i_k})$ if necessary, we may assume that
\begin{equation*}\label{M}
(x_{i_k}, x_1)=M,\quad \forall i_k\geq 2,  
\end{equation*}
and, in particular, Proposition~\ref{eq1} implies that there exists some $j\geq 2$ with
$$(x_{i_k}, x_1)=(x_j, x_1)=1, \quad \forall i_k\geq j.$$
But then Part (ii) of Proposition~\ref{eq1} yields that
$$\langle r_{x_{i_k}}, r_{x_j}, r_{x_1}\rangle, \quad \forall i_k\geq j$$ 
is affine dihedral and each $x_{i_k}\in V':= \R x_{j}\oplus \R x_1$, for all $i_k\geq j$. Note that in this $2$-dimensional subspace $V'=\R x_{j}\oplus \R x_1$, we have
$$Q\cap V' =\rad(V'),$$
and it is a $1$-dimensional subspace by \cite[Proposition 2.6]{HM}.
Now let $\{\eta\}:= \widehat{Q\cap V'}=Q\cap V'\cap V_1$. Since an accumulation point must be isotropic and is contained in $V_1$ (by \cite[Theorem 2.7]{HLR11}), it follows that the only possible accumulation point in the subspace $V'$ is contained in 
$\widehat{Q\cap V'}=Q\cap V'\cap V_1=\{\eta\}$,  and hence the normalised subsequence 
$(\widehat{x_{i_k}})$ is convergent with  $\eta$ being the limit. 
Since $\eta$ is also the limit root arising from the affine dihedral reflection subgroup $\langle r_{x_1}, r_{x_j}\rangle$, it follows from \cite[Theorem 5.5]{FRX}
that 
$$|\pos(\eta)|<\infty,$$
a contradiction. 
\end{proof}

\begin{lemma}
  \label{tech1}
  Let $\eta\in U^*$ and let $x\in \Phi^+$ with $(\eta, x)\geq 0$. Now let $y\in \pos(\eta)$. Then 
  $(x, y)>-1$. Moreover, if $(\eta, x)=0$, then either $|(x, y)|<1$ or $x\dom y$.
\end{lemma}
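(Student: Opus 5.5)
The plan is to translate the condition $|(x,y)|\geq 1$ into a dominance relation using Lemma~\ref{domandform}. Each resulting dominance relation gives a difference or sum of roots lying in $U^*$, by Proposition~\ref{domU}. That element can then be paired with $\eta$ to contradict Proposition~\ref{leq0}, which says that $(u,v)\leq 0$ for all $u,v\in U^*$.

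For the first assertion, I would suppose for a contradiction that $(x,y)\leq -1$. Then $(x,-y)\geq 1$, so by Lemma~\ref{domandform} there is dominance between the roots $x$ and $-y$. The two directions are handled as follows.
\begin{itemize}
\item If $-y\dom x$, then Proposition~\ref{domU} gives $-y-x\in U^*$. Since $x,y\in\Phi^+$, the vector $x+y$ lies in $\PLC(\Pi)$, so $-y-x$ is a nonzero element of $-\PLC(\Pi)$. This contradicts $U^*\subseteq \PLC(\Pi)\cup\{0\}$.
\item Hence $x\dom -y$, and Proposition~\ref{domU} gives $x+y\in U^*$. Proposition~\ref{leq0} then yields $(\eta,x+y)\leq 0$. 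But $(\eta,x)\geq 0$ and $(\eta,y)>0$ because $y\in\pos(\eta)$, so $(\eta,x+y)>0$, a contradiction.
\end{itemize}
Therefore $(x,y)>-1$.

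For the second assertion, assume $(\eta,x)=0$ and $|(x,y)|\geq 1$. The first part already excludes $(x,y)\leq -1$, so $(x,y)\geq 1$. By Lemma~\ref{domandform}, either $x\dom y$ or $y\dom x$. Note that $x\neq y$, since $(\eta,x)=0$ while $(\eta,y)>0$. Suppose $y\dom x$. Then Proposition~\ref{domU} gives $y-x\in U^*$, and Proposition~\ref{leq0} gives $(\eta,y-x)\leq 0$. However, $(\eta,y-x)=(\eta,y)>0$, a contradiction. Hence $x\dom y$, as claimed.

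I do not expect a genuine obstacle here. The only point needing care is the first case above, where the orientation $-y\dom x$ must be ruled out. This uses the positivity of $U^*$ rather than the pairing inequality of Proposition~\ref{leq0}.
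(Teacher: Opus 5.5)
Your proof is correct, and your treatment of the second assertion is the same as the paper's. For the first assertion you take a different route. The paper argues by hand in the rank-$2$ subsystem. With $W'=\langle r_x,r_y\rangle$, it writes $\Phi(W')$ explicitly: the coefficients are $c_i=\sinh(i\theta)/\sinh\theta$ when $(x,y)<-1$, and $c_i=i$ when $(x,y)=-1$. It then notes that every $z\in\Phi^+(W')\setminus\{x\}$ has the form $c_{i\pm1}x+c_iy$ with $c_i\geq 1$. Hence $(\eta,z)\geq(\eta,y)>0$ for infinitely many positive roots, contradicting Proposition~\ref{epsilon}.

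You instead apply Lemma~\ref{domandform} to the pair $x,-y$ and discard $-y\dom x$ by positivity of $U^*$. You then use Proposition~\ref{domU} to get $x+y\in U^*$, which pairs strictly positively with $\eta$, contradicting Proposition~\ref{leq0}. Your version is shorter and needs no case split between the affine and non-affine dihedral cases. It also runs both halves of the lemma through one mechanism: a dominance relation produces an element of $U^*$, which must pair non-positively with $\eta$.

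The trade-off is that the infinite-dihedral content the paper computes explicitly is now hidden inside the cited results, applied to roots of opposite sign. This concerns in particular the implication $x\dom -y\Rightarrow x+y\in U^*$. Since the paper states Lemma~\ref{domandform} and Proposition~\ref{domU} for arbitrary $x,y\in\Phi$, this use is legitimate. The paper's argument, by contrast, relies only on the elementary orbit computation and the finiteness result Proposition~\ref{epsilon}. It also makes visible why the lemma holds: $(x,y)\leq -1$ would force infinitely many positive roots of $W'$ to pair with $\eta$ at least $(\eta,y)$.
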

\begin{proof}
  Let $W':=\langle r_x, r_y\rangle$. Set $\Phi(W')$ to be the \emph{root subsystem} for $W'$, 
  that is, 
  $$\Phi(W'):= \{x\in \Phi\mid r_x\in W'\}.$$
  Direct dihedral calculations show that if $(x, y)<-1$ then  
  $$\Phi(W')=\big\{ c_{i\pm 1} x+c_{i} y\mid  c_i:=\frac{\sinh i\theta}{\sinh \theta}, i\in \Z,\, \, \theta:=\cosh^{-1}(-(x, y))\big\};$$
  while if $(x, y)=-1$ then
  $$\Phi(W')=\big\{ c_{i\pm 1} x+c_{i} y\mid i\in \Z,\, c_i:=i \big\},$$
  and, in particular, $\Phi(W')$ is an infinite root subsystem.
  Then note that $$0<\epsilon:=(\eta, y)\leq (\eta, z),\quad \forall z\in \Phi^+(W')\setminus\{x\}, $$
  and consequently, the following set is infinite
  $$\{\,z\in \Phi^+(W')\mid (\eta, z)\geq \epsilon \,\},$$
  a contradiction to Proposition~\ref{epsilon}. Whence $(x, y)>-1$.
  
  Next, suppose that $(\eta, x)=0$.  If $(x, y)\geq 1$, then there is dominance between $x$ and $y$. If $y\dom x$ 
  then $y-x\in U^*$, and hence 
  $$0\geq (\eta, y-x)=(\eta, y)-(\eta, x)=(\eta, y)>0,$$
  a contradiction, whence $x\dom y$.
\end{proof}

\begin{proposition}
\label{posinf}
Suppose that $(x_i)_{i\in \N}\subset \Phi^+$ is an infinite injective sequence  with 
$$x_{i+1}\dom x_i,\,\,\forall i\in \N,\text{ and } \lim_{i\to\infty}\widehat{x_i}=\eta.$$
If $|\pos(\eta)|=\infty$, then 
$$(\eta, x_i)>0, \quad \forall i\in \N.$$
\end{proposition}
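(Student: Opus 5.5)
The plan is to argue by contradiction and reduce to a situation where Proposition~\ref{aff1}-type arguments (via \cite{FRX}) force $|\pos(\eta)|<\infty$.

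\textbf{Step 1: reduce to vanishing along a tail.} By Remark~\ref{rmk} we already have $(x_j,\eta)\geq 0$ for every $j$. Suppose $(\eta,x_j)=0$ for some $j$. For every $i\geq j$ we have $x_i\dom x_j$, so Lemma~\ref{uineq} gives $(\eta,x_i)\leq(\eta,x_j)=0$. Hence
$$(\eta,x_i)=0\quad\text{for all } i\geq j.$$
Two consequences follow. First, $r_{x_i}\eta=\eta$ for all $i\geq j$, so $\eta$ is fixed by the reflection subgroup $W'=\langle r_{x_i}\mid i\geq j\rangle$. Second, Lemma~\ref{orth} applies to $\eta$ and $x_{i+1}-x_i\in U^*$, since $(\eta,x_{i+1}-x_i)=0$. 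It gives $(x_{i+1},y)\geq(x_i,y)$ for every $y\in\pos(\eta)$ and $i\geq j$.

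\textbf{Step 2: control $\pos(\eta)$ by dominance.} Fix $y\in\pos(\eta)$. Since $(x_i,y)=|x_i|_1(\widehat{x_i},y)$, with $(\widehat{x_i},y)\to(\eta,y)>0$ and $|x_i|_1\to\infty$ (Lemma~\ref{normbd}), we get $(x_i,y)\to\infty$. Lemma~\ref{tech1}, applied with $x=x_i$ and $(\eta,x_i)=0$, then gives $x_i\dom y$ for all large $i$. Meanwhile, for the fixed root $x_j$, Lemma~\ref{tech1} says that every $y\in\pos(\eta)$ either satisfies $|(x_j,y)|<1$ or lies in $D(x_j)$. Since $D(x_j)$ is finite (Lemma~\ref{dfin}), all but finitely many $y\in\pos(\eta)$ satisfy $|(x_j,y)|<1$. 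By Proposition~\ref{lowbd}, these values lie in a finite set.

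\textbf{Step 3: the main step, showing $\eta$ is affine-type.} I will use the invariance of $\eta$ under $W'$. The approach is to pass to a finitely generated irreducible piece of $W'$ whose limit roots contain $\eta$. The natural candidate is $W'_n=\langle r_{x_j},\dots,r_{x_n}\rangle$ for suitable large $n$:
\begin{itemize}
\item it is irreducible, because consecutive roots satisfy $(x_i,x_{i+1})\geq 1$ by Lemma~\ref{domandform};
\item it contains the roots $x_i$, so a suitable cofinal choice places $\eta$ among its accumulation points;
\item it fixes $\widehat\eta$ under the normalised action.
\end{itemize}
As in the proof of Proposition~\ref{eq1}, a fixed point of the normalised action forces $E(W'_n)=\{\eta\}$ by \cite[Theorem 3.1]{DHR13}. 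The argument with \cite[Proposition 4.12]{FRX} then shows $W'_n$ is affine. Finally, \cite[Theorem 5.5]{FRX}, as used at the end of Proposition~\ref{forminf}, gives $|\pos(\eta)|<\infty$, contradicting the hypothesis.

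The hard part is making $\eta$ an accumulation point of a finitely generated subgroup fixing it. If this fails, I would fall back on a dihedral pair $\langle r_{x_j},r_{x_k}\rangle$, both of whose generators are orthogonal to $\eta$. I would then use the monotonicity from Step 1 and the finite-value constraint from Step 2 to show that $(x_j,x_k)=1$ for large $k$. After that, the argument concludes exactly as in Proposition~\ref{forminf}: the affine dihedral radical pins down $\eta$, and \cite[Theorem 5.5]{FRX} gives finiteness of $\pos(\eta)$.
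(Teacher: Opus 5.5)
Steps 1 and 2 are correct. There is a genuine gap, however. Step 3, where the contradiction must actually be produced, rests on a membership you never establish, and the fallback cannot work.

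\textbf{The main route in Step 3.} The fixed-point argument you borrow from Proposition~\ref{eq1} (Dyer--Hohlweg--Ripoll, Theorem 3.1) needs $\eta$ to lie in the imaginary convex body of $W'_n$, or among its limit roots. That is, $\eta$ must essentially be a nonnegative combination of the canonical simple roots of $W'_n$. But for fixed $n$ the group $W'_n$ contains only the chain roots $x_j,\dots,x_n$, so a "cofinal choice" gives nothing. Being fixed by $W'_n$ does not supply the missing positivity either. This holds even though, for large $n$, $\eta$ does lie in the span of the roots of $W'_n$ and in the radical of that span.

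This is not a technicality. The hypotheses of Proposition~\ref{forminf} hold here: the only accumulation point is $\eta$, and $|\pos(\eta)|=\infty$. Hence $(x_n,x_j)\to\infty$, independently of your contradiction hypothesis. So for large $n$ the group $W'_n$ contains the non-affine infinite dihedral subgroup $\langle r_{x_j},r_{x_n}\rangle$, which has two limit roots. Consequently $E(W'_n)\neq\{\eta\}$ and $W'_n$ is not affine. Deriving $\eta\in\overline{Z}(W'_n)$ from $(\eta,x_j)=0$ would therefore be the entire proof, and nothing in Steps 1--2 bears on it. You could instead use $\langle r_{x_i}\mid i\ge j\rangle$, of which $\eta$ genuinely is a limit root. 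That would close the argument only if this reflection subgroup had finite rank, and nothing guarantees that.

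\textbf{The fallback.} It aims to show $(x_j,x_k)=1$ for large $k$. By Proposition~\ref{forminf} this is outright false, so proving it amounts to proving the proposition itself. The tools you name do not touch this quantity. The monotonicity from Step 1 and the finite-value constraint from Step 2 concern $(x_i,y)$ for $y\in\pos(\eta)$, whereas $x_k\notin\pos(\eta)$.

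\textbf{The missing idea.} What you lack is the paper's asymptotic descent. First, apply an element of $W$ to reduce to $x_1\in\Pi$, so that $D(x_1)=\{x_1\}$. Lemma~\ref{tech1} then gives $|(x_1,y)|<1$ for \emph{all} $y\in\pos(\eta)$, not merely cofinitely many. Next, Proposition~\ref{forminf} and Proposition~\ref{lowbd} give $N$ with $(x_N,x_1)>1/L$.

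Then take a transitional sequence $\mathbf{a}$ for $\eta$ and set $\beta_i=r_{a_{i-1}}\cdots r_{a_1}x_N$. Each $\beta_i$ stays positive, and $(\beta_i,a_i)\ge 0$. Otherwise $r_{x_N}a_{i,1}\in\pos(\eta)$ would satisfy $(r_{x_N}a_{i,1},x_1)>1$. Lemma~\ref{tech1} would then force $x_1\dom r_{x_N}a_{i,1}$, which is impossible for a simple root.

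It follows that $\dep(\beta_i)$ is non-increasing, and eventually $\beta_m\perp\mathbf{a}_\infty$. Theorem~\ref{zero2} applies because supports are connected by Theorem~\ref{support}. It gives $\supp(\beta_m)\subseteq\supp(\eta_m)=\mathbf{a}_\infty$, so $(\beta_m,\beta_m)=0$, a contradiction.
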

\begin{proof}
Note that we only need to prove that 
\begin{equation}\label{P1}
  (\eta, x_1)>0.
\end{equation}
Indeed, the subsequence $(x_i)_{i\in \N_{\geq 2}}$ clearly satisfies all the requirements of $(x_i)_{i\in \N}$, and if (\ref{P1}) holds then by replacing $(x_i)_{i\in \N}$ by $(x_i)_{i\in \N_{\geq 2}}$ we shall get 
$$(\eta, x_2)>0,$$
and so on. Thus if (\ref{P1}) holds then 
$$(\eta, x_i)>0\quad \forall i\in \N.$$
Therefore, it remains to prove (\ref{P1}).
Note also that 
$$|\pos(w\eta)|=\infty, \quad \forall w\in W,$$
and hence we may assume that $x_1\in \Pi$.
Now suppose for a contradiction that $(\eta, x_1)=0$ (recall that $(x_1, \eta)\geq 0$ by Remark~\ref{rmk}). Since $x_i\dom x_1$ for all $i\in \N$, it follows from Lemma~\ref{uineq} that
$$(\eta, x_1)\geq (\eta, x_i), \quad \forall i\in \N.$$
Consequently, the assumption of $(\eta, x_1)=0$ forces  
\begin{equation}\label{always}
  (\eta, x_i)=0,\quad \forall i\in \N.
\end{equation}
On the other hand, Proposition~\ref{forminf} yields that
$$\lim_{i\to\infty}(x_i, x_1)=\infty.$$
Thus, there exists some $N\in \N$ with 
\begin{equation}\label{L}
  (x_N, x_1)>\frac{1}{L},
\end{equation}
where $L:=\min\{\,|(x, y)|\colon \text{ $x, y\in\Phi$ and $(x, y)\neq 0$ } \,\}$.
Recall that $L>0$ by Proposition~\ref{lowbd}. 

Next let $\mathbf{a}=(a_i)_{i\in \N}$ be a transitional sequence for $\eta$ with corresponding limit $\eta_{\mathbf{a}}$. For each $i\in \N$ we set
$$a_{i,1}:=r_{a_1}\cdots r_{a_{i-1}}a_i.$$
Recall that Lemma~\ref{pos} yields that $a_{i, 1}\in \Phi^+$. Next, for each $i\in \N$ we set
$$\beta_i:=r_{a_{i-1}}\cdots r_{a_1} x_N.$$
Recall that we have established in Lemma~\ref{pos} that for all $i\in \N$
$$N(r_{a_{i-1}}\cdots r_{a_1})\subset N(r_{a_i}\cdots r_{a_1}) =\{\,a_{j,1}\mid j\leq i \,\}\subset \pos(\eta).$$
Thus  
\begin{equation}\label{beta}
  \beta_i\in \Phi^+,\quad \forall i\in \N,
\end{equation}
for otherwise 
$$x_N\in N(r_{a_{i-1}}\cdots r_{a_1})\subset \pos(\eta),$$
contradicting $(\eta, x_N)=0$.

Now suppose for a contradiction that there exists some $i\geq 1$ with 
$$(\beta_i, a_i)<0.$$
Then
$$(x_N, a_{i,1})=(x_N, r_{a_1}\cdots r_{a_{i-1}}a_i)=(r_{a_{i-1}}\cdots r_{a_1}x_N, a_i)=(\beta_i, a_i)<0.$$
and in particular, 
$$r_{x_N}a_{i, 1}\in \Phi^+,$$
and 
\begin{equation}\label{-L}
  (x_N, a_{i,1})\leq -L,
\end{equation}
by the definition of $L$.
Combine the fact $a_{i, 1}\in \pos(\eta)$, for all $i\in \N$, together with our assumption of $(x_1, \eta)=0$, we deduce from Lemma~\ref{tech1} that
\begin{equation}\label{>-1}
(x_1, a_{i, 1})>-1,  
\end{equation}
and 
$$|(x_1, a_{i,1})|<1,$$
for otherwise $x_1\dom a_{i,1}$, contradicting that $x_1\in \Pi\subset D_0$.
Moreover, observe that in addition to $r_{x_N}a_{i,1}\in \Phi^+ $ we have $r_{x_N}a_{i,1}\in \pos(\eta)$. Indeed,
\begin{align*}
(\eta, r_{x_N} a_{i, 1}) &=(\eta, a_{i,1}-2(a_{i,1}, x_N) x_N)\\
                         &=(\eta, a_{i, 1}) -2(a_{i,1}, x_N) \underbrace{(\eta, x_N)}_\text{$=0$ by (\ref{always})}\\
                         &=(\eta, a_{i, 1})>0.
\end{align*}
Now 
\begin{align*}
(r_{x_N} a_{i,1}, x_1)&=(a_{i,1}-2(x_N, a_{i,1})x_N, x_1)\\
                      &=\underbrace{(a_{i, 1}, x_1)}_\text{$>-1$ by(\ref{>-1})}-2\underbrace{(x_N, a_{i,1})}_\text{$\leq -L$ by (\ref{-L})}\overbrace{(x_N, x_1)}^\text{$\geq \frac{1}{L}$ by (\ref{L})}\\
                      &>1.
\end{align*}
But now we have $r_{x_N}a_{i,1}\in \pos(\eta)$ and $(x_1,\eta)=0$ and $(r_{x_N} a_{i,1}, x_1)>1$, and therefore Lemma~\ref{tech1} forces
$$x_1\dom r_{x_N} a_{i,1},$$
contradicting $x_1\in \Pi\subset D_0$. Consequently, 
\begin{equation}\label{mono}
(\beta_i, a_i)\geq 0,\quad \forall i\in \N.
\end{equation}
Thus, the sequence of depths  $(\dep(\beta_i))_{i\in \N}$ is weakly decreasing. 
Let $m_{\mathbf{a}}$ be a separating index for $\mathbf{a}=(a_i)_{i\in \N}$. Since there are only finitely many positive roots of depth less than or equal to $\dep(\beta_{m_{\mathbf{a}}})$, and we have established in (\ref{beta}) that $\beta_i\in \Phi^+$ for all $i\in \N$, in view of (\ref{mono}) it follows that there exists some $m\geq m_{\mathbf{a}}$ with 
$$(\beta_m, a_i) =0,\quad \forall i\geq m_{\mathbf{a}}.$$
But on the other hand, note that the sequence $\big((r_{a_{m-1}}\cdots r_{a_1} x_i)\big)_{i\in \N}$ is an infinite injective sequence totally ordered by dominance whose corresponding normalised sequence $\big(\widehat{(r_{a_{m-1}}\cdots r_{a_1} x_i)}\big)_{i\in \N}$ converges to $\widehat{r_{a_{m-1}}\cdots r_{a_1} \eta}$. Now $r_{a_{m-1}}\cdots r_{a_1}$ can only make finitely many positive roots negative, hence starting the sequence $\big((r_{a_{m-1}}\cdots r_{a_1} x_i)\big)_{i\in \N}$ at some $i_0>1$ if needed, we may assume that the following sequence 
$$\big((r_{a_{m-1}}\cdots r_{a_1} x_i)\big)_{i>i_0}$$ 
consists of all positive roots. Since in any dominance chain, the sequence of supports is weakly increasing and eventually stabilises  to the support of any accumulation point, it follows that  
\begin{align*}
\supp(\beta_m) &=\supp(r_{a_{m-1}}\cdots r_{a_1} x_N)
\subseteq \supp(r_{a_{m-1}}\cdots r_{a_1}\eta)\\
&=\underbrace{\supp(\eta_m)=\supp(\eta_{m_{\mathbf{a}}})}_\text{by Lemma~\ref{stn}}\\
&=\mathbf{a}_{\infty}, 
\end{align*}
by Theorem~\ref{zero2}, since $\supp(\eta_{m_{\mathbf{a}}})$ is connected (indeed, note that each $\widehat{\eta_j}=\widehat{r_{a_{j-1}}\cdots r_{a_1}\eta}$ is the limit for the sequence of $\big(\widehat{(r_{a_{j-1}}\cdots r_{a_1} x_i)} \big )_{i\in \N}$, and therefore has connected support following Theorem~\ref{support}, and, in particular, this works when $j=m_{\mathbf{a}}$). 
But then 
$$(\beta_m, \beta_m)=0,$$ contradicting $\beta_m\in \Phi$, completing the proof.
\end{proof}

\begin{theorem}
\label{relation}
Suppose that $(x_i)_{i\in \N}\subset \Phi^+$ and $(y_i)_{i\in \N}\subseteq \Phi^+$ are two infinite injective sequences of positive roots totally ordered by dominance. Moreover, suppose that 
$$x_{i+1}\dom y_{i+1}\dom x_i\dom y_i, \quad \forall i\in \N,$$
and
$$\lim_{i\to\infty} \widehat{x_i}=\eta_1 \text{ and }\lim_{i\to\infty}\widehat{y_i}=\eta_2.$$ 
Then 
\begin{itemize}
\item[(i)] Let $x\in \Phi$. Then $(\eta_1, x)>0$ (respectively, $<0$ and $=0$) if and only if 
$(\eta_2, x)>0$ (respectively, $<0$ and $=0$).
\item[(ii)] Let $\zeta\in U^*$. Then $(\zeta, \eta_1)=0$ (respectively, $<0$) if and only if $(\zeta, \eta_2)=0$ (respectively, $<0$).
\end{itemize}
\end{theorem}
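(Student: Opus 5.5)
The plan is to treat the two sequences together as one dominance chain, obtain orthogonality of $\eta_1$ and $\eta_2$ from that, and then use Proposition~\ref{posinf} to exclude the remaining mixed-sign cases. Throughout we may assume $|\pos(\eta_1)|=|\pos(\eta_2)|=\infty$: if one of them has finite positive set, Proposition~\ref{aff1} gives $\eta_1=\eta_2$, because both are accumulation points of the merged chain $y_1,x_1,y_2,x_2,\ldots$. That merged sequence is an infinite injective chain totally ordered by dominance, and its accumulation points include $\eta_1$ and $\eta_2$. Hence Lemma~\ref{specialcase} gives $(\eta_1,\eta_2)=0$. Lemma~\ref{orth} then gives $(\eta_1,x)(\eta_2,x)\geq 0$ for every $x\in\Phi$, so the strict signs $>0$ and $<0$ can never be opposite.

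For (i) it therefore remains to exclude the case where one pairing is zero and the other is not. Replacing $x$ by $-x$ if necessary, we may suppose $x\in\Phi^+$. Since the hypotheses are symmetric up to shifting indices, we may take $(\eta_1,x)=0$ and $(\eta_2,x)=c>0$.
\begin{itemize}
\item From $\widehat{y_i}\to\eta_2$ we get $(y_i,x)\geq \tfrac c2|y_i|_1$ for large $i$. By Lemma~\ref{normbd}, $|y_i|_1\to\infty$, so $(y_i,x)\geq 1$ eventually.
\item Lemma~\ref{domandform} then gives dominance between $y_i$ and $x$. Since $D(x)$ is finite (Lemma~\ref{dfin}) and the $y_i$ are distinct, in fact $y_i\dom x$ for all large $i$.
\item Hence $x_i\dom y_i\dom x$, so $x_i\dom x$, and Lemma~\ref{uineq} applied with $\eta_1\in U^*$ yields $(\eta_1,x_i)\leq(\eta_1,x)=0$.
\item The tail of $(x_i)$ is a dominance chain converging to $\eta_1$ with $|\pos(\eta_1)|=\infty$, so Proposition~\ref{posinf} gives $(\eta_1,x_i)>0$. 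This is a contradiction.
\end{itemize}
The same argument with the roles of the sequences exchanged (shifting indices so that $y_{i+1}\dom x_i$) handles the case $(\eta_2,x)=0<(\eta_1,x)$. Together with the sign compatibility above, this proves (i).

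For (ii), let $\zeta\in U^*$. By Proposition~\ref{leq0}, $(\zeta,\eta_j)\leq 0$, so it suffices to show $(\zeta,\eta_1)=0\iff(\zeta,\eta_2)=0$. Suppose $(\zeta,\eta_1)=0$. Lemma~\ref{orth} applied to $\zeta$ and $\eta_1$ gives $(\zeta,z)\geq 0$ for every $z\in\pos(\eta_1)$. Next, Proposition~\ref{posinf} gives $(\eta_1,x_i)>0$, and since $x_i\dom y_i$, Lemma~\ref{uineq} gives $(\eta_1,y_i)\geq(\eta_1,x_i)>0$. Thus $y_i\in\pos(\eta_1)$, so $(\zeta,y_i)\geq0$ for all $i$. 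Dividing by $|y_i|_1$ and passing to the limit gives $(\zeta,\eta_2)\geq 0$, and hence $(\zeta,\eta_2)=0$. The converse is symmetric: use $(\eta_2,x_i)\geq(\eta_2,y_{i+1})>0$, which follows from Proposition~\ref{posinf} for the $y$-chain and $y_{i+1}\dom x_i$.

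The step I expect to be the main obstacle is the mixed zero/positive case of (i). The natural first attempt is to compare $(x_i,x)$ with $(y_i,x)$ via Proposition~\ref{geq}, but that requires controlling the ratio $|x_i|_1/|y_i|_1$, which we have no handle on. The route above avoids this entirely: it converts the growth of $(y_i,x)$ into the dominance $y_i\dom x$, and then lets Proposition~\ref{posinf} supply the contradiction. The points to check carefully are the passage from $(y_i,x)\geq1$ to the correct direction $y_i\dom x$, and the applicability of Proposition~\ref{posinf} to tails of the individual chains.
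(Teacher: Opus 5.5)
Most of your argument is sound and close to the paper's. Your bullet-point mechanism is exactly how the paper proves $\pos(\eta_1)=\pos(\eta_2)$: growth of $(y_i,x)$ forces $y_i\dom x$ via Lemma~\ref{dfin}, and then Lemma~\ref{uineq} and Proposition~\ref{posinf} finish. Your part (ii) is also the paper's argument. You apply Lemma~\ref{uineq} to $\eta_1$ rather than to $\zeta$, and you write out the converse via $y_{i+1}\dom x_i$, which the paper leaves to symmetry.

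The real difference is how signs are controlled. The paper handles $(\eta_1,x)<0$ by passing to $r_x\cdot\eta_1$ and rerunning the positive-set argument on the reflected chains $(r_xx_i)$, $(r_xy_i)$. You instead derive $(\eta_1,\eta_2)=0$ from the merged chain and use Lemma~\ref{orth} to exclude opposite strict signs in one stroke.

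However, your case reduction has a hole. You cannot impose both ``$x\in\Phi^+$'' and ``the nonzero pairing is positive''. The substitution $x\mapsto -x$ is used up by the first normalisation, and exchanging the two chains only swaps $\eta_1$ and $\eta_2$; it does not flip signs. So the case $x\in\Phi^+$ with $(\eta_1,x)=0>(\eta_2,x)$, and its mirror, is never treated. Lemma~\ref{orth} says nothing there because the product vanishes. Yet this is exactly the case you need to upgrade $(\eta_1,x)<0$ to $(\eta_2,x)<0$. Your bullets do not apply verbatim, since they use $x\in\Phi^+$ through the finiteness of $D(x)$.

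The repair is short. Put $z:=-x\in\Phi^-$, so $(\eta_1,z)=0<(\eta_2,z)$ and $(y_i,z)\geq 1$ for large $i$. By Lemma~\ref{domandform} there is dominance between $y_i$ and $z$. Moreover $z\dom y_i$ is impossible: take $w=1$ in the definition, where $z\in\Phi^-$ but $y_i\in\Phi^+$. Hence $y_i\dom z$, so $x_i\dom z$, and $0=(\eta_1,z)\geq(\eta_1,x_i)>0$, a contradiction. Alternatively, you could use the paper's reflection by $r_x$.

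With this added, your proof is complete. Compared with the paper, it avoids re-verifying the hypotheses of Proposition~\ref{posinf} for the reflected chains, and it gives the orthogonality $(\eta_1,\eta_2)=0$ as a by-product. The cost is that you must handle the zero-versus-nonzero case for roots of both signs.
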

\begin{proof}
Note that in view of Proposition~\ref{aff1} we may safely assume that $|\pos(\eta_1)|=\infty=|\pos(\eta_2)|$.

(i): If $x\in \pos(\eta_1)$, then Lemma~\ref{normbd} implies that
$$(x, x_i)=(x, \widehat{x_i})|x_i|\to \infty, \text{ as $i\to\infty$}.$$ Thus  
$(x, x_i)\geq 1$ for infinitely many $i$'s, that is, there is dominance between $x$ and infinitely many $x_i$'s. Then Lemma~\ref{dfin} yields that
$x$ can only dominate finitely many $x_i$'s. Consequently, $x$ has to be dominated by some $x_i$ once $i$ is sufficiently large, that is, $x\in D(x_i)$ for some $i\in \N$. Now because $y_{i+1}\dom x_i$, we also have $x\in D(y_{i+1})$. Whence Lemma~\ref{uineq} yields that 
$$(\eta_2, x)\geq \underbrace{(\eta_2, y_{i+1})>0}_\text{by Proposition \ref{posinf}},$$
and hence $\pos(\eta_1)\subseteq \pos(\eta_2)$.

By symmetry, we also have $\pos(\eta_2)\subseteq \pos(\eta_1)$, whence 
$$\pos(\eta_1)=\pos(\eta_2).$$

Next, suppose that $x\in \Phi^+$ and $(x, \eta_1)<0$. Then $(r_x\cdot \eta_1, x)>0$, and the preceding proof establishes that 
$(r_x\cdot \eta_2, x)>0$, whence $(x, \eta_2)<0$. By symmetry, we have for all $x\in \Phi^+$
$$(\eta_1, x)<0 \text{ if and only if } (\eta_2, x)<0.$$

Upon combining the above we have that for all $x\in \Phi^+$
$$(\eta_1, x)=0 \text{ if and only if }(\eta_2, x)=0.$$

Combining all of the above,  we  have 
$(\eta_1, x)>0$ (respectively, $<0$ and $=0$) if and only if 
$(\eta_2, x)>0$ (respectively, $<0$ and $=0$), for each $x\in \Phi$.

(ii): Suppose that $\zeta\in U^*$ and $(\zeta, \eta_1)=0$. Then Lemma~\ref{orth} yields that for all $i\in\N$
$$(\zeta, x_i)(\eta_1, x_i)\geq 0.$$
But we know from Proposition~\ref{posinf} that $(\eta_1, x_i)>0$ for all $i\in \N$, and therefore
$$(\zeta, x_i)\geq 0, \quad \forall i\in \N.$$
Now given the fact that $x_i\dom y_i$ for each $i\in \N$, Lemma~\ref{uineq} yields that
$$(\zeta, y_i)\geq (\zeta, x_i)\geq 0, \quad \forall i\in \N.$$
Note that this means $(\zeta, \eta_2)\geq 0$, but given that $\zeta$ and $\eta_2$ are both in $U^*$, we deduce that
$$(\zeta, \eta_2)=0.$$

From the above we also conclude that for $\zeta\in U^*$ 
$$(\zeta, \eta_1)<0 \text{ if and only if } (\zeta, \eta_2)<0.$$

\end{proof}




\begin{definition}
\label{extreme}
Let $\Ext$ denote the set of extreme points of $\overline Z$. Thus
$\eta\in\Ext$ if, whenever
\[
\eta=t\eta_1+(1-t)\eta_2
\]
with $\eta_1,\eta_2\in\overline Z$ and $0<t<1$, one has
$\eta_1=\eta_2=\eta$.
\end{definition}

\begin{lemma}
\label{norm}
Let $\eta\in\widehat{U^*}=U^*\cap V_1$ and $x\in \Phi$. Then
$$(x, \eta)|x|_1 <\frac{1}{2}.$$ 
\end{lemma}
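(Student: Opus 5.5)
The plan is to use the defining property of $U^*$ exactly as in the proof of Proposition~\ref{epsilon}. Since $\eta\in U^*$ and $U^*$ is $W$-invariant, we have $r_x\eta\in U^*\subseteq \PLC(\Pi)\cup\{0\}$. Because $|\cdot|_1$ is a linear functional, it follows that
$$0\leq |r_x\eta|_1=|\eta|_1-2(\eta,x)|x|_1=1-2(\eta,x)|x|_1.$$
This already gives the non-strict bound $(x,\eta)|x|_1\leq \frac12$. Note that $r_x=r_{-x}$ and $(\eta,-x)|-x|_1=(\eta,x)|x|_1$, so the quantity is unchanged under $x\mapsto -x$. We may therefore assume $x\in\Phi^+$.

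The remaining task, which I expect to be the main obstacle, is to exclude equality. Equality means $r_x\eta=0$, and since $r_x$ is invertible this forces $\eta=0$. That contradicts $|\eta|_1=1$. So the strict inequality follows at once.

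I would also handle the degenerate case $(\eta,x)\leq 0$ separately: there the left-hand side is non-positive (as $|x|_1>0$ for $x\in\Phi^+$), and the bound is trivial. Thus the whole proof reduces to one application of $W$-invariance of $U^*$, the linearity of $|\cdot|_1$, and the observation that $r_x$ is injective.
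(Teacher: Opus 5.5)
Your proof is correct and is essentially the paper's argument: the paper notes directly that $r_x\eta\in U^*\setminus\{0\}\subseteq\PLC(\Pi)$ (as $r_x$ is invertible and $\eta\neq 0$), so $0<|r_x\eta|_1=1-2(x,\eta)|x|_1$. This is exactly your equality-exclusion step, which implicitly uses $|v|_1>0$ on $\PLC(\Pi)$, folded into one line; your reduction to $x\in\Phi^+$ and the separate treatment of $(\eta,x)\leq 0$ are harmless but unnecessary, since the computation holds verbatim for every $x\in\Phi$.
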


\begin{proof}
Since $U^*$ is $W$-invariant,
\[
r_x\eta\in U^*\setminus\{0\}\subseteq\PLC(\Pi).
\]
Therefore
\[
0<|r_x\eta|_1
  =|\eta|_1-2(x,\eta)|x|_1
  =1-2(x,\eta)|x|_1,
\]
and hence
\[
(x,\eta)|x|_1<\frac12.
\]
\end{proof}


Note that Lemma~\ref{norm} implies that for $\eta\in \overline{Z}$, and any sequence $(x_i)_{i\in \N}\subset\Phi^+$, the sequence $(\eta, x_i)|x_i|_1$ is bounded above and its limsup is at most 1/2.

\begin{proposition}
\label{limsup}
Suppose that $(x_i)_{i\in \N}\subset\Phi^+$ is an infinite injective sequence of positive roots totally ordered by dominance 
with $$\lim_{i\to\infty}\widehat{x_i}=\eta.$$ Moreover suppose that $\eta'\in \Ext$ with 
$$\limsup_{i\to\infty} (\eta', x_i)|x_i|_1>0.$$
Then $\eta'=\eta$.
\end{proposition}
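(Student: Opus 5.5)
The plan is to write $\eta'$ as a convex combination of two points of $\overline Z$, one of which tends to $\eta$, and then use that $\eta'$ is an extreme point. The dominance hypothesis will play essentially no role; we only need that $\widehat{x_i}\to\eta$ and that $\eta\in E\subseteq\overline Z$.

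\emph{Step 1: the exact identity.} For each $i$ set $c_i:=(\eta',x_i)|x_i|_1$. Since $\eta'\in\overline Z\subseteq\widehat{U^*}$, Lemma~\ref{norm} gives $c_i<\tfrac12$. Consider the reflected point
$$r_{x_i}\eta'=\eta'-2(\eta',x_i)x_i.$$
It lies in $U^*\setminus\{0\}$ and satisfies $|r_{x_i}\eta'|_1=1-2c_i>0$. Rearranging, and using $x_i=|x_i|_1\widehat{x_i}$, we obtain
$$\eta'=(1-2c_i)\,\widehat{r_{x_i}\eta'}+2c_i\,\widehat{x_i}.$$
Here $\widehat{r_{x_i}\eta'}=r_{x_i}\cdot\eta'$ belongs to $\overline Z$, because $\overline Z$ is invariant under the normalised $W$-action.

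\emph{Step 2: passing to a limit.} By hypothesis $\limsup c_i>0$, and by Step 1 each $c_i<\tfrac12$. So we can choose a subsequence along which $c_i\to c$ for some $c\in(0,\tfrac12]$. The set $\overline Z$ is compact, so after refining the subsequence we may also assume $r_{x_i}\cdot\eta'\to\zeta$ for some $\zeta\in\overline Z$. Meanwhile $\widehat{x_i}\to\eta$ along the full sequence. Taking limits in the identity of Step 1 yields
$$\eta'=(1-2c)\,\zeta+2c\,\eta.$$

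\emph{Step 3: extremality.} Recall $\eta\in E\subseteq\overline Z$.
\begin{itemize}
\item[(a)] If $2c=1$, the identity of Step 2 reads $\eta'=\eta$ directly.
\item[(b)] If $0<2c<1$, then $\eta'$ is a proper convex combination of $\zeta,\eta\in\overline Z$. Since $\eta'\in\Ext$, Definition~\ref{extreme} forces $\zeta=\eta=\eta'$.
\end{itemize}
In either case $\eta'=\eta$.

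The main point requiring care is keeping the limiting weight $c$ strictly positive. That is exactly what the $\limsup$ hypothesis provides: if $c$ were $0$, the identity would degenerate to $\eta'=\zeta$ and extremality would say nothing about $\eta$. The other potential issue is that the normalisation of $r_{x_i}\eta'$ must be well defined, which is guaranteed by the strict inequality in Lemma~\ref{norm}.
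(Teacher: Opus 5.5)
Your proof is correct and follows the paper's argument. Both use the reflection identity $r_{x_i}\cdot\eta'=\lambda_i\eta'+\mu_i\widehat{x_i}$; you rearrange it to write $\eta'$ as a convex combination of $r_{x_i}\cdot\eta'$ and $\widehat{x_i}$, then pass to the limit and invoke extremality, and your treatment of $c=\tfrac12$ (where $(1-2c_i)\,r_{x_i}\cdot\eta'\to 0$ by boundedness) is a tidier form of the paper's unboundedness argument, while for $c<\tfrac12$ your $\zeta$ is exactly the paper's $\eta''$.
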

\begin{proof}
Suppose, for a contradiction, that $\eta\neq \eta'$. 
Note that by passing to a subsequence of $(x_i)_{i\in \N}$ we may assume that 
$$\lim_{i\to \infty} (\eta', x_i)|x_i|_1 =M >0.$$
From Lemma~\ref{norm} we may deduce that $M\leq \frac{1}{2}$.

If $M=\frac{1}{2}$, then passing to a further subsequence of $(x_i)$ if necessary, we may assume that 
there exists some $N\in \N_{>1}$  such that 
$$(\eta', x_i)|x_i|_1 \geq \frac{1}{2}-\frac{1}{2^i}, \quad \forall i\geq N.$$
Then for all $i\geq N$
\begin{equation}\label{convPI}
r_{x_i}\cdot \eta' =\lambda_i\eta'+ \mu_i\widehat{x_i}\in \widehat{U^*}\subset \conv(\Pi),  
\end{equation}
where $\lambda_i =\frac{1}{1-2(\eta', x_i)|x_i|_1} \geq 2^{i-1}$ and $\mu_i=\frac{-2(\eta', x_i)|x_i|_1}{1-2(\eta', x_i)|x_i|_1}\leq 1-2^{i-1}$ with $\lambda_i+\mu_i=1$.
Recall that $r_{x_i}\cdot \eta'$ lies on the line passing through $\eta'$ and $\widehat{x_i}$, and since $\mu_i$ is negative, it follows that $r_{x_i}\cdot\eta'$ lies on that portion of the line where $\eta'$ is strictly in between $r_{x_i}\cdot\eta'$ and $\widehat{x_i}$. Observe that as  the value of $\lambda_i$ increases, the distance between $r_{x_i}\cdot\eta'$ and $\eta'$ increases (unless $\eta=\eta'$).  Now given that  $\lambda_i$ is unbounded when $i$ is unbounded, it is clear that the sequence $(r_{x_i}\cdot\eta')_{i\in \N}$ is not contained in a bounded region of $\conv(\Pi)$,   
contradicting $\conv(\Pi)$ being bounded. 

Consequently, $M<\frac{1}{2}$. Then 
$$\eta'':=\lim_{i\to\infty} r_{x_i}\cdot \eta'=\underbrace{\frac{1}{1-2M}}_\text{$>0$}\eta'+\underbrace{\frac{-2M}{1-2M}}_\text{$<0$}\eta.$$
In particular, $\eta''\in\overline{Z}=\conv(E)$, since $\overline{Z}$ is compact. Note that $\eta''$ lies on the line passing through $\eta$ and $\eta'$. Now since the coefficient of $\eta$ in $\eta''$ is strictly negative while the coefficient of $\eta'$ in $\eta''$ is strictly positive, it follows that $\eta'$ lies strictly between $\eta\in E\subset \overline{Z}$ and $\eta''\in \overline{Z}$, contradicting $\eta'\in \Ext$, unless $\eta=\eta'$.  
\end{proof}

\begin{remark}
It can be verified with direct rank-$2$ calculations that if $W$ is an infinite non-affine dihedral group, and $(x_i)_{i\in \N}$  is an infinite injective sequence of positive roots of $W$ totally ordered by dominance, then $(\widehat{x_i})_{i\in \N}$ converges. 
In fact, if $\eta =\lim_{i\to \infty} \widehat{x_i}$, then 
$$\lim_{i\to \infty}(\eta, x_i)|x_i|_1 =\frac{1}{2}.$$ 
\end{remark}

\begin{corollary}
\label{convexCom}
Suppose that $(x_i)_{i\in \N}\subset\Phi^+$ is an infinite injective sequence of positive roots totally ordered by dominance 
with $$\lim_{i\to\infty}\widehat{x_i}=\eta.$$ Moreover suppose that there exists some $\eta'\in \overline{Z}$ with 
$$\limsup_{i\to\infty} (\eta', x_i)|x_i|_1>0.$$ 
Then $\eta\in \Ext$ and 
$$\limsup_{i\to\infty} (\eta, x_i)|x_i|_1>0.$$
\end{corollary}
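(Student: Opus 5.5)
The plan is to reduce to Proposition~\ref{limsup} by decomposing $\eta'$ into extreme points of $\overline Z$. Since $\overline Z=\conv(E)$ is a compact convex subset of the finite-dimensional affine space $V_1$, Carathéodory's theorem combined with Krein--Milman (or Minkowski's theorem) lets us write
$$\eta'=\sum_{k=1}^{n} t_k\,\zeta_k,\qquad \zeta_k\in\Ext,\quad t_k>0,\quad \sum_k t_k=1,$$
with $n\le\dim V_1+1$. By bilinearity,
$$(\eta',x_i)|x_i|_1=\sum_{k=1}^n t_k\,(\zeta_k,x_i)|x_i|_1 .$$

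The next step is to pass to a subsequence along which the left-hand side converges to $\limsup_{i}(\eta',x_i)|x_i|_1>0$. The key point is that the terms on the right are bounded above: by Lemma~\ref{norm}, each $(\zeta_k,x_i)|x_i|_1<\tfrac12$. They are not obviously bounded below, but we do not need that. Since the finite sum has a positive $\limsup$ and each summand is bounded above, at least one index $k$ must satisfy $\limsup_i(\zeta_k,x_i)|x_i|_1>0$. Indeed, if every summand had $\limsup\le 0$, then the $\limsup$ of the sum would be at most the sum of the $\limsup$s, which is $\le 0$. That subadditivity is valid here because every $\limsup$ is $<+\infty$, so no $\infty-\infty$ issue arises.

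For that index $k$, Proposition~\ref{limsup} applies to $\zeta_k\in\Ext$ and yields $\zeta_k=\eta$. This gives $\eta\in\Ext$ and
$$\limsup_{i\to\infty}(\eta,x_i)|x_i|_1=\limsup_{i\to\infty}(\zeta_k,x_i)|x_i|_1>0,$$
which is the full statement.

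The only delicate step is the $\limsup$ subadditivity. It needs the uniform upper bound from Lemma~\ref{norm}, which holds because each $\zeta_k\in\overline Z\subseteq\widehat{U^*}$. Negative terms tending to $-\infty$ cannot cause trouble, since they only make the inequality easier. The existence of the finite extreme-point decomposition is standard finite-dimensional convexity, as $\overline Z$ is compact and convex.
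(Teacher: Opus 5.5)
Your proposal is correct and follows the paper's proof: you write $\eta'$ as a finite positive convex combination of points of $\Ext$, observe that one of them must have positive $\limsup$, and apply Proposition~\ref{limsup} to identify it with $\eta$, which also absorbs the paper's separate case $\eta'\in\Ext$. You spell out the $\limsup$ step, which the paper leaves implicit; the bound from Lemma~\ref{norm} is not actually needed there, because if every summand had $\limsup\le 0$ then none would be $+\infty$, and subadditivity would apply directly.
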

\begin{proof}
Note that if $\eta'\in \Ext$ then the desired result follows from Proposition~\ref{limsup}. Hence we may assume that $\eta'\notin\Ext$, and 
$$\eta'=\sum_{j=1}^m k_j \eta_j,$$
where $\eta_1, \ldots, \eta_m\in \Ext$ and $k_1, \ldots, k_m>0$ with $k_1+\cdots k_m=1$. Then the assumption implies that there exists some $j\in \{1, \ldots, m\}$ with
$$\limsup_{i\to\infty} (\eta_j, x_i)|x_i|_1>0.$$
Then Proposition~\ref{limsup} ensures that $\eta=\eta_j$ and we are done.
\end{proof}

\begin{corollary}
\label{limFormNorm}
Suppose that $(x_i)_{i\in \N}\subset\Phi^+$ is an infinite injective sequence of positive roots totally ordered by dominance 
with $$\lim_{i\to\infty}\widehat{x_i}=\eta \text{ and } |\pos(\eta)|=\infty.$$
Moreover suppose that 
$$\lim_{i\to\infty} (\eta, x_i)|x_i|_1 =0.$$
Then 
$$\lim_{i\to\infty} (\eta', x_i)|x_i|_1 =0,$$
for all $\eta'\in \overline{Z}$ with $(\eta, \eta')=0$.
\end{corollary}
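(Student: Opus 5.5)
The plan is to bound the quantity $(\eta', x_i)|x_i|_1$ from above and from below separately, using two results already established.

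\emph{Upper bound.} Let $\eta'\in\overline Z$ with $(\eta,\eta')=0$, and suppose, for a contradiction, that
$$\limsup_{i\to\infty}(\eta',x_i)|x_i|_1>0.$$
All the hypotheses of Corollary~\ref{convexCom} then hold: $(x_i)$ is an infinite injective dominance chain, $\widehat{x_i}\to\eta$, and $\eta'\in\overline Z$ has positive $\limsup$. That corollary gives $\limsup_{i\to\infty}(\eta,x_i)|x_i|_1>0$, which contradicts the hypothesis $\lim_{i\to\infty}(\eta,x_i)|x_i|_1=0$. Hence
$$\limsup_{i\to\infty}(\eta',x_i)|x_i|_1\le 0.$$

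\emph{Lower bound.} Both $\eta$ and $\eta'$ lie in $U^*$, since $\overline Z\subseteq\widehat{U^*}$ and $\eta\in E\subseteq\overline Z$. Because $(\eta,\eta')=0$, Lemma~\ref{orth} gives
$$(\eta,x_i)(\eta',x_i)\ge 0\quad\text{for every } i.$$
Since $|\pos(\eta)|=\infty$, Proposition~\ref{posinf} gives $(\eta,x_i)>0$ for all $i\in\N$. Dividing, we get $(\eta',x_i)\ge 0$. As $|x_i|_1>0$, it follows that $(\eta',x_i)|x_i|_1\ge 0$ for all $i$, so $\liminf_{i\to\infty}(\eta',x_i)|x_i|_1\ge 0$.

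Combining the two bounds, the limit exists and equals $0$.

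The only step needing care is the sign argument in the lower bound. It is exactly where the hypothesis $|\pos(\eta)|=\infty$ enters, through Proposition~\ref{posinf}. Without strict positivity of $(\eta,x_i)$, Lemma~\ref{orth} alone would not force $(\eta',x_i)\ge 0$. The upper bound is a direct application of Corollary~\ref{convexCom}.
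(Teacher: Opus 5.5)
Your proof is correct and follows the paper's own argument: you get $(\eta',x_i)\ge 0$ from Proposition~\ref{posinf} together with Lemma~\ref{orth}, and you rule out a positive $\limsup$ by contradiction via Corollary~\ref{convexCom}. Your extra remark that $\eta\in E\subseteq\overline Z\subseteq U^*$ supplies a membership the paper uses implicitly when applying Lemma~\ref{orth}.
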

\begin{proof}
Since $|\pos(\eta)|=\infty$, it follows from Proposition~\ref{posinf} that 
$$(\eta, x_i)>0, \quad \forall i\in \N.$$
Also, because $(\eta, \eta')=0$ and $\eta'\in \overline{Z}\subset U^*$, it follows from Lemma~\ref{orth} that
\begin{equation}\label{infmany}
  (\eta', x_i)\geq 0, \quad \forall i\in \N.
\end{equation} 
Now suppose that
$$\limsup_{i\to\infty}(\eta', x_i)|x_i|_1 >0.$$ 
Then Corollary~\ref{convexCom} yields that 
$$\limsup_{i\to\infty}(\eta, x_i)|x_i|_1 >0,$$ 
contradicting the assumption
$$\lim_{i\to\infty}(\eta, x_i)|x_i|_1=0.$$
Hence 
$$\limsup_{i\to\infty}(\eta', x_i)|x_i|_1 \leq 0, $$
and in view of (\ref{infmany}) we have 
$$\lim_{i\to\infty}(\eta', x_i)|x_i|_1 = 0.$$
\end{proof}

\begin{proposition}
\label{limsupEqual}
Suppose that $(x_i)_{i\in \N}\subset \Phi^+$ and $(y_i)_{i\in \N}\subseteq \Phi^+$ are two infinite injective sequences of positive roots totally ordered by dominance. Moreover, suppose that 
$$x_{i+1}\dom y_{i+1}\dom x_i\dom y_i, \quad \forall i\in \N,$$
and 
$$\lim_{i\to\infty} \widehat{x_i}=\eta_1 \text{ and }\lim_{i\to\infty}\widehat{y_i}=\eta_2.$$ 
Furthermore, suppose that 
$$\limsup_{i\to\infty}(\eta_2, x_i)|x_i|_1 >0 \text{ and } \limsup_{i\to\infty}(\eta_1, y_i)|y_i|_1 >0.$$
Then
$$\eta_1=\eta_2.$$
\end{proposition}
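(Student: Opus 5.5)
The plan is to reduce to Proposition~\ref{limsup}, via Corollary~\ref{convexCom}, applied to each of the two chains separately. We may assume $|\pos(\eta_1)|=|\pos(\eta_2)|=\infty$, since otherwise Proposition~\ref{aff1} already gives $\eta_1=\eta_2$. This uses that $\eta_1,\eta_2$ are both accumulation points of the single merged dominance chain $\cdots\dom x_{i+1}\dom y_{i+1}\dom x_i\dom y_i\dom\cdots$. Both points lie in $E\subseteq\overline Z$.

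First apply Corollary~\ref{convexCom} to the chain $(x_i)$, whose normalisations converge to $\eta_1$, with $\eta':=\eta_2\in\overline Z$. The hypothesis $\limsup_{i\to\infty}(\eta_2,x_i)|x_i|_1>0$ is exactly what is needed. The conclusion is that $\eta_1\in\Ext$ and $\limsup_{i\to\infty}(\eta_1,x_i)|x_i|_1>0$. Symmetrically, applying the corollary to $(y_i)$ with $\eta':=\eta_1$ gives $\eta_2\in\Ext$.

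Now, since $\eta_2\in\Ext$ and $\limsup_{i\to\infty}(\eta_2,x_i)|x_i|_1>0$, Proposition~\ref{limsup} applied to the chain $(x_i)$ with $\eta'=\eta_2$ yields $\eta_2=\eta_1$ directly. In fact, once $\eta_2\in\Ext$ is known, only the first hypothesis is needed. The second hypothesis is used solely to certify that $\eta_2$ is extreme, through the corollary applied to $(y_i)$.

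The only delicate step is the appeal to Corollary~\ref{convexCom} for a non-extreme $\eta'$. There one writes $\eta'$ as a finite convex combination of extreme points of the compact convex set $\overline Z$, using Carathéodory and Krein--Milman in finite dimension. The positive $\limsup$ then passes to some extreme summand, because the $\limsup$ of a finite positive combination is at most the combination of the $\limsup$s. This step is already handled in the paper, so I expect no further obstacle. Alternatively, one can bypass the corollary: since $\eta_2$ is itself a limit of the interlaced chain, one can argue directly with Proposition~\ref{limsup}, once extremality of $\eta_2$ is obtained as above.
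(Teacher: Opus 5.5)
Your argument is correct and is the paper's proof with the roles of the two chains swapped. The paper applies Corollary~\ref{convexCom} to $(x_i)$ with $\eta'=\eta_2$ to get $\eta_1\in\Ext$, then Proposition~\ref{limsup} to $(y_i)$ with $\eta'=\eta_1$, whereas you get $\eta_2\in\Ext$ from $(y_i)$ and finish with Proposition~\ref{limsup} on $(x_i)$; the preliminary reduction via Proposition~\ref{aff1} and your first application of the corollary (yielding $\eta_1\in\Ext$) are superfluous.
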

\begin{proof}
Since $$\limsup_{i\to\infty}(\eta_2, x_i)|x_i|_1 >0,$$
it follows from Corollary~\ref{convexCom} that 
$$\limsup_{i\to\infty}(\eta_1, x_i)|x_i|_1 >0,$$
and $\eta_1\in\Ext$.

On the other hand, since $$\limsup_{i\to\infty}(\eta_1, y_i)|y_i|_1 >0,$$ and $\eta_1\in\Ext$, it follows from Proposition~\ref{limsup} 
that $\eta_2=\eta_1$.
\end{proof}


\bibliographystyle{amsplain}

\providecommand{\bysame}{\leavevmode\hbox to3em{\hrulefill}\thinspace}
\providecommand{\MR}{\relax\ifhmode\unskip\space\fi MR }
\providecommand{\MRhref}[2]{%
  \href{http://www.ams.org/mathscinet-getitem?mr=#1}{#2}
}
\providecommand{\href}[2]{#2}

\end{document}